%% file: main.tex
\documentclass[12pt]{amsart}
\usepackage{amsmath}
\usepackage{amssymb}
\usepackage{verbatim}
\usepackage{graphicx}
\usepackage{color}
\usepackage{dsfont}
\usepackage{hyperref}
\usepackage{environ}
\hypersetup{colorlinks=true, linkcolor=blue, citecolor=magenta}
\usepackage{url}
\usepackage{fancyhdr}
\usepackage{tikz-cd}
\usepackage{tikz}
\usetikzlibrary{shapes.geometric,arrows.meta}
\tikzcdset{arrow style=tikz}
\usepackage{calc}
\usepackage{stmaryrd}
\usepackage{quiver}

\NewEnviron{tikzineqn}[1][]{\vcenter{\hbox{\begin{tikzpicture}[#1]
            \BODY \end{tikzpicture}}}}

\usepackage[new]{old-arrows}  
\input{TestStyle.tikzstyles} 
\input{}
\tikzstyle{none}=[]
\pgfdeclarelayer{nodelayer}
\pgfdeclarelayer{edgelayer}
\pgfsetlayers{edgelayer,main,nodelayer}

\usepackage{array}
\usepackage{enumerate}
\usepackage{xfrac}
\usepackage{mathrsfs}   

\newcommand{\1}{\mathds{1}}
\newcommand{\bt}{\mathop{\boxtimes}\limits}
\newcommand{\dash}{\text{-}}
\newcommand{\id}{\mathrm{id}}
\newcommand{\Char}{\mathrm{char}}
\newcommand{\Id}{\mathrm{Id}}
\newcommand{\im}{\mathrm{im}}

\newcommand{\Hom}{\mathrm{Hom}}
\newcommand{\Fix}{\mathrm{Fix}}
\newcommand{\hofib}{\mathrm{hofib}}
\newcommand{\Inv}{\mathrm{Inv}}
\newcommand{\Irr}{\mathrm{Irr}}
\newcommand{\Mod}{\mathrm{Mod}}
\newcommand{\Bim}{\mathrm{Bim}}
\newcommand{\Mor}{\mathrm{Mor}}
\newcommand{\M}{\mathrm{Mor}_{2}}
\newcommand{\Rex}{\mathrm{Rex}}
\newcommand{\Fun}{\mathrm{Fun}}
\newcommand{\Triv}{\mathrm{Triv}}
\newcommand{\Glor}{\mathrm{Glor}}
\newcommand{\FPdim}{\mathrm{FPdim}}
\newcommand{\Gal}{\mathrm{Gal}}
\renewcommand{\Mor}{\mathrm{Mor}}
\newcommand{\Alg}{\mathrm{Alg}}
\newcommand{\Forg}{\mathrm{Forg}}

\newcommand{\aut}{\mathrm{Aut}}
\newcommand{\Aut}{\mathcal Aut}
\newcommand{\Out}{\mathcal Out}
\newcommand{\out}{\mathrm Out}
\newcommand{\Br}{\mathrm{Br}}
\newcommand{\brpic}{\mathrm{BrPic}}
\newcommand{\BrPic}{\mathcal Br\mathcal Pic}

\newcommand{\Tens}{\mathbf{Tens}}

\newcommand{\BrTens}{\mathbf{BrTens}}
\newcommand{\Pic}{\mathcal Pic}

\newcommand{\End}{\mathrm{End}}

\renewcommand{\Vec}{\mathrm{Vec}}
\newcommand{\Rep}{\mathrm{Rep}}

\renewcommand{\lim}{\mathop{\mathrm{lim}}}

\makeatletter
\g@addto@macro\th@plain{\thm@headpunct{}}
\g@addto@macro\th@definition{\thm@headpunct{}}
\g@addto@macro\th@remark{\thm@headpunct{}}
\makeatother

\makeatletter
\newtheorem*{rep@theorem}{\rep@title}
\newcommand{\newreptheorem}[2]{%
\newenvironment{rep#1}[1]{%
 \def\rep@title{#2 \ref{##1}}%
 \begin{rep@theorem}}%
 {\end{rep@theorem}}}
\makeatother

\newtheorem{theorem}{Theorem}[section]
\newreptheorem{theorem}{Theorem}
\newtheorem{proposition}[theorem]{Proposition}

\newtheorem{corollary}[theorem]{Corollary}
\newreptheorem{corollary}{Corollary}
\newtheorem{lemma}[theorem]{Lemma}

\theoremstyle{definition}
\newtheorem{definition}[theorem]{Definition}

\newtheorem{example}[theorem]{Example}

\newtheorem{remark}[theorem]{Remark}
\newtheorem{question}[theorem]{Question}

\newtheorem{conjecture}[theorem]{Conjecture}

\tikzset{Rightarrow/.style={double equal sign distance,>={Implies},->},
triple/.style={-,preaction={draw,Rightarrow}},
quadruple/.style={preaction={draw,Rightarrow,shorten >=0pt},shorten >=1pt,-,double,double
distance=0.2pt}}

\usepackage[foot]{amsaddr}
\usepackage{orcidlink}

\author[S. Sanford]{Sean Sanford\, \orcidlink{0000-0002-2439-3764}}
\address{School of Mathematics, The University of Edinburgh, Edinburgh, UK EH9 3FD}
\email{ssanford@ed.ac.uk}

\title{Putting the Brauer back in Brauer-Picard}
\date{}

\begin{document}

\begin{abstract}
    We establish a 6-term left exact sequence, involving Galois cohomology of the base field $\mathbb K$, and the Brauer-Picard groupoid of a fusion category.  This generalizes a result of Etingof, Nikshych, and Ostrik to the setting where $\mathbb K$ is not algebraically closed.
    Following their example, we use this exact sequence to compute examples of graded extensions of fusion categories over $\mathbb R$.
    Along the way, we establish several structural theorems regarding the duality morphisms for a fusion category as an object in the 4-category of braided tensor categories.
    The paper ends with a speculative look at a potential higher categorical explanation of the main result.
\end{abstract}

\maketitle
\tableofcontents

\section{Introduction}

In \cite{MR2677836}, Etingof, Nikshych, and Ostrik describe a map $\Phi$, from the group of invertible bimodules $\brpic(\mathcal C)$ for a fusion category $C$, to $\aut_{br}(\mathcal Z(\mathcal C))$, the group of braided autoequivalences of the Drinfeld center $\mathcal Z(\mathcal C)$.
Their work shows that this map $\Phi$ is an isomorphism, but their result is stronger than this.
These groups are just truncations of categorical groups: monoidal categories whose objects ($\pi_0$), and morphisms ($\pi_1$) have interacting group structures.
Their theorem \cite[Thm. 1.1]{MR2677836} actually establishes that this $\Phi$ comes from an equivalence of these categorical groups.
This equivalence is a valuable tool when trying to compute extensions of fusion categories by groups, which is a necessary component of a gauging procedure motivated by the connections between fusion categories and topological phases of matter.

... but what \emph{really} happened was that $\brpic(\mathcal C)$ was just the $\pi_0$ of a 2-categorical group $\BrPic(\mathcal C)$, and this had to first be truncated to $\pi_{\leq1}$ in order for $\Phi$ to be an equivalence.
The group $\pi_2\BrPic(\mathcal C)$ is canonically isomorphic to $\mathbb K^\times$, the group of units in the base field, so not much data was lost in removing this top level.
However, this issue of truncation hinted at the fact that this canonical map $\Phi$, which really is a quite general construction, should not be expected to be an equivalence in full generality.

In this paper, we will show that by simply changing the base field, $\pi_0\Phi$ can fail to be injective.
In order to investigate the resulting kernel, the most natural thing to do, from the homotopy theorist's perspective, is to take the homotopy fiber of $\Phi$.
We accomplish this, and manage to identify $\hofib(\Phi)$ in the following way.

\begin{reptheorem}{thm:fiber sequence}
    Suppose that $\mathcal C$ is a 2-separable (see Definition \ref{def:what it means to be separable fusion}) fusion category over $\mathbb K$ and $\Omega\mathcal Z\mathcal C:=\End_{\mathcal Z(\mathcal C)}(\1)=\mathbb K$.  There is a homotopy fiber sequence
    \[\BrPic(\Vec_{\mathbb K})\to\BrPic(\mathcal C)\xrightarrow{\Phi}\Aut_{br}\big(\mathcal Z(\mathcal C)\big)\,.\]
\end{reptheorem}
A similar result was announced in \cite[Thm. 6.3.7]{sanfordThesis}, and the theorem above amounts to a significant reduction in complexity of the hypotheses.

The 2-categorical group $\BrPic(\Vec_{\mathbb K})$ has $\pi_0\cong\Br(\mathbb K)\cong H^2(\mathbb K;\mathbb G_m)$.
This connection with the Brauer group was discussed in \cite[Prop. 4.9]{MR2677836} as a motivation for the name Brauer-Picard. Since Etingof, Nikshych, and Ostrik were working over an algebraically closed field $\mathbb K$ for the rest of the paper, this particular higher group was trivial at $\pi_0$, which meant that the kernel coming from the homotopy fiber was absent.

The main consequence of Theorem \ref{thm:fiber sequence} is the existence of a corresponding long exact sequence of homotopy groups.
With a little extra work, we are able to extend this sequence one additional term to the right by defining a map $\mathcal T$ (Definition \ref{def:the map T}), from $\aut_{br}(\mathcal Z(\mathcal C))$ to the group of invertible fusion categories up to Morita equivalence.
The group of such invertible Morita classes was identified in \cite[Thm. 5.9]{sanford2024invertiblefusioncategories} as being isomorphic to $H^3(\mathbb K;\mathbb G_m)$, which is often, but not always trivial (see Remark \ref{rem:good and bad fields}).
In particular, $H^3(\mathbb K;\mathbb G_m)=1$ whenever $\mathbb K=\overline{\mathbb K}$, so $\Vec_{\mathbb K}$ is the only invertible fusion category over such a field.
In this setting, the theorem below recovers the surjectivity of $\pi_0\Phi$ found in \cite[Thm. 1.1]{MR2677836}.

\begin{reptheorem}{thm:what's the cokernel?}
    Suppose that $\mathcal C$ is 2-separable fusion over $\mathbb K$, and $\Omega\mathcal Z(\mathcal C)=\mathbb K$.
    The long exact sequence of homotopy groups associated to the fibration of Theorem \ref{thm:fiber sequence} can be extended by appending $\mathcal T$ (shown below), though $\mathcal T$ itself is not necessarily surjective.
    \[
    \begin{tikzpicture}[x=3cm,y=1.2cm]
        \node (A) at (0,2) {$\mathbb K^\times$};
        \node (B) at (1,2) {$\mathbb K^\times$};
        \node (C) at (2,2) {$0$};
        \node (D) at (0,1) {$0$};
        \node (E) at (1,1) {$\mathrm{Inv}\big(\mathcal Z(\mathcal C)\big)$};
        \node (F) at (2,1) {$\mathrm{Aut}_{\otimes}(\mathrm{Id}_{\mathcal Z(\mathcal C)})$};
        \node (G) at (0,0) {$\mathrm{Br}(\mathbb K)$};
        \node (H) at (1,0) {$\mathrm{BrPic}(\mathcal C)$};
        \node (I) at (2,0) {$\mathrm{Aut}_{br}\big(\mathcal Z(\mathcal C)\big)$};
        \node (J) at (0,-1) {$H^3(\mathbb K;\mathbb G_m)$};
        \draw[->, rounded corners]
        (A) edge (B)
        (B) edge (C)
        (C) -- ++(0.6,0) -- ++(0,-.5) -- ++(-3.1,0) -- ++(0,-.5) -- ++(.1,0) edge (D)
        (D) edge (E)
        (E) edge  (F)
        (F) -- ++(0.6,0) -- ++(0,-.4) -- ++(-3.1,0) -- ++(0,-.6) -- ++(.1,0) edge (G)
        (G) edge (H)
        (H) -- node[above]{$\pi_0\Phi$} (I);
        \draw[->, rounded corners, orange] (I) -- ++(0.6,0) -- ++(0,-.5) -- node[below]{$\mathcal T$} ++(-3.1,0) -- ++(0,-.5) -- (J);
    \end{tikzpicture}
    \]
\end{reptheorem}

Note that $H^0(\mathbb K;\mathbb G_m)=\mathbb K^\times$, and $H^1(\mathbb K;\mathbb G_m)=0$ by Hilbert's Theorem 90.
This fiber sequence thus connects Galois cohomology and fusion categories over general fields.

Let us describe this map $\mathcal T$.
For a braided equivalence $F:\mathcal Z(\mathcal C)\to\mathcal Z(\mathcal C)$, we can construct a multifusion category
\[\mathcal T_F\;:=\;\mathcal C^{mp}\bt_{\mathcal Z(\mathcal C)}\mathcal Z(\mathcal C)_F\bt_{\mathcal Z(\mathcal C)}\mathcal C\,\]
which could be interpreted as `conjugation of $F$ by $\mathcal C$'.
Theorem \ref{thm:main invertibility theorem} (see below) guarantees that this category $\mathcal T_F$ is an invertible multifusion category, and thus represents some Morita class of invertible fusion categories.

This long (left) exact sequence is the primary tool for computing group extensions of fusion categories when the base field $\mathbb K$ is not algebraically closed.
In Example \ref{eg:Q+}, we describe a fusion category over $\mathbb R$, where this extra term $\Br(\mathbb R)\cong\mathbb Z/2\mathbb Z$ contributes to $\brpic(\mathcal C)$, and in Example \ref{eg:Q-} we describe a similar category where this term contributes to $\aut_{\otimes}(\Id_{\mathcal Z(\mathcal C)})$ instead.
These examples show that the 5-term exact sequence of Theorem \ref{cor:5-term LES} cannot be split into smaller sequences generically.

While building up to the proofs of these results, we establish various structural results that are interesting in their own right.
The careful reader may have already noticed the prevalence of the condition $\Omega\mathcal Z\mathcal C=\mathbb K$ in each of these theorems.
The significance of the unit in the Drinfeld center being split stems from the following computation.

\begin{reptheorem}{lem:main faithfulness lemma}
    For any 2-separable fusion category $\mathcal C$ over $\mathbb K$, the action of $\mathcal C\bt\mathcal C^{mp}$ on $\mathcal C$ is faithful if and only if $\Omega\mathcal Z(\mathcal C)=\mathbb K$.
\end{reptheorem}

This lemma relies on an interesting Galois theory result of Kuperberg \cite{kuperbergFiniteConnectedSemisimple2002} in the context of rigid bicategories.

In Section \ref{sec:The 4-cat Mor2}, we describe the 4-category $\M$ of Braided monoidal $\mathbb K$ linear categories that provides a unified language for our arguments.
Much work has gone into analyzing duality \cite{MR4228258,gwilliam2018dualsadjointshighermorita} and invertibility \cite{MR4302495} in this 4-category, and even defining $\M$ rigorously required a great deal of effort (see \cite{Scheimbauer2014FactorizationHA}, \cite{haugsengTheHigherMoritaCategory}, \cite{MR3590516}, \cite{dissertation}).
Using this 4-categorical language, we establish the following.

\begin{reptheorem}{thm:main invertibility theorem}
    For $\mathcal C$ a 2-separable fusion category over $\mathbb K$, $\mathcal C$ is invertible as a 1-morphism $\mathcal Z(\mathcal C)\to\Vec_{\mathbb K}$ in $\M$ if and only if $\Omega\mathcal Z(\mathcal C)=\mathbb K$.
\end{reptheorem}

This result is the main technical heart of the paper, and the proof Theorem \ref{thm:fiber sequence} relies on this invertibility in a critical way.
Since the composition of invertible morphisms is again invertible, the above result also explains why the map $\mathcal T$ produces invertible fusion categories.

Finally in Section \ref{sec:Conjectural extension}, we describe a conjecture of Corey Jones and David Reutter about a 2-categorical version of the fiber sequence established in \cite[Thm. 3.4]{MR3354332}.
Assuming this conjecture, we explain how our fiber sequence, including the extension by $\mathcal T$, can be recovered as a special case of theirs.

\vspace{4mm}

In addition to the author's algebraic interests, a secondary motivation for this work comes from the relation between fusion categories over the real numbers, and time reversal symmetry in topological quantum field theory.
Fusion categories are algebraic objects that describe systems of particles in a topological order.
When a fusion category is equipped with a braiding, it can describe (quasi-)particles in (2+1)D spacetime, and without the braiding it only describes particles in (1+1)D.
From the algebraic perspective, the fusion category is agnostic as to whether the objects are fundamental particles, or emergent quasi-particles, and thus they are valuable in condensed matter physics, where emergent quasi-particles often arise as localized excitations.

It is generally agreed that the operation of time reversal acts by some antiunitary operator.
If a system of (quasi-)particles is time reversal symmetric, then the antiunitarity should be represented in the fusion category as an autoequivalence that behaves as complex conjugation on scalars.
The collection of time reversal invariant (possibly composite) particles should then form a fusion category over the reals, which can be computed as the equivariantization with respect to the $\Gal(\mathbb C/\mathbb R)$ action, as described in \cite{etingofDescentAndForms}.
The results of this paper have implications for the analysis of time reversal symmetry performed in \cite{PhysRevB.93.235161}, and we intend to expand on these ideas in a following paper.

\subsection{Acknowledgments}
Thank you to Thibault Décoppet, Theo \linebreak Johnson-Freyd, and Corey Jones for carefully explaining their results to me.
Thanks to Julia Plavnik and to David Penneys for their advice, and to David Jordan for providing edits on an early draft, and for coming up with the catchy title.
The parts of this paper relating to invertibility in $\M$ were originally imagined as a follow-up paper to \cite{sanford2024invertiblefusioncategories}, and I want to thank Noah Snyder for giving me his blessing to write this up on my own.
A weaker version of Theorem \ref{thm:fiber sequence} first appeared in my PhD thesis \cite{sanfordThesis}, and I owe special thanks to Julia Plavnik for often reminding me that this paper should be written.

\section*{Funding sources}
    The research for this paper was partially funded by many different sources over many years, including National Science Foundation Grants No. DMS-2000093 and DMS-2154389, Simons Foundation award 888988 as part of the Simons Collaboration on Global Categorical Symmetries, and Engineering and Physical Sciences Research Council Grants No. 13462195\_13462197, and 9424181\_9424203.

\section{Preliminaries}

We will fix an arbitrary field $\mathbb K$, and this will be the base over which our 1- and 2-categories are constructed.

\begin{remark}
    We focus specifically on the finite dimensional setting.
    For ease of readability, we use notation that suppresses this assumption.
\end{remark}

The notation $\Vec_{\mathbb K}$ will refer to the category of finite dimensional vector spaces over $\mathbb K$.
More generally, for an algebra $A$ in $\Vec_{\mathbb K}$, $\Mod(A)$ will refer to the category of finite dimensional $A$ modules.

\begin{definition}\label{def:Separable algebra in a 1-cat}
    An algebra object $A$ in a monoidal category is said to be separable if its multiplication map $\mu:A\otimes A\to A$ admits a section $m^*:A\to A\otimes A$ as a map of $A$-$A$ bimodules.
\end{definition}

\begin{definition}
    A category $\mathcal C$ will be called 1-separable (over $\mathbb K$) if $\mathcal C\simeq\Mod(A)$, for some separable algebra $A$ in $\Vec_{\mathbb K}$.
\end{definition}

The tensor product of separable algebras is always a separable algebra, but the tensor product of semisimple algebras can fail to be semisimple, as the following example shows.
\begin{example}\label{eg:InseparableSemisimple}
    If $\mathbb L$ is an inseparable field extension over $\mathbb K$, then $\mathbb L$ is a semisimple algebra in $\Vec_{\mathbb K}$, but $\mathbb L\otimes_{\mathbb K}\mathbb L$ is nonsemisimple.
\end{example}
In order to avoid these problems, authors often work over perfect fields, which makes 1-separability equivalent to semisimplicity.

The paper \cite{gaiotto2019condensationshighercategories} proposes an inductive definition of separable algebra in all categorical dimensions, as well has a higher analogue of idempotent- (also called Karoubi-) completion.
A thorough treatment of idempotent completion for linear 2-categories can be found in \cite[Sec. 1.3]{douglasFUSION2CATEGORIESSTATESUM}.
These higher notions of separability once again behave well under tensor products.

With these benefits in mind, and in order to effectively argue about higher linear categories, we choose to abandon semisimplicity in favor of separability.
This leads to the following definition.

\begin{definition}[{cf. \cite[Ex. 1.5.2]{decoppetRigidAndSeparable}}\footnote{Décoppet uses the word `perfect' for what we are calling 1-separable.}]
    The notation $2\Vec_{\mathbb K}$ will refer to the 2-category of categories that are 1-separable over $\mathbb K$, together with linear functors and natural transformations.
\end{definition}

There is a larger 2-category $\Rex_{\mathbb K}$, consisting of all finitely cocomplete $\mathbb K$-linear categories, and colimit preserving (aka right-exact, hence Rex) functors.
There is an embedding $2\Vec_{\mathbb K}\hookrightarrow\Rex_{\mathbb K}$ that is an equivalence on hom categories.
These 2-categories admit a symmetric monoidal structure known as the Deligne-Kelly tensor product $\bt:=\boxtimes_{\mathbb K}$.
There are many different settings in which this product makes sense (see e.g. \cite{lopezFrancoTensorProducts}), and we have chosen our 2-categories so that they are well-behaved with respect to this operation.

\begin{definition}[universal version]\label{def:UniversalDeligne}
    The Deligne-Kelly tensor product $\mathcal C\bt\mathcal D$ of two finitely cocomplete $\mathbb K$-linear categories $\mathcal C$ and $\mathcal D$ is a finitely cocomplete $\mathbb K$-linear category together with a bilinear functor $\mathcal C\times\mathcal D\to\mathcal C\bt\mathcal D$ that induces an equivalence
    \[\Rex_{\mathbb K}(\mathcal C\bt\mathcal D,\mathcal E)\simeq\Rex^{bil}_{\mathbb K}(\mathcal C,\mathcal D\,;\mathcal E)\,,\]
    where $\Rex^{bil}_{\mathbb K}(\mathcal C,\mathcal D\,;\mathcal E)$ is the category of $\mathbb K$-bilinear functors.
\end{definition}

\begin{example}[Eilenberg-Watts]\label{eg:Eilenberg-Watts}
    If $\mathcal C\simeq\Mod(A)$, and $\mathcal D\simeq\Mod(B)$ for two finite dimensional algebras over $\mathbb K$, then $\mathcal C\bt\mathcal D\simeq\Mod(A\otimes B)$.
    In particular, the Deligne tensor product of two 1-separable categories is again 1-separable.
\end{example}

When representing algebras are given, the above example serves as a convenient computational tool.
The following construction is better suited to computing the Deligne-Kelly tensor product when we know the objects and morphisms.

\begin{definition}[constructive version]\label{def:ConstructiveDeligne}
    The naïve tensor product $\mathcal C\otimes\mathcal D$ of two $\mathbb K$-linear categories $\mathcal C$ and $\mathcal D$ is the category whose objects are pairs $(C,D)$, with $C\in\mathcal C$ and $D\in\mathcal D$, and whose morphisms are determined by the rule
    \[\mathcal C\otimes\mathcal D\big((C,D)\,,\,(C',D')\big):=\mathcal C(C,C')\otimes_{\mathbb K}\mathcal D(D,D')\,.\]
    The Deligne-Kelly product $\mathcal C\bt\mathcal D$ is the completion of $\mathcal C\otimes\mathcal D$ under finite colimits.
    For a given pair $(C,D)$ its image in $\mathcal C\bt\mathcal D$ under the canonical functor $\mathcal C\times\mathcal D\to\mathcal C\bt\mathcal D$ will be called a simple tensor and be denoted by $C\bt D$.
    If $p\in\End(C\bt D)$ is a projection, then we will write $C\boxtimes^pD$ for the image $\im(p)$.
\end{definition}

\begin{example}
    For $\mathbb H$ the quaternion algebra over $\mathbb R$, the projection $p=\tfrac14(1\otimes1-i\otimes i-j\otimes j-k\otimes k)$ determines a simple object $\mathbb H\boxtimes^p\mathbb H$ inside
    \[\Mod(\mathbb H)\boxtimes_{\mathbb R}\Mod(\mathbb H)\simeq\Mod(\mathbb H\otimes_{\mathbb R}\mathbb H)\simeq\Mod\big(M_4(\mathbb R)\big)\simeq\Vec_{\mathbb R}\,,\]
    and all the other simple projections give simple objects that are isomorphic to this one.
\end{example}

We now turn to the topic of monoidal structures on $\mathbb K$-linear categories.
The collection of all linear monoidal categories is very large, and so, in order to prove stronger results, we will restrict our attention to fusion categories.
Whereas fusion categories are customarily defined over algebraically closed fields, the main objects in this paper are fusion categories over non-algebraically closed fields. Hence, the definition of fusion must be adapted, and we follow the conventions of \cite{MR4806973}.

\begin{definition}
    A multifusion category over $\mathbb K$ is a 1-separable (over $\mathbb K$) rigid monoidal category, where the tensor product of morphisms is required to be bilinear over $\mathbb K$. A multifusion category is called fusion if the monoidal unit is a simple object.
\end{definition}

\begin{example}\label{eg:Bim(C)}
    The complex numbers $\mathbb C$ are a separable algebra in $\Vec_{\mathbb R}$, and the category $\Bim_{\Vec_{\mathbb R}}(\mathbb C)$ of bimodules for this algebra admits a monoidal product in the form of $\otimes_{\mathbb C}$.
    This monoidal structure makes the category fusion over $\mathbb R$.
    
    However, this category is not fusion over $\mathbb C$, because the tensor product is not bilinear over $\mathbb C$.
    To see this, consider the simple bimodule $\mathbb C_\sigma$, whose underlying object is $\mathbb C$, and where the left and right actions of $\mathbb C$ differ by complex conjugation.
    Left multiplication by complex scalars defines an isomorphism $\End(\mathbb C_\sigma)\cong\mathbb C$, but then it follows that $(z\cdot\id_{\mathbb C_\sigma})\otimes_{\mathbb C}(w\cdot\id_{\mathbb C_\sigma})=z\overline{w}\cdot(\id_{\mathbb C_{\sigma}}\otimes_{\mathbb C}\id_{\mathbb C_{\sigma}})$.
\end{example}

The requirement that the tensor product be bilinear over $\mathbb K$ is equivalent to the statement that $\otimes$ descends to a well-defined functor $\mathcal C\bt\mathcal C\to\mathcal C$, and this is what allows such categories to be interpreted as algebra objects in $2\Vec_{\mathbb K}$.
We refer the reader to \cite{decoppetRigidAndSeparable} for the definition and basics of algebra objects in linear 2-categories.

\begin{definition}[see {\cite[Def.s 2.1.1 and 2.1.7]{decoppetRigidAndSeparable}}]\label{def:Rigid&Separable}
    An algebra object $A$ in $2\Vec_{\mathbb K}$ is said to be rigid if its multiplication $\otimes:A\bt A\to A$ admits a right adjoint $I:A\to A\bt A$ as $A$-$A$ bimodule 1-morphisms.
    If the counit for the adjunction $\otimes\dashv I$ admits a section as $A$-$A$ bimodule 2-morphisms, then $A$ is said to be separable.
\end{definition}

This separability for an algebra object in a 2-category is a categorification of Definition \ref{def:Separable algebra in a 1-cat}.
In a 2-category, there are two layers of adjoints to consider: having the first adjoint means it is rigid, and having both means it is separable.

It is well-known (see, e.g. \cite[Def-Prop 1.3]{MR4228258}) that an algebra $A$ in $2\Vec_{\mathbb K}$ is rigid in the above sense if and only if $A$ is a rigid monoidal category.
In this way, the following proposition could alternatively be used as the definition of multifusion categories.

\begin{proposition}
    A multifusion category over $\mathbb K$ is the same thing as a rigid algebra object in $2\Vec_{\mathbb K}$.
\end{proposition}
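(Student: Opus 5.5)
The plan is to unwind both sides of the claimed equivalence and match them piece by piece. The main input is the cited equivalence (\cite[Def-Prop 1.3]{MR4228258}): an algebra in $2\Vec_{\mathbb K}$ is rigid in the sense of Definition \ref{def:Rigid&Separable} if and only if its underlying monoidal category is rigid.

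First, I would check that the data agree before imposing any rigidity.

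\begin{itemize}
\item An algebra object $A$ in $2\Vec_{\mathbb K}$ consists of an object of $2\Vec_{\mathbb K}$, a multiplication 1-morphism $\otimes:A\bt A\to A$, a unit $\Vec_{\mathbb K}\to A$, and associator and unitor 2-isomorphisms satisfying pentagon and triangle coherence.
\item An object of $2\Vec_{\mathbb K}$ is by definition a 1-separable category.
\item By the universal property in Definition \ref{def:UniversalDeligne}, a right exact functor $A\bt A\to A$ is the same as a $\mathbb K$-bilinear functor $A\times A\to A$ that is right exact in each variable.
\item Because $A\simeq\Mod(R)$ with $R$ separable, $A$ is semisimple and every linear functor out of it is automatically exact. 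So this right-exactness condition costs nothing.
\item A unit 1-morphism $\Vec_{\mathbb K}\to A$ is the same as a choice of object $\1\in A$.
\item The coherence 2-isomorphisms are precisely the associativity and unit constraints.
\end{itemize}

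Hence algebra objects in $2\Vec_{\mathbb K}$ are exactly 1-separable monoidal categories whose tensor product is $\mathbb K$-bilinear. This is the point the paper already flagged just before the statement: bilinearity is equivalent to $\otimes$ descending to $\mathcal C\bt\mathcal C$. One small point to verify is that the equivalence of hom-categories between $2\Vec_{\mathbb K}$ and $\Rex_{\mathbb K}$ lets us compute $\bt$ inside $\Rex_{\mathbb K}$. Example \ref{eg:Eilenberg-Watts} guarantees the product stays 1-separable, so it does.

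Second, I would impose rigidity on both sides. I invoke \cite[Def-Prop 1.3]{MR4228258}, which the paper quotes as well known, for the statement that rigidity of the algebra $A$ is equivalent to rigidity of the underlying monoidal category. Combining this with the first step gives the identification: multifusion categories over $\mathbb K$, meaning 1-separable rigid monoidal categories with bilinear tensor product, are exactly the rigid algebras in $2\Vec_{\mathbb K}$.

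If a self-contained argument is wanted, the direction "rigid monoidal $\Rightarrow$ rigid algebra" goes as follows.

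\begin{itemize}
\item Define $I(a)$ to be the object representing the functor $\Hom(\,-\otimes\,-\,,a)$ on $A\bt A$. By semisimplicity this is $\bigoplus_i X_i^{*}\bt'(X_i\otimes a)$, summed over simple $X_i$ and suitably normalized by endomorphism algebras. Equivalently, it is the image of the coevaluation-type object under the action.
\item The bimodule structure on $I$ comes from the fact that duals give the projection formula $\Hom(x\otimes y,a)\cong\Hom(y,x^*\otimes a)$, which is natural in the bimodule actions.
\item Conversely, given a bimodule right adjoint $I$, one extracts duals from $I(\1)$ together with the unit and counit, via the standard argument that bimodule-ness of $I$ forces the lax structure maps to be invertible.
\end{itemize}

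I expect the main obstacle to be working over a non-algebraically-closed $\mathbb K$. There, simple objects have division-algebra endomorphisms, so the explicit formula for $I$ needs care. I would avoid the explicit formula by working entirely through representability and the Yoneda lemma, relying on 1-separability. Granting \cite{MR4228258}, however, the proof is just the matching of data in the first step.
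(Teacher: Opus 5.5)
Your proposal is correct and takes the same approach as the paper. The paper notes that $\mathbb K$-bilinearity of $\otimes$ is exactly what lets it descend to a functor $\mathcal C\bt\mathcal C\to\mathcal C$, making a 1-separable monoidal category an algebra in $2\Vec_{\mathbb K}$, and then cites \cite[Def-Prop 1.3]{MR4228258} for the equivalence between rigidity of the algebra and rigidity of the monoidal category; your optional self-contained sketch of that rigidity equivalence goes beyond what the paper supplies and is not needed.
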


The next issue we encounter when working over arbitrary fields is that the Drinfeld center of a fusion category is not necessarily fusion.

\begin{definition}\label{def:DrinfeldCenter}
    The Drinfeld center $\mathcal Z(\mathcal C)$ is $\Fun_{\mathcal C\text{-}\mathcal C}(\mathcal C,\mathcal C)$, the category of $\mathcal C$-$\mathcal C$ bimodule endofunctors of $\mathcal C$ and bimodule natural transformations.
\end{definition}

The objects in $\mathcal Z(\mathcal C)$ are functors that are equipped with bimodule structure maps (which we do not recall in detail here, see \cite[Def. 7.2.1 and Ex. 7.4.3]{MR3242743}).
The underlying functor $F$ is completely determined by simply evaluating at one: $F\mapsto F(\1)$, and this defines a monoidal forgetful functor $\Forg:\mathcal Z(\mathcal C)\to\mathcal C$.
The bimodule structure maps can be used to construct the following isomorphism
\[\beta^{F(\1)}_X:\;F(\1)\otimes X\cong F(\1\otimes X)\cong F(X)\cong F(X\otimes\1)\cong X\otimes F(\1)\,,\]
that is referred to as the half-braiding (over $X$) associated to the object $F(\1)$.
This collection is natural in $X$, and satisfies a further coherence axiom that follows from the bimodule coherence axioms.

If we set $Z=F(\1)$, then the pair $(Z,\beta^Z)$ defines a bimodule functor $Z\otimes(-)$, that is (bimodule) naturally isomorphic to $F$ itself.
In this way, we can think of objects in $\mathcal Z(\mathcal C)$ either as bimodule functors, or as objects equipped with half-braidings, and we will make liberal use of this equivalence.

\begin{example}\label{eg:NonSemisimpleCenters}
    Let $G$ be a finite group, and define $\Vec_{\mathbb K}(G)$ to be the category of finite dimensional $G$-graded vector spaces.
    When $\Char(\mathbb K)\nmid|G|$, the simple objects in $\mathcal Z(\Vec_{\mathbb K}(G))$ are classified by pairs $(C,V)$, where $C$ is a conjugacy class in $G$, and $V$ is an irreducible representation of the stabilizer of any element of $C$.
    When $\Char(\mathbb K)\mid|G|$, the description is more complicated.

    Regardless of the characteristic, it is always true that $\mathcal Z(\Vec_{\mathbb K}(G))$ contains $\Rep_{\mathbb K}(G)$ as a full subcategory.
    It follows that when $\Char(\mathbb K)\mid|G|$, $\mathcal Z(\mathcal C)$ is non-semisimple, despite the fact that $\Vec_{\mathbb K}(G)$ is 1-separable.
\end{example}

This issue is fixed by the following result:

\begin{proposition}[{cf. \cite[Cor. 3.1.7]{decoppetRigidAndSeparable}}]\label{prop:Separable<=>Z is LocSep}
    A multifusion category $\mathcal C$ is separable as a rigid algebra object in $2\Vec_{\mathbb K}$ (see Definition \ref{def:Rigid&Separable}), if and only if $\mathcal Z(\mathcal C)$ is 1-separable.
\end{proposition}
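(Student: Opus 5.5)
The plan is to translate both conditions into statements about the adjunction $\Forg\dashv R$, where $R:\mathcal C\to\mathcal Z(\mathcal C)$ is the right adjoint of the forgetful functor. I would then compare the two notions of separability through this monad.

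\emph{Step 1: identify the counit with a morphism in the center.} Under Definition \ref{def:DrinfeldCenter}, a $\mathcal C$-$\mathcal C$ bimodule endofunctor of $\mathcal C$ is an object of $\mathcal Z(\mathcal C)$, and a bimodule natural transformation between two such is a morphism of $\mathcal Z(\mathcal C)$. The composite $\otimes\circ I:\mathcal C\to\mathcal C$ is a bimodule endofunctor, and I would compute it explicitly. Using the rigid structure, $I(X)\cong\int^{Y} Y\boxtimes Y^*\otimes X$, so that
\[
\otimes\circ I(X)\;\cong\;\bigoplus_{Y} Y\otimes X\otimes Y^*\;\cong\;\Forg R(\1)\otimes X,
\]
where the sum runs over simples, suitably weighted by endomorphism algebras. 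Hence $\otimes\circ I$ corresponds to the object $R(\1)\in\mathcal Z(\mathcal C)$. Under this correspondence the counit of $\otimes\dashv I$ becomes the counit $\epsilon:R(\1)\to\1$ of $\Forg\dashv R$, viewed as a morphism in $\mathcal Z(\mathcal C)$. Consequently:
\begin{quote}
$\mathcal C$ is separable in the sense of Definition \ref{def:Rigid&Separable} if and only if $\epsilon$ admits a section in $\mathcal Z(\mathcal C)$.
\end{quote}
I expect this bookkeeping to be the most delicate part: one has to check that bimodule $2$-morphisms correspond exactly to half-braiding-compatible maps, and that the identification is compatible with the adjunction data.

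\emph{Step 2: $(\Leftarrow)$.} Suppose $\mathcal Z(\mathcal C)$ is 1-separable, so in particular semisimple. The map $\Forg(\epsilon)$ is split in $\mathcal C$, because $\1$ is the summand $Y=\1$ of $\Forg R(\1)$. Since $\Forg$ is faithful and exact, $\epsilon$ is therefore an epimorphism in $\mathcal Z(\mathcal C)$. In a semisimple abelian category every epimorphism splits, so $\epsilon$ has a section, and Step 1 shows that $\mathcal C$ is separable.

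\emph{Step 3: $(\Rightarrow)$.} Suppose a section $s:\1\to R(\1)$ of $\epsilon$ exists in $\mathcal Z(\mathcal C)$. By monadicity, $\mathcal Z(\mathcal C)$ is the category of modules over the monad $T=\Forg R\cong\bigoplus_Y Y\otimes(-)\otimes Y^*$ on $\mathcal C$, which is exact and faithful. I would use $s$, together with the half-braiding of $R(\1)$, to build a section of the multiplication $T^2\to T$ as $T$-bimodule transformations, so that $T$ is a separable monad. Next, write $\mathcal C\simeq\Mod(B)$ with $B$ separable over $\mathbb K$. Since $T$ is right exact, Eilenberg--Watts represents $T$ by a $B$-$B$ bimodule, and then $\mathcal Z(\mathcal C)\simeq\Mod(B')$ for a finite-dimensional algebra $B'$ containing $B$. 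Separability of $T$ translates into $B'$ being separable over $B$. Separability is transitive, so $B'$ is separable over $\mathbb K$, and $\mathcal Z(\mathcal C)$ is 1-separable.

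The main obstacle is this last direction, specifically promoting the single section $s$ into a bimodule splitting of the monad multiplication. I would first attempt the standard Frobenius-type formula, obtained by inserting $s$ in the middle of $T^2$ and using the half-braiding to move it past $X$. I would then verify the bimodule identities diagrammatically.
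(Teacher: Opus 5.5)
Your route is a legitimate alternative to the paper's. The paper simply imports D\'ecoppet's 2-categorical results: separability of $\mathcal C\bt\mathcal C^{mp}$ makes every hom-category of its module 2-category 1-separable, in particular $\mathcal Z(\mathcal C)=\End_{\mathcal C\bt\mathcal C^{mp}}(\mathcal C)$, and the converse is D\'ecoppet's Cor.~3.1.7. However, Step~1 uses the wrong adjoint.

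For $\otimes\dashv I$ one has the mate correspondence $\Hom(G\circ I,H)\cong\Hom(G,H\circ\otimes)$. Hence the bimodule functor $\otimes\circ I$ corepresents $Z\mapsto\Hom_{\mathcal C}(\1,\Forg Z)$. So it is $\mathrm{Ind}(\1)$ for the \emph{left} adjoint $\mathrm{Ind}\dashv\Forg$, and the counit of $\otimes\dashv I$ is the counit $\epsilon:\mathrm{Ind}(\1)\to\1$ of that adjunction. The counit of $\Forg\dashv R$ is only a morphism $\Forg R(\1)\to\1$ in $\mathcal C$, and it is in general not central.

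A concrete case is $\Vec_{\mathbb K}(G)$. There $R(\1)$ is $\mathrm{Fun}(G)$, concentrated in degree $e$, with the translation action. The counit of $\Forg\dashv R$ is $f\mapsto f(e)$, which is not $G$-invariant. The counit of $\otimes\dashv I$ is instead the augmentation $f\mapsto\sum_g f(g)$. Its central sections are the constants $c$ with $|G|c=1$, which exist iff $\Char\mathbb K\nmid|G|$, exactly as Example~\ref{eg:NonSemisimpleCenters} requires.

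For the same reason $\Forg R$ is a comonad; the monad whose modules form $\mathcal Z(\mathcal C)$ is $T=\Forg\circ\mathrm{Ind}$. With this correction Step~2 goes through. $\Forg(\epsilon)=\epsilon_\1$ is split by the triangle identity $\epsilon_\1\circ\otimes(\eta_{\1\bt\1})=\id_\1$ of $\otimes\dashv I$. Faithfulness of $\Forg$ then makes $\epsilon$ an epimorphism, and epimorphisms split in a semisimple category.

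The real gap is in Step~3. Turning the single section $s:\1\to\mathrm{Ind}(\1)$ into a separability structure on $T$ \emph{is} the forward direction, and you only propose to search for a formula. The missing idea is the projection formula $\mathrm{Ind}(\Forg Z)\cong Z\otimes\mathrm{Ind}(\1)$, natural in $Z$. It follows from rigidity of $\mathcal Z(\mathcal C)$ and strong monoidality of $\Forg$. Under it, the counit $\mathrm{Ind}\,\Forg Z\to Z$ becomes $\id_Z\otimes\epsilon$, so $\id_Z\otimes s$ is a natural section of the counit. By Rafael's criterion $\Forg$ is then a separable functor, i.e.\ $T$ is a separable monad. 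This is Brugui\`eres--Virelizier's Maschke theorem for Hopf monads, with $s$ a normalized integral.

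Equivalently, take the bimodule section $\xi$ of the counit $\otimes\circ I\Rightarrow\Id_{\mathcal C}$ and whisker it by $F\in\mathcal Z(\mathcal C)$. This exhibits $F$ as a natural retract of $F\circ\otimes\circ I\cong\mathrm{Ind}(\Forg F)$, which is precisely the retract argument of D\'ecoppet's Prop.~3.1.2 that the paper cites. With this in place, your Eilenberg--Watts step ($T\cong B'\otimes_B-$ with $B'$ separable over $B$) and transitivity of separability finish the proof.

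Compared with the paper, the corrected argument is self-contained. It uses only the splitting for $\mathcal C$ acting on itself, not separability of $\mathcal C\bt\mathcal C^{mp}$ or the general statement about all hom-categories of its module 2-category. It also makes the converse a two-line argument rather than a citation.
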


\begin{proof}
    The argument in \cite[Prop. 3.1.2]{decoppetRigidAndSeparable} when applied to $\mathfrak C=2\Vec_{\mathbb K}$ is stronger, because each of the categories $\Hom_{\mathfrak C}(C,N)$ are 1-separable.
    This shows that all the hom categories in $\Mod_{2\Vec_{\mathbb K}}(A)$ are 1-separable.
    The rest of the arguments \emph{loc. cit.} compile to show that, for a multifusion category $\mathcal C$, all the hom categories in $\Mod_{2\Vec_{\mathbb K}}(\mathcal C\bt\mathcal C^{mp})$ are 1-separable.
    In particular,
    \[\mathcal Z(\mathcal C)\;=\;Fun_{\mathcal C\dash\mathcal C}(\mathcal C,\mathcal C)\simeq\End_{\Mod_{2\Vec_{\mathbb K}}(\mathcal C\bt\mathcal C^{mp})}(\mathcal C)\]
    is one such hom category.
    
    The converse is immediate, seeing as it is a special case of the original version of \cite[Cor. 3.1.7]{decoppetRigidAndSeparable}.
\end{proof}

Note that $\mathcal Z(\mathcal C)$ being 1-separable is the same as saying that $\mathcal Z(\mathcal C)$ is an object in $2\Vec_{\mathbb K}$.
This is a property that we need for our later computations, so we pause a moment to clarify the terminology.

\begin{definition}\label{def:what it means to be separable fusion}
    A multifusion category $\mathcal C$ is 2-separable if it is separable as a rigid algebra in $2\Vec_{\mathbb K}$.
\end{definition}

\begin{remark}
    When considering a monoidal $\mathbb K$-linear category, 1-separability is a property of the underlying category, whereas 2-separability depends on the monoidal structure.
    Note that according to our definitions, multifusion categories are always 1-separable.
    Proposition \ref{prop:Separable<=>Z is LocSep} can be rephrased as saying: 
    \[\Bigg(\;\mathcal C \text{ is 2-separable}\;\Bigg)\;\;\iff\;\;\Bigg(\;\mathcal Z(\mathcal C) \text{ is 1-separable}\;\Bigg)\;.\]
    We hope that the clarity provided by this notational choice makes up for the aesthetic loss resulting from the proliferation of 1-s and 2-s.
\end{remark}

If $A$ is an algebra object internal $\mathcal C$, then we will write $\mathcal C_A$ for the category of right $A$ modules internal to $\mathcal C$, ${}_A\mathcal C$ for left modules, and ${}_A\mathcal C_A$ for bimodules.
By \cite[Prop. A.23]{MR4806973}, if $A$ is separable, then ${}_A\mathcal C$, $\mathcal C_A$, and ${}_A\mathcal C_A$ are 1-separable, and if ${}_A\mathcal C_A$ is 1-separable, then $A$ is separable.
More can be said when the ambient category $\mathcal C$ is also 2-separable.

\begin{theorem}\label{thm:LocSep&Sep=>Sep}
    Let $A$ be an algebra in a multifusion category $\mathcal C$.  If ${}_A\mathcal C$ (or $\mathcal C_A$) is 1-separable, and $\mathcal C$ is 2-separable, then $A$ is separable.
\end{theorem}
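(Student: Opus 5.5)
Since \cite[Prop. A.23]{MR4806973} shows that $A$ is separable as soon as ${}_A\mathcal C_A$ is 1-separable, the plan is to prove that ${}_A\mathcal C_A$ is 1-separable. We treat the case of left modules; right modules follow by passing to $\mathcal C^{mp}$, which is again 2-separable, since its Drinfeld center is $\mathcal Z(\mathcal C)$ with the reversed braiding and so is 1-separable.

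\textbf{Step 1: ${}_A\mathcal C$ is a separable module 2-category object.} The category ${}_A\mathcal C$ is a right $\mathcal C$-module category via $M\mapsto M\otimes X$. The right $\mathcal C$-action on ${}_A\mathcal C$ is rigid, because $\mathcal C$ is rigid and internal homs exist in ${}_A\mathcal C$. By hypothesis ${}_A\mathcal C$ is 1-separable, so it is an object of $\Mod_{2\Vec_{\mathbb K}}(\mathcal C^{mp})$, i.e. a $\mathcal C$-module in $2\Vec_{\mathbb K}$.

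\textbf{Step 2: use 2-separability of $\mathcal C$.} The proof of Proposition \ref{prop:Separable<=>Z is LocSep} shows that when $\mathcal C$ is 2-separable, every hom category in $\Mod_{2\Vec_{\mathbb K}}(\mathcal C)$ is 1-separable. This uses the argument of \cite[Prop. 3.1.2]{decoppetRigidAndSeparable} applied in $2\Vec_{\mathbb K}$. Concretely, $\Fun_{\mathcal C}(\mathcal M,\mathcal N)$ is exhibited as a retract, via the separability splitting of the counit $\otimes\dashv I$, of the category $\Fun_{\mathbb K}(\mathcal M,\mathcal N)$. The latter is 1-separable, since by Example \ref{eg:Eilenberg-Watts} it is $\mathcal M^{op}\bt\mathcal N$. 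Apply this with $\mathcal M=\mathcal N={}_A\mathcal C$ to conclude that $\End_{\mathcal C}({}_A\mathcal C)$ is 1-separable.

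\textbf{Step 3: identify the endomorphisms with bimodules.} By the Eilenberg–Watts-type equivalence internal to $\mathcal C$ (\cite[Prop. 7.11.1]{MR3242743}-style argument), right $\mathcal C$-module endofunctors of ${}_A\mathcal C$ correspond to $A$-$A$ bimodules in $\mathcal C$ via $M\mapsto M\otimes_A -$. Here $M\otimes_A N$ is a coequalizer, which exists because ${}_A\mathcal C$ is finitely cocomplete. The correspondence gives ${}_A\mathcal C_A^{(op)}\simeq\End_{\mathcal C}({}_A\mathcal C)$. So ${}_A\mathcal C_A$ is 1-separable, and \cite[Prop. A.23]{MR4806973} then gives that $A$ is separable.

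The main obstacle is Step 3 in the absence of separability of $A$. The identification of module functors with bimodules normally uses that ${}_A\mathcal C$ is generated by free modules $A\otimes X$. Since ${}_A\mathcal C$ need not be semisimple a priori, we must check that a right-exact module functor is determined by its value on $A$. This holds because every left $A$-module $M$ is the coequalizer of $A\otimes A\otimes M\rightrightarrows A\otimes M$. So it suffices to work with right exact module functors, which is consistent with the $\Rex$-framework of the paper.
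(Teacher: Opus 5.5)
Your proposal is correct, but it takes a different route from the paper. You reduce to 1-separability of ${}_A\mathcal C_A$, and you get this in two moves. First, internal Eilenberg--Watts identifies ${}_A\mathcal C_A$ with the hom category $\Fun_{\mathcal C}({}_A\mathcal C,{}_A\mathcal C)$ in $\Mod_{2\Vec_{\mathbb K}}(\mathcal C)$. Second, you invoke the D\'ecoppet-style separable-adjunction argument, already used in the proof of Proposition~\ref{prop:Separable<=>Z is LocSep}, that such hom categories are 1-separable when $\mathcal C$ is 2-separable. The last step is then only that $\mu\colon A\otimes A\to A$ is a bimodule epimorphism, hence split in the semisimple category ${}_A\mathcal C_A$.

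The paper never identifies a module-functor category. It pushes the right adjoint $I$ of $\otimes$ to a functor ${}_A\mathcal C_A\simeq{}_A\mathcal C\bt_{\mathcal C}\mathcal C_A\to{}_A\mathcal C\bt\mathcal C_A$ and decomposes $I(A)$ into summands of simple tensors $M_i\bt N_i$. Semisimplicity of ${}_A\mathcal C$ and $\mathcal C_A$ lets it split the action maps on the $M_i$ and $N_i$, giving a bimodule map $I(A)\to A\otimes I(A)\otimes A$. It then combines this with the section $\xi_A$ of the counit, with $\epsilon_A$, and with $\mu_A$ to write down an explicit bimodule section $\Delta_A$.

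Both proofs use the same two inputs: the splitting $\xi$ and semisimplicity of the one-sided module category. Yours is modular and yields the stronger intermediate fact that ${}_A\mathcal C_A$ is 1-separable directly, but it rests on the separable-monad lemma that the paper only cites. The paper's argument is self-contained and constructive.

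One small correction to your closing remark: ${}_A\mathcal C$ \emph{is} semisimple by hypothesis. So every linear functor out of it is exact, and every module $M$ is a direct summand of $A\otimes M$, because the action epimorphism splits in ${}_A\mathcal C$. The right-exactness worry therefore does not arise, though your bar-resolution argument would handle it anyway.
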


\begin{proof}
    The right adjoint $I:\mathcal C\to\mathcal C\bt\mathcal C$ to the tensor product determines a similar splitting
    \[{}_A\mathcal C_A\simeq{}_A\mathcal C\bt_{\mathcal C}\mathcal C_A\to{}_A\mathcal C\bt\mathcal C_A\,,\]
    which we will also denote by $I$, since it's the same functor on the underlying objects.
    The category $\mathcal C_A$ is canonically equivalent to $({}_A\mathcal C)^{op}$, and therefore both are 1-separable.
    It follows from Example \ref{eg:Eilenberg-Watts} that ${}_A\mathcal C\bt\mathcal C_A$ is also 1-separable.

    The object $I(A)$ in ${}_A\mathcal C\bt\mathcal C_A$ must admit a decomposition into simple objects, so by Definition \ref{def:ConstructiveDeligne} we can find an isomorphism
    \[I(A)\cong\bigoplus_iM_i\bt^{p_i}N_i\,,\]
    because 1-separability implies all simple objects will be summands of simple tensors.
    We can now find inclusions that split the projections $p_i$, and this allows us to view $I(A)$ as a summand of a sum of simple tensors.
    Further, semisimplicity of ${}_A\mathcal C$ implies that each $M_i$ is a summand of $A\otimes M_i$, and similarly for the $N_i$.
    This allows us to compose to find a map
    \begin{gather*}
        I(A)\hookrightarrow\bigoplus_iM_i\bt N_i\hookrightarrow\bigoplus_i(A\otimes M_i)\bt (N_i\otimes A)\cdots\\
        \cdots\twoheadrightarrow A\otimes\left(\bigoplus_iM_i\bt^{p_i}N_i\right)\otimes A\cong A\otimes I(A)\otimes A\,,
    \end{gather*}
    which is a map of $A$-$A$ bimodules.

    Let $\xi:\Id_{\mathcal C}\to\otimes\circ I$ be the $\mathcal C$ bimodule splitting of the counit $\epsilon:\otimes\circ I\to\Id_{\mathcal C}$.
    It follows that $\xi_A:A\to(\otimes\circ I)(A)$ is a map of $A$-$A$ bimodules.
    Using our previous composition, we can write down a map
    \begin{gather*}
        A\xrightarrow{\xi_A}(\otimes\circ I)(A)\to\otimes\big(A\otimes I(A)\otimes A\big)\cong A\otimes (\otimes\circ I)(A)\otimes A\cdots\\
        \cdots\xrightarrow{\id\otimes\epsilon_A\otimes \id}A\otimes A\otimes A\xrightarrow{\mu_A\otimes\id_A}A\otimes A\,.
    \end{gather*}
    This is our desired splitting $\Delta_A:A\to A\otimes A$.
    Our choice of the maps $M_i\to A\otimes M_i$ as splittings of the module actions, together with the fact that $\epsilon_A$ is a map of $A$-$A$ bimodules implies that $\mu_A\circ\Delta_A=\epsilon_A\circ\xi_A=\id_A$.
\end{proof}

If a monoidal $\mathbb K$-linear category has finite colimits, then we can form relative products of internal modules over internal algebras by taking the coequalizers as in the diagram below.
\[
    \begin{tikzcd}[column sep=15mm]
        M\otimes A\otimes N \ar[r, "\mu_M\otimes\id_N",shift left]\ar[r, "\id_M\otimes\mu_N"',shift right] & M\otimes N \ar[r, dotted] & M\otimes_AN\,.
    \end{tikzcd}
\]

There is also a relative version of the Deligne-Kelly tensor product, which has a universal property that categorifies the above coequalizer.

\begin{definition}\label{def:C-balFunctors&RelativeDeligne}
    Given an algebra object $\mathcal C$ and left and right $\mathcal C$ modules $\mathcal M$ and $\mathcal N$ in $\Rex_{\mathbb K}$, a $\mathcal C$-balanced functor $(F,\text{bal}^F):\mathcal M\times\mathcal N\to\mathcal E$ is a bilinear functor $F$, cocontinuous in each variable, together with an isomorphism
    \[
        \text{bal}^F_{M,C,N}:F\big(M\triangleleft C\;,\; N\big)\to F\big(M\;,\;C\triangleright N\big)\,,
    \]
    called a $\mathcal C$-balancing that is natural in $M$, $C$, and $N$, and subject to the coherence conditions
    \[
    \begin{tikzcd}[ampersand replacement=\&]
    	{F\big((M\triangleleft C)\triangleleft C'\;,\; N\big)} \&\& {F\big((M\triangleleft C)\;,\;(C'\triangleright N)\big)} \\
    	{F\big(M\triangleleft (C\otimes C')\;,\; N\big)} \\
    	{F\big(M\;,\; (C\otimes C')\triangleright N\big)} \&\& {F\big(M\;,\; C\triangleright(C'\triangleright N)\big),}
    	\arrow["{\text{bal}^F_{M\triangleleft C,C',N}}", from=1-1, to=1-3]
    	\arrow["{\text{bal}^F_{M,C,C'\triangleright N}}", from=1-3, to=3-3]
    	\arrow[from=2-1, to=1-1]
    	\arrow["{\text{bal}^F_{M,C\otimes C',N}}"', from=2-1, to=3-1]
    	\arrow[from=3-1, to=3-3]
    \end{tikzcd}
    \]
    \[
        \begin{tikzcd}[ampersand replacement=\&]
        	{F(M\triangleleft\1\,,\,N)} \&\& {F(M\,,\,\1\triangleright N)} \\
        	\& {F(M\,,\,N)}
        	\arrow["{\text{bal}^F_{M,\1,N}}", from=1-1, to=1-3]
        	\arrow[from=1-1, to=2-2]
        	\arrow[from=1-3, to=2-2]
        \end{tikzcd}
    \]
    which enforce compatibility with the relevant module structure maps.

    A $\mathcal C$-balanced natural transformation is a natural transformation of the underlying functors that intertwines that $\mathcal C$-balancings.
    Together, these form a category $\Rex_{\mathbb K}^{\mathcal C\text{-bal}}(\mathcal M,\mathcal N;\mathcal E)$.
    
    The relative Deligne-Kelly tensor product $\mathcal M\boxtimes_{\mathcal C}\mathcal N$ is a category in $\Rex_{\mathbb K}$ together with a bilinear, finitely cocontinuous (in each variable) functor $Q:\mathcal M\times\mathcal N\to\mathcal M\boxtimes_{\mathcal C}\mathcal N$, restriction along which induces an equivalence
    \[Q^*:\Rex_{\mathbb K}(\mathcal M\boxtimes_{\mathcal C}\mathcal N,\mathcal E)\xrightarrow{\simeq}\Rex^{\mathcal C\text{-bal}}_{\mathbb K}(\mathcal M,\mathcal N\,;\mathcal E)\,.\]
\end{definition}

Note the similarity between Definition \ref{def:C-balFunctors&RelativeDeligne} and Definition \ref{def:UniversalDeligne}.
In fact, the triangle relation for $\mathcal C$-balancings implies that $\mathcal M\boxtimes_{\Vec_{\mathbb K}}\mathcal N$ is canonically equivalent as a category to $\mathcal M\boxtimes\mathcal N$.

\section{The 4-category \texorpdfstring{$\M$}{Mor\_2}}\label{sec:The 4-cat Mor2}

Here we discuss a 4-category built out of braided monoidal categories over $\mathbb K$, and their various bimodules.
A version of this category was described in \cite{MR4228258} and \cite{MR4302495}.
The version that we are concerned with here is more restrictive, so to be explicit, we will give both a concise technical definition, as well as a more verbose expanded description.

In brief, the $E_n$-Morita category $\Mor_n(\mathcal S)$ built from an $(\infty,d)$-category $\mathcal S$ consists of $E_n$-algebra objects in $\mathcal S$, bimodules valued in $E_{n-1}$-algebras, bimodules of bimodules valued in $E_{n-2}$-algebras, ..., bimodules of bimodules of... bimodules valued in $\mathcal S$, 1-morphisms of such iterated bimodules, 2-morphisms of such, ... , and $d$-morphisms of such.
In other words, it is an $(\infty,d+n)$-category, the first $n$ levels of which consist of algebras and iterated bimodules, and with the last $d$ levels consisting of bimodule structure preserving $(k-n)$-morphisms for $n+1\leq k\leq n+d$.

The machinery that allowed for a rigorous treatment of these higher Morita categories was first developed\footnote{Many of the references in this paragraph refer to the construction as $\Alg_n$.} in \cite{Scheimbauer2014FactorizationHA}.
Further refinements followed in \cite{haugsengTheHigherMoritaCategory}, \cite{MR3590516}, and \cite{gwilliam2018dualsadjointshighermorita}.
A key issue was that early iterations of this construction had top-level bimodules come equipped with a pointing, which was somewhat unnatural from an algebraist's perspective (see e.g. \cite[Sec. 1.4]{gwilliam2018dualsadjointshighermorita}).
This issue was finally resolved in \cite{dissertation}.
Specifically we are using the `even higher pointless' version described in Section 9.2 loc. cit.

\begin{definition}
    The 4-category $\M$ is the pointless $E_2$-Morita category $\Mor_2^{pl}(\Rex_{\mathbb K})$.
\end{definition}

While the research described above allows us to talk about higher Morita categories in a wide array of contexts, the most serious application of this theory has been in the context of $\M$ and $\Mor_1$.
In terms of our ability to perform technical computations in this specific instance, the majority of the practical results that we will employ were established\footnote{These references use the name $\BrTens_{\mathbb K}:=\Mor_2^{pl}(\mathcal Pr_{\mathbb K})$, and $\Tens_{\mathbb K}:=\Mor_1^{pl}(\mathcal Pr_{\mathbb K})$.  Here $\mathcal Pr_{\mathbb K}$ is the 2-category of locally presentable $\mathbb K$-linear categories, but we are working in the smaller setting of $\mathcal S=\Rex_{\mathbb K}$.} in \cite{douglasDualizable}, \cite{MR4228258}, and \cite{MR4302495}.
We now follow their example and provide a nuts-and-bolts description of $\M$.

\begin{definition}[unpacked, cf. {\cite[Def. 2.10]{MR4302495}}]
    The 4-category $\M$ has the following morphisms (all relevant categories and morphisms are in $\Rex_{\mathbb K}$):
    \begin{itemize}
        \item Objects: Braided monoidal categories.
        \item 1-Morphisms $\mathcal A\to\mathcal B$: Monoidal categories $\mathcal C$ equipped with a braided functor $F_{\mathcal C}:\mathcal A\bt\mathcal B^{rev}\to\mathcal Z(\mathcal C)$
        \item 2-Morphisms $\mathcal C\to\mathcal D$: $\mathcal C$-$\mathcal D$ bimodules $\mathcal M$, equipped with an $\mathcal A\bt\mathcal B^{rev}$-central structure, i.e. a natural isomorphism
        \[F_{\mathcal C}(X)\triangleright_{\mathcal C}M\to M\triangleleft_{\mathcal D}F_{\mathcal D}(X)\,,\]
        satisfying the relation
        \[
            \begin{tikzcd}[ampersand replacement=\&]
        	{F_{\mathcal C}(X\otimes Y)\triangleright M} \& {M\triangleleft F_{\mathcal D}(X\otimes Y)} \\
        	{F_{\mathcal C}(X)\triangleright \big(F_{\mathcal C}(Y)\triangleright M\big)} \& {\big(M\triangleleft F_{\mathcal D}(X)\big)\triangleleft F_{\mathcal D}(Y)} \\
        	{F_{\mathcal C}(X)\triangleright \big(M\triangleleft F_{\mathcal D}(Y)\big)} \& {\big(F_{\mathcal C}(X)\triangleright M\big)\triangleleft F_{\mathcal D}(Y)\,.}
        	\arrow[from=1-1, to=1-2]
        	\arrow[from=1-1, to=2-1]
        	\arrow[from=2-1, to=3-1]
        	\arrow[from=2-2, to=1-2]
        	\arrow[from=3-1, to=3-2]
        	\arrow[from=3-2, to=2-2]
            \end{tikzcd}
        \]
        \item 3-Morphisms $\mathcal M\to\mathcal N$: Bimodule functors $F:\mathcal M\to\mathcal N$ whose structure maps are coherent with respect to the $\mathcal A\bt\mathcal B^{rev}$-central structure.
        \item 4-Morphisms $F\to G$: Bimodule natural transformations $\mu:F\to G$.
    \end{itemize}
    Composition of 1- and 2-morphisms comes from relative Deligne tensor product (see Definition \ref{def:C-balFunctors&RelativeDeligne}), which exists by \cite[Rmk. 3.15]{MR3847209}, and composition of 3- and 4-morphisms is ordinary composition of bimodule functors and natural transformations.
    The central structures on a composition of central functors is constructed in a manner analogous to that for bimodule structures.
\end{definition}

Although we would prefer to build $\M$ out of $2\Vec_{\mathbb K}$ instead of the larger $\Rex_{\mathbb K}$, this is not possible, because $2\Vec_{\mathbb K}$ is not closed under relative tensor product.
Indeed, $\Vec_{\mathbb K}\boxtimes_{\Vec_{\mathbb K}(G)}\Vec_{\mathbb K}\simeq\Rep_{\mathbb K}(G)$, and so we encounter the same problem outlined in Example \ref{eg:NonSemisimpleCenters}.
The results in \cite{decoppet2024classificationfusion2categories} describe a `fully separable' sub-4-category built out of the 2-separable fusion categories and 1-separable modules, but this is not necessary for our computations.

For our purposes here, we will content ourselves with making arguments about 2-separable (braided) fusion categories, thinking of them as the well-behaved objects inside the greater chaos that is the gigantic 4-category $\M$.
For this purpose, we close the section with an important fact.

\begin{proposition}
    The relative Deligne product $\mathcal M\boxtimes_{\mathcal C}\mathcal N$ of two 1-separable module categories over a 2-separable multifusion category is again 1-separable.
\end{proposition}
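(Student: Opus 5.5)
The plan is to reduce the relative Deligne product to a category of modules over a separable algebra, and then apply the separability criteria from the preliminaries.

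\emph{Setting up the internal algebra.} Let $\mathcal C$ be 2-separable multifusion, $\mathcal M$ a 1-separable right $\mathcal C$-module and $\mathcal N$ a 1-separable left $\mathcal C$-module. By Example \ref{eg:Eilenberg-Watts}, $\mathcal M\bt\mathcal N$ is 1-separable. It carries an action of $\mathcal C\bt\mathcal C^{mp}$, namely $(C\bt C')\cdot(M\bt N)=(M\triangleleft C)\bt(C'\triangleright N)$, and this makes it a module over the monoidal category $\mathcal C\bt\mathcal C^{mp}$. The balancing condition should be encoded by the canonical algebra
\[R:=I(\1)\in\mathcal C\bt\mathcal C^{mp},\]
where $I$ is the right adjoint to $\otimes$ from Definition \ref{def:Rigid&Separable}. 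Concretely, $R$ is the "coend" $\int^{C}C^*\bt C$, and its modules in a $\mathcal C\bt\mathcal C^{mp}$-module category are exactly the objects equipped with coherent balancing data.

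\emph{Identifying the relative product.} The next step is the standard identification
\[\mathcal M\bt_{\mathcal C}\mathcal N\;\simeq\;\big(\mathcal M\bt\mathcal N\big)_R,\]
the category of $R$-modules internal to the $\mathcal C\bt\mathcal C^{mp}$-module category $\mathcal M\bt\mathcal N$. The functor $Q$ is $(M,N)\mapsto R\cdot(M\bt N)$. Its universal property would be checked by Barr–Beck monadicity for the adjunction between $\mathcal M\bt\mathcal N$ and $R$-modules, together with the fact that a cocontinuous functor out of $\mathcal M\bt\mathcal N$ is $\mathcal C$-balanced exactly when it factors through this free-module construction. This is the argument of Douglas–Schommer-Pries–Snyder, and it works verbatim in $\Rex_{\mathbb K}$ once $\mathcal C$ is rigid.

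\emph{Separability of $R$.} $R$ is a separable algebra in $\mathcal C\bt\mathcal C^{mp}$, and this is exactly what 2-separability provides. The section $\xi$ of the counit $\epsilon:\otimes\circ I\Rightarrow\Id$ is a bimodule 2-morphism, and evaluating the adjunction data at $\1$ produces a multiplication $R\otimes R\to R$ together with a bimodule splitting $R\to R\otimes R$. Equivalently, ${}_R(\mathcal C\bt\mathcal C^{mp})_R\simeq\mathcal Z(\mathcal C)$, which is 1-separable by Proposition \ref{prop:Separable<=>Z is LocSep}, so $R$ is separable by the cited \cite[Prop. A.23]{MR4806973}. The category $\mathcal C\bt\mathcal C^{mp}$ is itself multifusion, being 1-separable by Example \ref{eg:Eilenberg-Watts} and rigid.

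\emph{Conclusion and main obstacle.} It remains to show that modules over a separable algebra $R$, taken in a 1-separable module category $\mathcal X=\mathcal M\bt\mathcal N$, form a 1-separable category. The free functor $X\mapsto R\cdot X$ and the forgetful functor are adjoint. Separability of $R$ makes every $R$-module a retract of a free one, functorially, via the splitting $R\to R\otimes R$. Hence $\mathcal X_R$ is the idempotent completion of the image of a 1-separable category under a separable monad. Writing $\mathcal X\simeq\Mod(B)$ with $B$ separable, the monad $R\cdot(-)$ is given by a $B$-$B$ bimodule algebra. Its modules are then $\Mod(B')$ with $B'$ a finite-dimensional algebra over $B$, and $B'$ is separable as a composite of separable extensions: the section of $\mu_{B'}$ is built from those for $B$ and $R$.

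I expect this last step to be the main obstacle, namely transporting the separability idempotent through the Eilenberg–Watts dictionary carefully enough to obtain a bimodule section for $B'$ over $\mathbb K$. The monadicity identification is, by contrast, standard.
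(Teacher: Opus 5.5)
Your proposal is correct, but it is organized differently from the paper's proof.

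\textbf{The paper's route.} The paper works one module at a time. It uses Ostrik's theorem to write $\mathcal M\simeq{}_A\mathcal C$ and $\mathcal N\simeq\mathcal C_B$. It then invokes Theorem \ref{thm:LocSep&Sep=>Sep} to see that $A$ and $B$ are separable; this is where 2-separability of $\mathcal C$ enters. Next it identifies $\mathcal M\bt_{\mathcal C}\mathcal N\simeq{}_A\mathcal C_B$ via \cite[Thm. 3.3(2)]{douglasBalancedTensorProduct2019}. Finally it appeals to an unspelled ``straightforward modification'' of \cite[Thm. A.23]{MR4806973} for bimodules over separable algebras.

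\textbf{Your route.} You use the single canonical algebra $R=I(\1)$ acting on $\mathcal M\bt\mathcal N$. This has three effects.
\begin{enumerate}
\item 2-separability enters exactly once, as separability of $R$, through ${}_R(\mathcal C\bt\mathcal C^{mp})_R\simeq\mathcal Z(\mathcal C)$ and Proposition \ref{prop:Separable<=>Z is LocSep}.
\item You need neither Ostrik's theorem for $\mathcal M,\mathcal N$ nor Theorem \ref{thm:LocSep&Sep=>Sep}. The only input from the modules is 1-separability of $\mathcal M\bt\mathcal N$, which Example \ref{eg:Eilenberg-Watts} supplies.
\item The last step becomes elementary ring theory.
\end{enumerate}
What the paper's route buys in exchange is a concrete working model ${}_A\mathcal C_B$ of the product.

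\textbf{The step you flagged as the main obstacle is not one.} Take separability idempotents $e=\sum_i b_i\otimes b_i'\in B\otimes_{\mathbb K}B$ and $e'=\sum_j x_j\otimes_B y_j\in B'\otimes_B B'$. Form the element $\sum_{i,j}x_jb_i\otimes b_i'y_j\in B'\otimes_{\mathbb K}B'$.
\begin{itemize}
\item It is well defined, because $x\otimes y\mapsto\sum_i xb_i\otimes b_i'y$ is $B$-balanced; this uses that $e$ is $B$-central.
\item It multiplies to $1$.
\item It is $B'$-central, because that map is a $B'$-bimodule map and $e'$ is $B'$-central.
\end{itemize}
So transitivity of separability holds even though $B$ is noncommutative. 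Similarly, Eilenberg--Watts turns the separable monad $R\cdot(-)$ into a $B$-ring $B'$ that is separable over $B$. This is exactly what that step needs.

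\textbf{Two harmless conventions.}
\begin{itemize}
\item Your action formula makes $\mathcal M\bt\mathcal N$ a right $\mathcal C\bt\mathcal C^{mp}$-module, equivalently a left $\mathcal C^{mp}\bt\mathcal C$-module.
\item $I(\1)$, being the value of a right adjoint, is naturally an end rather than a coend. In the separable setting the two agree.
\end{itemize}
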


\begin{proof}
    Using Ostrik's theorem \cite{ostrikModuleCatsWeakHopf}, we can find an algebra object $A$ in $\mathcal C$ for which $\mathcal M\simeq {}_A\mathcal C$ as right $\mathcal C$ module categories.
    By Theorem \ref{thm:LocSep&Sep=>Sep}, $A$ is a separable algebra in $\mathcal C$.
    Similarly for $\mathcal N$, we can find some separable algebra $B$ for which $\mathcal N\simeq\mathcal C_B$ as left $\mathcal C$ module categories.
    
    By \cite[Thm. 3.3(2)]{douglasBalancedTensorProduct2019} the category $\mathcal M\boxtimes_{\mathcal C}\mathcal N$ can be realized as ${}_A\mathcal C_B$, the category of $A$-$B$ bimodules internal to $\mathcal C$.
    A straightforward modification of the argument in the proof of \cite[Thm. A.23]{MR4806973} shows that separability of $A$ and $B$ implies that ${}_A\mathcal C_B$ must also be 1-separable.
\end{proof}

\section{Invertibility conditions}

Here we investigate the invertibility of certain 1-morphisms and 2-morphisms in $\M$.
The facts collected here will be the main tools for proving Theorem \ref{thm:main invertibility theorem}.
Throughout this and following sections, we will use the notation $\mathcal Z:=\mathcal Z(\mathcal C)$ to simplify statements and formulas.

The following result from the theory of 2-categories is well-known.
\begin{proposition}[{cf. \cite[Lemma 2.24]{MR4302495}}]\label{prop:inductive invertibility}
    A 1-morphism $\mathcal C:\mathcal B\to\mathcal A$ in a 2-category is invertible if, and only if, it is adjointable, and the unit and counit maps for this adjunction are isomorphisms.
\end{proposition}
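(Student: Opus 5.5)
The plan is to prove both directions directly from the definitions of equivalence and adjunction in a 2-category. An invertible 1-morphism here means an equivalence: there is a 1-morphism $\mathcal D:\mathcal A\to\mathcal B$ together with invertible 2-morphisms $\eta:\Id_{\mathcal B}\Rightarrow\mathcal D\circ\mathcal C$ and $\epsilon:\mathcal C\circ\mathcal D\Rightarrow\Id_{\mathcal A}$.

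\emph{The direction ``if''.} This is immediate. Suppose $\mathcal C\dashv\mathcal D$ is an adjunction, say with $\mathcal D$ a right adjoint, whose unit and counit are isomorphisms. Then the pair $(\mathcal D,\eta,\epsilon)$ exhibits $\mathcal D$ as an inverse of $\mathcal C$, since these 2-cells witness $\mathcal D\circ\mathcal C\cong\Id$ and $\mathcal C\circ\mathcal D\cong\Id$. No use of the triangle identities is needed here.

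\emph{The direction ``only if''.} This is the standard ``every equivalence can be promoted to an adjoint equivalence'' argument. Start from an arbitrary equivalence datum $(\mathcal D,\eta,\epsilon)$ with $\eta,\epsilon$ invertible. Keep $\eta$, and replace $\epsilon$ by the corrected counit
\[\epsilon'\;:=\;\epsilon\circ(\mathcal C\,\eta^{-1}\,\mathcal D)\circ(\epsilon^{-1}\,\mathcal C\,\mathcal D)\,,\]
with the associators and unitors of the ambient 2-category inserted where needed. This $\epsilon'$ is a composite of invertible 2-cells, hence invertible.

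It then remains to verify the two triangle identities for $(\eta,\epsilon')$:
\[(\epsilon'\mathcal C)\circ(\mathcal C\eta)=\id_{\mathcal C}\,,\qquad(\mathcal D\epsilon')\circ(\eta\mathcal D)=\id_{\mathcal D}\,.\]
The first follows by pasting-diagram manipulation, using naturality of whiskering (the interchange law) to slide $\eta$ past $\epsilon^{-1}$ and cancel. For the second, one standard route is to show that the 2-cell $\theta:=(\mathcal D\epsilon')\circ(\eta\mathcal D)$ satisfies $\theta\circ\theta=\theta$ after whiskering. Since $\theta$ is invertible, this forces $\theta=\id$.

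The main obstacle is the bookkeeping in this second triangle identity. It is a genuine 2-categorical computation, and in a non-strict 2-category one must track associators and unitors. I would handle this by invoking coherence, which lets us assume the 2-category is strict, and then carry out the string-diagram calculation. Alternatively, as the statement itself notes, this is well known, and one may simply cite the cited lemma of \cite{MR4302495} or any standard reference on bicategories for the adjoint-equivalence improvement. Finally, note that the statement is applied to hom 2-categories of $\M$, where the same argument applies verbatim at each level.
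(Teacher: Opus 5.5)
Your argument is correct. The paper gives no proof of this statement: it records it as well known and cites \cite[Lemma 2.24]{MR4302495}. What you wrote is the standard argument that citation stands in for: promote an equivalence to an adjoint equivalence by correcting the counit, then get the second triangle identity from the fact that the invertible 2-cell $\theta$ is idempotent. Your write-up therefore makes the fact self-contained where the paper does not.

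Two points are worth making explicit.

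First, the ``cancel'' step in your first triangle identity is not a literal cancellation. After sliding, the middle of the composite is $\mathcal C\bigl((\eta^{-1}\mathcal D\mathcal C)\circ(\mathcal D\mathcal C\,\eta)\bigr)$, sandwiched between $\epsilon\,\mathcal C$ and $\epsilon^{-1}\mathcal C$. This middle factor is the identity only because $\eta\,\mathcal D\mathcal C=\mathcal D\mathcal C\,\eta$. That equality follows from the interchange identity $(\eta\,\mathcal D\mathcal C)\circ\eta=(\mathcal D\mathcal C\,\eta)\circ\eta$ together with invertibility of $\eta$.

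Second, your ``only if'' direction produces \emph{some} adjunction with invertible unit and counit. The paper, however, uses the statement for a \emph{given} adjunction. In the proof of Theorem \ref{thm:main invertibility theorem} it deduces, from invertibility of $\mathcal C$, that the specific counit $\epsilon=\mathcal C$ of the duality $\mathcal C\dashv\mathcal C^{mp}$ is invertible. To cover this reading, add the standard uniqueness of adjoints. Suppose $(\mathcal E,\bar\eta,\bar\epsilon)$ is any other right adjoint of $\mathcal C$. Then $\phi:=(\mathcal E\epsilon)\circ(\bar\eta\,\mathcal D)\colon\mathcal D\Rightarrow\mathcal E$ is invertible, with inverse $(\mathcal D\bar\epsilon)\circ(\eta\,\mathcal E)$. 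It also satisfies $\bar\eta=(\phi\,\mathcal C)\circ\eta$ and $\bar\epsilon=\epsilon\circ(\mathcal C\,\phi^{-1})$. Hence $\bar\eta$ and $\bar\epsilon$ are invertible as soon as $\eta$ and $\epsilon$ are.
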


Let's specialize to the case where $\mathcal C$ is a 2-separable fusion category over $\mathbb K$ so that, in particular, $\mathcal Z:=\mathcal Z(\mathcal C)$ is fusion as well.
Let us think of $\mathcal C$ as a $1$-morphism $\mathcal Z\to\Vec_{\mathbb K}$ in $\M$.
By the results in \cite[Sec. 3.11]{douglasDualizable}, the dual to $\mathcal C$ in $\M$ does exist, and it is given by the monoidal opposite, which we will denote as $\mathcal C^{mp}$.
The unit $\eta$ (or coevaluation) and counit $\epsilon$ (or evaluation) $2$-morphisms for the adjunction $\mathcal C\dashv\mathcal C^{mp}$ are
\begin{align}
    \eta:&=\mathcal C\,,\hspace{1mm}\text{as a }(\Vec_{\mathbb K},\Vec_{\mathbb K})\text{-central }(\mathcal C^{mp}\bt_{\mathcal Z}\mathcal C,\Vec_{\mathbb K})\text{ bimodule, \&}\\
    \epsilon:&=\mathcal C\,,\hspace{1mm}\text{as a }(\mathcal Z,\mathcal Z)\text{-central }(\mathcal Z,\mathcal C\bt\mathcal C^{mp})\text{ bimodule}\,.
\end{align}

By combining Proposition \ref{prop:inductive invertibility} with an inductive truncation argument, the invertibility of $\mathcal C$ as a $1$-morphism in $\M$ can be verified by proving that $\eta$ and $\epsilon$ are invertible bimodule categories.

It turns out that module categories for multifusion categories can often be upgraded to invertible bimodules.
We will need some notation to make sense of this procedure.

\begin{definition}[{\cite[Def. 7.12.2]{MR3242743}}]
    Given a monoidal category $\mathcal D$, and a left or right module category $\mathcal M$, the module dual\footnote{This construction is also sometimes called the centralizer of $\mathcal D$ in $\End(\mathcal M)$.} (of $\mathcal D$, with respect to $\mathcal M$) is
    \[\mathcal D^*_{\mathcal M}:=\Fun_{\mathcal D}(\mathcal M,\mathcal M)\;.\]
\end{definition}

The module dual has a monoidal structure coming from composition of functors, and naturally acts on $\mathcal M$ from the left.

\begin{theorem}[{cf. \cite[Ex. 7.12.8 \& Theorem 7.12.11]{MR3242743}}]\label{thm:faithful to invertible}
    Suppose $\mathcal D$ is a 2-separable multifusion category, and $\mathcal M$ is a 1-separable and faithful right $\mathcal D$ module category.
    By endowing it with the corresponding $(\mathcal D^*_{\mathcal M},\mathcal D)$ bimodule structure, $\mathcal M$ becomes an invertible bimodule category.
    Dually, if $\mathcal M$ is a 1-separable and faithful left $\mathcal D$ module category, then $\mathcal M$ is invertible when equipped with the $(\mathcal D,(\mathcal D^*_{\mathcal M})^{mp})$ bimodule structure.
\end{theorem}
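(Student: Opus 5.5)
We will reduce to the standard argument of \cite[Thm. 7.12.11]{MR3242743}, checking that every step survives over an arbitrary field under our separability hypotheses; we treat the right-module case, the left one following by passing to $\mathcal D^{mp}$. Write $\mathcal E:=\mathcal D^*_{\mathcal M}=\Fun_{\mathcal D}(\mathcal M,\mathcal M)$. By Ostrik's theorem \cite{ostrikModuleCatsWeakHopf} we may write $\mathcal M\simeq{}_A\mathcal D$ for an algebra $A\in\mathcal D$. Since $\mathcal M$ is 1-separable and $\mathcal D$ is 2-separable, Theorem \ref{thm:LocSep&Sep=>Sep} makes $A$ separable. Then $\mathcal E\simeq({}_A\mathcal D_A)^{mp}$ (or ${}_A\mathcal D_A$, depending on conventions), which is 1-separable and rigid by \cite[Prop. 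A.23]{MR4806973}. So $\mathcal E$ is multifusion, and $\mathcal M$ is an $(\mathcal E,\mathcal D)$-bimodule.

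To prove invertibility we use Proposition \ref{prop:inductive invertibility}. The candidate inverse is $\mathcal M^{op}\simeq\mathcal D_A$, viewed as a $(\mathcal D,\mathcal E)$-bimodule. We must show that the two canonical functors
\[\mathcal M\bt_{\mathcal D}\mathcal M^{op}\to\mathcal E,\qquad \mathcal M^{op}\bt_{\mathcal E}\mathcal M\to\mathcal D\]
are equivalences. For the first, \cite[Thm. 3.3(2)]{douglasBalancedTensorProduct2019} identifies ${}_A\mathcal D\bt_{\mathcal D}\mathcal D_A\simeq{}_A\mathcal D_A$. Tracing through, this functor is exactly the identification of $\mathcal E$ with $A$-bimodules, given by $M\mapsto -\otimes_A M$. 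Hence the first functor is an equivalence with no further hypotheses.

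The second functor is where faithfulness enters, and this is the main obstacle. Here we must show that $\mathcal D$ is Morita equivalent to its double dual via $\mathcal M$, i.e.\ $\mathcal D\simeq(\mathcal E)^*_{\mathcal M}$, the double centralizer property. Over an algebraically closed field this is \cite[Thm. 7.12.11]{MR3242743}, and we would follow that argument. We regard $\mathcal M$ as a module category over $\mathcal E$ and form the internal end. Faithfulness, meaning every simple summand of $\1_{\mathcal D}$ acts nontrivially on $\mathcal M$, ensures that the internal End object of $\mathcal D$ acting through $\mathcal M$ generates. Concretely, we show that the $\mathcal D$-bimodule $\mathcal D_A\bt_{\mathcal E}{}_A\mathcal D$ is the category of modules over $A$ in $\mathcal D$-bimodules, namely $A\otimes -\otimes A$-type free objects. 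We then show that the unit $\1\to$ (the relevant algebra) is split injective in each matrix block, precisely because $A$ has a nonzero component in each block $\1_i\mathcal D\1_j$ that faithfulness allows. Separability of $A$ supplies the splittings that replace the usual semisimplicity and dimension arguments. In this way the comparison functor becomes an equivalence of categories of modules over Morita equivalent separable algebras.

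One must check that no step uses simplicity of $\End(X)$ or the existence of square roots of dimensions. The earlier arguments rely on counting multiplicities over $\overline{\mathbb K}$, and we replace that counting with the split (co)units coming from separability. Should this prove awkward, a fallback is to extend scalars to $\overline{\mathbb K}$. 2-separability is preserved under this base change, as are faithfulness and the comparison functors. An equivalence can then be detected after base change, since $-\otimes\overline{\mathbb K}$ is faithful and conservative on 1-separable categories, which reduces the statement to the classical theorem.
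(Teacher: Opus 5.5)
There is a genuine gap, at exactly the step you flag as the main obstacle. Up to the first composite you agree with the paper: Ostrik's theorem plus Theorem \ref{thm:LocSep&Sep=>Sep}, then $\mathcal E\simeq{}_A\mathcal D_A$, the inverse $\mathcal D_A$, and ${}_A\mathcal D\bt_{\mathcal D}\mathcal D_A\simeq{}_A\mathcal D_A$ via \cite[Thm. 3.3(2)]{douglasBalancedTensorProduct2019}.

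The second composite $\mathcal D_A\bt_{\mathcal E}{}_A\mathcal D\to\mathcal D$ is the real content and the only place faithfulness enters, and you do not prove it. \cite[Thm. 7.12.11]{MR3242743} is stated over algebraically closed fields. Your sketch never computes the relative product over $\mathcal E$, and the description with free objects of type $A\otimes-\otimes A$ is not right.

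The missing idea is to present ${}_A\mathcal D$ as modules over an algebra internal to $\mathcal E$. Take $B:=A\otimes A^*$, the internal End of $A\in{}_A\mathcal D$ for the $\mathcal E$-action. Then ${}_A\mathcal D\simeq\mathcal E_B$ and $\mathcal D_A\simeq{}_B\mathcal E$, so the composite is ${}_B\mathcal E_B\simeq{}_B\mathcal D_B$. The functor $X\mapsto A\otimes X\otimes A^*$ is then an equivalence $\mathcal D\to{}_B\mathcal D_B$, with inverse $N\mapsto A^*\otimes_BN\otimes_BA$, once $P:=A^*\otimes_BA\cong\1$. Since $P\otimes P\cong P$, $P$ is a summand of $\1$, and faithfulness must exclude a proper summand.

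Your remark about a unit being split injective is the right extra ingredient for that last step, but it needs correcting in two ways.
\begin{itemize}
\item It concerns the diagonal pieces $\1_jA\1_j$, not every block $\1_i\mathcal D\1_j$. For $\mathcal M=\mathcal D$ and $A=\1$, the off-diagonal pieces vanish.
\item It constrains the \emph{choice} of $A$; it does not follow from faithfulness for every Ostrik algebra. In the $2\times2$ matrix category $\Fun(\Vec_{\mathbb K}^{\oplus2},\Vec_{\mathbb K}^{\oplus2})$ with $\1=\1_1\oplus\1_2$, the algebra $A=\1_1$ presents the faithful module $\1_1\otimes\mathcal D$. Yet $B=\1_1$, $A$ does not generate that module over $\mathcal E$, and $P=\1_1\neq\1$.
\end{itemize}
So take $A$ to be the internal End, for the $\mathcal D$-action, of a generator $m$ with $m\triangleleft\1_j\neq0$ for all $j$. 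This is possible precisely because $\mathcal M$ is faithful, and it makes $\1\to A$ a monomorphism. Then $A$ generates $\mathcal M$ over $\mathcal E$, giving ${}_A\mathcal D\simeq\mathcal E_B$, and $A^*$ generates $\mathcal D$ as a right $\mathcal D$-module, giving $P\cong\1$. The paper's write-up leaves this choice implicit.

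Your fallback via $\overline{\mathbb K}$ is a genuinely different route and could be completed, but each clause is an unproved assertion carrying real weight. You would need three things.
\begin{itemize}
\item Scalar extension commutes with $\Fun_{\mathcal D}(\mathcal M,\mathcal M)$, with relative Deligne products over $\mathcal D$ and $\mathcal E$, and with the comparison functors. This is provable through the separable models ${}_A\mathcal D_A$, ${}_A\mathcal D$ and $\mathcal D_A$.
\item Faithfulness survives extension. This is true: if $m\triangleleft\1_j\neq0$, the field $\End(\1_j)$ embeds unitally in $\End(m\triangleleft\1_j)$, so every primitive idempotent of $\End(\1_j)\otimes\overline{\mathbb K}$ acts nontrivially. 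But it cannot be waved through when unit summands split under extension, cf.\ Lemma \ref{lem:main faithfulness lemma}.
\item A correct detection principle, since ``faithful and conservative'' does not detect equivalences of categories. Full faithfulness descends because Hom spaces base change flatly. Essential surjectivity descends because the essential image of a fully faithful functor out of a 1-separable category is closed under summands, and every object after extension is a summand of an extended one.
\end{itemize}
With these in place, descent reduces the theorem to the algebraically closed case. The paper's route instead stays over $\mathbb K$ and isolates the use of faithfulness in the single isomorphism $A^*\otimes_BA\cong\1$.
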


\begin{proof}
    We will focus on the case where $\mathcal M$ is a right $\mathcal D$ module, since the dual case is analogous.
    
    1-Separability of $\mathcal M$ implies two things:
    \begin{enumerate}
        \item $\mathcal M$ is an exact module category, and
        \item By Ostrik's Theorem \cite{ostrikModuleCatsWeakHopf}, $\mathcal M\simeq {}_{A}\mathcal D$ as right module categories, for some internal, algebra $A$ in $\mathcal D$ that is separable, thanks to Theorem \ref{thm:LocSep&Sep=>Sep}.
    \end{enumerate}
    The second condition implies that $\mathcal D^*_{\mathcal M}\simeq{}_A\mathcal D_A$ as monoidal categories, where the monoidal structure on ${}_A\mathcal D_A$ is $\otimes_A$ (this is an internal version of Eilenberg-Watts).
    In particular, $\mathcal D^*_{\mathcal M}$ is 1-separable, thanks to Theorem \ref{thm:LocSep&Sep=>Sep}.
    The first condition implies that $\mathcal D^*_{\mathcal M}$ is rigid, and hence a multifusion category.
    It then follows that $\mathcal M$ is a 1-separable left $\mathcal D^*_{\mathcal M}$ module category.

    This left module structure is automatically faithful.
    Indeed, if $F$ is a nonzero summand of the identity functor, then there must be some $M\in\mathcal M$ where $F(M)\neq0$, for otherwise $F$ would be the zero functor.

    Using Ostrik's theorem again, it follows that $\mathcal M\simeq({}_A\mathcal C_A)_B$ as left ${}_A\mathcal D_A$ module categories, where $B$ is an algebra internal to ${}_A\mathcal C_A$.
    Using internal homs, this algebra can be identified with the object $B=A\otimes A^*$.
    The unit for $B$ is the map
    \[\mathbf{1}'_B:A\xrightarrow{\id\otimes\mathrm{coev_A}}A\otimes A\otimes A^*\xrightarrow{\mu_A\otimes\id}B\,.\]
    The composite
    \[\mu_B:B\otimes B\;:=\;A\otimes A^*\otimes A\otimes A^*\xrightarrow{\id\otimes\mathrm{ev}_A\otimes\id}A\otimes\1\otimes A^*\cong A\otimes A^*\;=:\;B\]
    is easily seen to be $A$-balanced, and therefore descends to a map $\mu'_B:B\otimes_A B\to B$, thus giving $B=(B,\mu'_B,\mathbf{1}'_B)$ the structure of an algebra internal to ${}_A\mathcal D_A$.
    Notice that by composing with the unit for $A$, we obtain
    \[\mathbf{1}_B:\1\xrightarrow{\mathbf{1}_A}A\xrightarrow{\mathbf{1}'_B}B\,,\]
    and the triple $(B,\mu_B,\mathbf{1}_B)$ has the structure of an algebra internal to $\mathcal D$.
    Taken together, all of these interlocking structures imply that ${}_B({}_A\mathcal D_A)_B\simeq{}_B\mathcal D_B$
    as monoidal categories.

    Consider the object $P:=A^*\otimes_BA$.  This object has the property that
    \[P\otimes P\;=\;A^*\otimes_BA\otimes A^*\otimes_BA\;=\;A^*\otimes_BB\otimes_BA\;\cong\;P\,.\;\]
    The combinatorics of fusion rules then imply that $P$ must be a summand of $\1$ in $\mathcal D$ (see, e.g. \cite[Prop. 3.4]{MR4806973}).
    If $P\subsetneq\1$, this would imply that $\mathcal M\simeq{}_A\mathcal D$ is not faithful, so we must have that $P\cong\1$.

    From here, notice that there is a canonical functor $F:\mathcal D\to{}_B\mathcal D_{B}$ given by $X\mapsto A\otimes X\otimes A^*$.
    From our observation that $A^*\otimes_BA\cong\1$, we obtain an isomorphism
    \[(A\otimes X\otimes A^*)\otimes_B(A\otimes Y\otimes A^*)\xrightarrow{\cong}A\otimes (X\otimes Y)\otimes A^*\,,\]
    and this endows $F$ with a monoidal structure.
    The map $M\mapsto A^*\otimes_BM\otimes_BA$ is easily seen be the inverse to $F$, and thus we find that $\mathcal D\simeq{}_B\mathcal D_B$.

    Finally, we can use duals to give $\mathcal M^{op}$ the structure of a $(\mathcal D,\mathcal D^*_{\mathcal M})$ bimodule, and with this structure, $\mathcal M^{op}\simeq\mathcal D_A$ as a bimodule category.
    It follows that, as bimodule categories,
    \[\mathcal M\bt_{\mathcal D}\mathcal M^{op}\simeq{}_A\mathcal D\bt_{\mathcal D}\mathcal D_A\simeq{}_A\mathcal D_A\simeq\mathcal D^*_{\mathcal M}\,.\]
    Alternatively, under the identification $\mathcal D\simeq{}_B\mathcal D_B$, $\mathcal M\simeq{}_A\mathcal D_B$ and $\mathcal M^{op}\simeq{}_B\mathcal D_A$ as bimodule categories.
    This shows that
    \[\mathcal M^{op}\bt_{\mathcal D^*_{\mathcal M}}\mathcal M\simeq {}_B\mathcal D_A\bt_{{}_A\mathcal D_A}{}_A\mathcal D_B\simeq{}_B({}_A\mathcal D_A)\bt_{{}_A\mathcal D_A}({}_A\mathcal D_A)_B\simeq{}_B({}_A\mathcal D_A)_B\simeq\mathcal D\,.\]
    Therefore, $\mathcal M$ is invertible. 
\end{proof}

\section{The category \texorpdfstring{$\mathcal C^{mp}\boxtimes_{\mathcal Z}\mathcal C$}{Cmp[x]ZC} and its action}

Let us consider the left action of $\mathcal C^{mp}\boxtimes_{\mathcal Z}\mathcal C$ on $\eta=\mathcal C$.
This comes from the natural action of $\mathcal C^{mp}\bt\mathcal C$ on $\mathcal C$ coming from left and right multiplication.
For each object $W$ in $\mathcal C$, the functor 
\begin{gather*}
    \mathcal (-\bt-)\triangleright W:C^{mp}\times\mathcal C\longrightarrow\mathcal C\\
    (X,Y)\longmapsto Y\otimes W\otimes X
\end{gather*}
admits a canonical $\mathcal Z$-balancing.
This balancing, when applied to an object $(Z,\beta^Z)$ in $\mathcal Z$, is nothing more than (the inverse of) the half-braiding.
\[
\begin{tikzcd}[ampersand replacement=\&]
	{\big[(X\triangleleft Z)\boxtimes Y\big]\triangleright W} \&\& {\big[X\boxtimes(Z\triangleright Y)\big]\triangleright W} \\
	{Y\otimes W\otimes X\otimes Z} \&\& {Z\otimes Y\otimes W\otimes X}
	\arrow["{\text{bal}^W_{X,Z,Y}}", from=1-1, to=1-3]
	\arrow[equals, from=1-1, to=2-1]
	\arrow[equals, from=1-3, to=2-3]
	\arrow["{(\beta^Z_{(Y\otimes W\otimes X)})^{-1}}", from=2-1, to=2-3]
\end{tikzcd}
\]
This behaves coherently on tensor products because of the hexagon equations for the braiding in $\mathcal Z$.
Specifically, the `1-over-2' hexagons show that this respects tensor products on $\mathcal C^{mp}$ and $\mathcal C$, while the `2-over-1' hexagons show that this respects the tensor product of objects in $\mathcal Z$.

Thus, we find that the pair $((-\bt-)\triangleright W,\text{bal}^W)$ determines a functor $(-)\blacktriangleright W:\mathcal C^{mp}\bt_{\mathcal Z}\mathcal C\to\mathcal C$, and moreover this assignment is functorial.
In other words, we have an operation
\[\blacktriangleright\in\Fun\big(\mathcal C\,,\,\Fun(\mathcal C^{mp}\bt_{\mathcal Z}\mathcal C\,,\,\mathcal C)\big)\simeq\Fun(\mathcal C^{mp}\bt_{\mathcal Z}\mathcal C\,,\,\End(\mathcal C))\,.\]
When interpreted in the latter category, $\blacktriangleright$ is monoidal, and thus determines the desired action of $\mathcal C^{mp}\bt_{\mathcal Z}\mathcal C$ on $\mathcal C$.

\begin{proposition}\label{prop:dual is Muger center}
    The module dual of $\mathcal C^{mp}\boxtimes_{\mathcal Z}\mathcal C$ with respect to its action on $\mathcal C$ is canonically equivalent to $\mathcal Z_2(\mathcal Z)$.
\end{proposition}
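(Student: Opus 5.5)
We compute the module dual $\Fun_{\mathcal C^{mp}\bt_{\mathcal Z}\mathcal C}(\mathcal C,\mathcal C)$ in two stages, using the universal property of the relative Deligne product (Definition \ref{def:C-balFunctors&RelativeDeligne}).

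\emph{First stage: forget the $\mathcal Z$-balancing.} Restrict along the canonical functor $\mathcal C^{mp}\bt\mathcal C\to\mathcal C^{mp}\bt_{\mathcal Z}\mathcal C$. A module functor over $\mathcal C^{mp}\bt_{\mathcal Z}\mathcal C$ is the same as a functor $G:\mathcal C\to\mathcal C$ with a $\mathcal C^{mp}\bt\mathcal C$-module structure, i.e. a $\mathcal C$-$\mathcal C$ bimodule functor, subject to one extra condition. Call this condition $(\ast)$: the bimodule structure must be compatible with the balancing isomorphisms $\text{bal}^W$. Those balancings were written above in terms of the inverse half-braidings.

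By Definition \ref{def:DrinfeldCenter}, a bimodule endofunctor of $\mathcal C$ is an object $(T,\beta^T)$ of $\mathcal Z$, with $G\cong T\otimes(-)$. Module natural transformations over $\mathcal C^{mp}\bt_{\mathcal Z}\mathcal C$ are bimodule natural transformations, and $(\ast)$ is a property, not extra structure. So the module dual is the full subcategory of $\mathcal Z$ on the objects satisfying $(\ast)$.

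\emph{Second stage: identify $(\ast)$ with the Müger center.} Write out $(\ast)$ for $G=T\otimes(-)$ and an object $(Z,\beta^Z)\in\mathcal Z$. We must compare two isomorphisms
\[T\otimes Y\otimes W\otimes X\otimes Z\longrightarrow Z\otimes T\otimes Y\otimes W\otimes X.\]
The first is obtained by applying $T\otimes(-)$ to $(\beta^Z_{Y\otimes W\otimes X})^{-1}$ and then conjugating by the bimodule structure of $G$, which comes from $\beta^T$. The second is $(\beta^Z_{T\otimes Y\otimes W\otimes X})^{-1}$.

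The hexagon axiom for $\beta^Z$ splits $\beta^Z_{T\otimes V}$ as $\beta^Z_T$ followed by $\beta^Z_V$. Comparing, $(\ast)$ reduces to
\[\beta^T_Z\circ\beta^Z_T=\id_{Z\otimes T}\quad\text{for all } Z\in\mathcal Z,\]
up to direction conventions; one may set $W=X=Y=\1$ to see that this is necessary. This is exactly the condition that $T$ lies in $\mathcal Z_2(\mathcal Z)$.

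Monoidality follows because composition of bimodule endofunctors corresponds to the tensor product in $\mathcal Z$. Hence the equivalence is monoidal and canonical.

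\emph{Expected obstacle.} The hard step is making the first stage rigorous: showing that a module structure over the relative product is precisely a $\mathcal C^{mp}\bt\mathcal C$-module structure satisfying $(\ast)$, with nothing further. For the action, I would use the description of the action functor $\blacktriangleright$ as a $\mathcal Z$-balanced functor. Then the universal property (the equivalence $Q^*$) turns module functor data over the relative product into balanced data. Tracking which half-braiding appears ($\beta^Z_T$ versus $(\beta^T_Z)^{-1}$) will take care with conventions. I would check the sign of the monodromy on a simple example such as $\mathcal C=\Vec_{\mathbb K}(G)$.
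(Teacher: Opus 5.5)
Your proposal is correct and follows essentially the same route as the paper: restrict the module structure along $Q$ to get a half-braided object $(F(\1),\beta^{F(\1)})\in\mathcal Z$, then use the fact that $s$ is a $\mathcal Z$-balanced natural transformation, specialized to unit objects, to force the double braiding to be trivial, with $T\mapsto T\otimes(-)$ as the inverse construction. Your reduction of the full balancing condition $(\ast)$ to the monodromy condition, via the hexagon axiom together with naturality of $\beta^Z$, fills in the sufficiency direction that the paper leaves as ``not hard to see.''
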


\begin{proof}
    Let us write $\mathcal D$ for the module dual $(\mathcal C^{mp}\bt_{\mathcal Z}\mathcal C)_{\mathcal C}^*$.
    Suppose $(F,s)$ is an object in $\mathcal D$.
    Using restriction along $Q$, we can use the module structure isomorphism $s$ for $F$ to construct the following isomorphism
    \[\begin{tikzcd}[row sep=.5,ampersand replacement=\&]
    	{F(\1)\otimes X} \& {Q(X,\1)\blacktriangleright F(\1)} \& {F\big(Q(X,\1)\blacktriangleright\1\big)} \\
    	\& {F(X)} \\
    	{F\big(Q(\1,X)\blacktriangleright\1\big)} \& {Q(\1,X)\blacktriangleright F(\1)} \& {X\otimes F(\1)}
    	\arrow["{=}", from=1-1, to=1-2]
    	\arrow["s", from=1-2, to=1-3]
    	\arrow["{=}", from=1-3, to=2-2]
    	\arrow["{=}"', from=2-2, to=3-1]
    	\arrow["{s^{-1}}", from=3-1, to=3-2]
    	\arrow["{=}", from=3-2, to=3-3]
    \end{tikzcd}\,,\]
    which is natural for $X$ in $\mathcal C$.
    This composition shows that the underlying functor of $F$ is completely determined by the object $F(\1)$.
    Let us call this natural isomorphism $\beta^{F(\1)}$.
    If this were only the action of $\mathcal C^{mp}\bt\mathcal C$ on $\mathcal C$, then we would know that
    \begin{enumerate}
        \item $(F(\1),\beta^{F(\1)})$ is an object of $\mathcal Z$, and
        \item all objects of $\mathcal Z$ arise from this construction.
    \end{enumerate}
    In our current setting, (1) is still true, but the fact that we are using $\mathcal C^{mp}\bt_{\mathcal Z}\mathcal C$ means that (2) is no longer true.
    
    The fact that this action respects the $\mathcal Z$-balancing tells us that the map $s_{-,\1}:(-)\blacktriangleright F(\1)\to F((-)\blacktriangleright\1)$ is a $\mathcal Z$-balanced natural transformation.
    This means that, for any $(Z,\beta^{Z})$ in $\mathcal Z$ and any $X,Y$ in $\mathcal C$,
    \[\begin{tikzcd}[ampersand replacement=\&]
    	{Q(X\triangleleft Z,Y)\blacktriangleright F(\1)} \&\& {F\big(Q(X\triangleleft Z,Y)\blacktriangleright\1\big)} \\
    	{Q(X,Z\triangleright Y)\blacktriangleright F(\1)} \&\& {F\big(Q(X,Z\triangleright Y)\blacktriangleright\1\big)}
    	\arrow["{s_{Q(X\triangleleft Z,Y),F(\1)}}", from=1-1, to=1-3]
    	\arrow["{\text{bal}^{F(\1)}_{X,Z,Y}}"', from=1-1, to=2-1]
    	\arrow["{F\big(\text{bal}^{\1}_{X,Z,Y}\big)}", from=1-3, to=2-3]
    	\arrow["{s_{Q(X,Z\triangleright Y),F(\1)}}"', from=2-1, to=2-3]
    \end{tikzcd}\,.\]

    Consider the following diagram, where we have specialized to $X=Y=\1$.
    \[\begin{tikzcd}[ampersand replacement=\&]
    	{Z\otimes F(\1)} \&\& {Q(\1,Z\triangleright\1)\blacktriangleright F(\1)} \\
    	{F(\1)\otimes Z} \&\& {Q(\1\triangleleft Z,\1)\blacktriangleright F(\1)} \\
    	\& {F\big(Q(\1\triangleleft Z,\1)\blacktriangleright \1\big)} \\
    	\& {F\big(Q(\1,Z\triangleright\1)\blacktriangleright \1\big)} \\
    	{Z\otimes F(\1)} \&\& {Q(\1,Z\triangleright\1)\blacktriangleright F(\1)}
    	\arrow[from=1-1, to=1-3]
    	\arrow["{\beta^Z_{F(\1)}}", from=1-1, to=2-1]
    	\arrow["{\big(\text{bal}^{F(\1)}_{\1,Z,\1}\big)^{-1}}", from=1-3, to=2-3]
    	\arrow[from=2-1, to=2-3]
    	\arrow["{\beta^{F(\1)}_Z}", from=2-1, to=5-1]
    	\arrow["s", from=2-3, to=3-2]
    	\arrow["{\text{bal}^{F(\1)}_{\1,Z,\1}}", from=2-3, to=5-3]
    	\arrow["{F\big(\text{bal}^{\1}_{\1,Z,\1}\big)}", from=3-2, to=4-2]
    	\arrow["{s^{-1}}", from=4-2, to=5-3]
    	\arrow[from=5-3, to=5-1]
    \end{tikzcd}\,.\]
    The upper rectangle commutes by the definition of $\text{bal}^{F(\1)}$.
    The lower-left hexagon commutes by the definition of $\beta^{F(\1)}$.
    The lower-right quadrangle commutes by the $\mathcal Z$-balancing property we have just considered, and so the whole diagram commutes.
    The composition of the two left vertical arrows is nothing more than the double-braiding in $\mathcal Z$ for the product $Z\otimes F(\1)$.
    Following the alternative path, we find that this double-braiding must be equal to the identity.

    This computation shows that to every $(F,s)$ in $\mathcal D$, we can associate the object $(F(\1),\beta^{F(\1)})$ in $\mathcal Z_2(\mathcal Z)$.
    It is not hard to see that any object $(T,\beta^T)$ in $\mathcal Z_2(\mathcal Z)$ will give rise to a functor $F:X\mapsto T\otimes X$, and that this is the inverse to the construction above.
\end{proof}

Now let us consider the endomorphisms of the unit in $\mathcal C^{mp}\boxtimes_{\mathcal Z}\mathcal C$.
To understand this object, we will use the calculus of ends.

Let $\Forg:\mathcal Z\to\mathcal C$ be the forgetful functor, and let $I:\mathcal C\to\mathcal Z$ be its right adjoint.
The composition $T:=\Forg\circ I$ is a comonad on $\mathcal C$, and its corresponding category of comodules is equivalent to $\mathcal Z$.
This comonad $T$ can be computed explicitly in terms of an end as follows
\[T(X):=\int_{C\in\mathcal C}C\otimes X\otimes C^*\,.\]
The end comes equipped a collection of maps \[\{w_{C;X}:T(X)\to C\otimes X\otimes C^*\}_{C\in\mathcal C}\,,\] collectively called a wedge, and maps $f:Y\to T(X)$ are completely determined by the collection $\{w_{C;X}\circ f\}$.

\begin{definition}
    The canonical central algebra associated to $\mathcal C$ is $L_{\mathcal C}:=I(\1)$.
\end{definition}

\begin{remark}
    When $\overline{\mathbb K}=\mathbb K$, the canonical central algebra is often called the canonical Lagrangian algebra, because it automatically satisfies $\FPdim(L_{\mathcal C})^2=\FPdim(\mathcal Z)$.
    This desirable property can fail if $\End(\1_{\mathcal C})\neq\mathbb K$ (compare with \cite[Thm. 4.9]{MR4806973}).
    In light of this, we choose to use the adjective central, because although it is no longer Lagrangian, it still has a canonical commutative algebra structure as an object of $\mathcal Z$.
\end{remark}

\begin{proposition}\label{prop:AltEndFormulaForL}
    If $\mathcal C$ is a 2-separable fusion category, then the canonical central algebra can be computed using the alternative formula
    \[L_{\mathcal C}\cong\int_{Z\in\mathcal Z}\mathcal C\big(\Forg(Z),\1\big)\otimes Z\,,\]
    where the tensor product of a vector space $V$ and an object $Z$ is determined by the existence of a natural isomorphism
    \[\mathcal Z(W,V\otimes Z)\cong V\otimes\mathcal Z(W,Z)\,.\]
\end{proposition}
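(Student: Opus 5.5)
Since $L_{\mathcal C}=I(\1)$ and $I$ is right adjoint to $\Forg$, I will argue via Yoneda. For any $W\in\mathcal Z$ we have the adjunction isomorphism
\[\mathcal Z(W,L_{\mathcal C})\cong\mathcal C\big(\Forg(W),\1\big),\]
natural in $W$. The goal is to show that the end
\[E:=\int_{Z\in\mathcal Z}\mathcal C(\Forg(Z),\1)\otimes Z\]
has the same representable functor $\mathcal Z(W,-)$. The first step is existence. Since $\mathcal C$ is 2-separable, $\mathcal Z$ is 1-separable by Proposition \ref{prop:Separable<=>Z is LocSep}, hence semisimple with finitely many simple objects. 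Finite-dimensionality of hom spaces makes each copower $V\otimes Z$ exist, as a direct sum of $\dim V$ copies of $Z$ with the stated universal property. The end over $\mathcal Z$ then reduces to a finite limit, namely an equalizer of finite products, which exists in $\mathcal Z$.

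Next, since $\mathcal Z(W,-)$ preserves limits and commutes with copowers by the defining isomorphism, I compute
\[\mathcal Z(W,E)\cong\int_{Z\in\mathcal Z}\mathcal C(\Forg Z,\1)\otimes\mathcal Z(W,Z).\]
I would then recognise the right-hand side as a co-Yoneda/Yoneda end in $\Vec_{\mathbb K}$. Writing $\mathcal C(\Forg Z,\1)\otimes\mathcal Z(W,Z)\cong\Hom_{\mathbb K}\big(\mathcal Z(W,Z)^*,\mathcal C(\Forg Z,\1)\big)$ uses finite-dimensionality. The end becomes the space of natural transformations $\mathrm{Nat}\big(\mathcal Z(W,-)^*,\mathcal C(\Forg -,\1)\big)$ between contravariant functors $\mathcal Z\to\Vec_{\mathbb K}$.

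The key input here is semisimplicity of $\mathcal Z$, which gives a natural isomorphism $\mathcal Z(W,Z)^*\cong\mathcal Z(Z,W)$. This comes from the nondegenerate trace pairing, or more concretely from decomposing into simples and using that each $\End$ of a simple is a division algebra. With that, the end is $\mathrm{Nat}(\mathcal Z(-,W),\mathcal C(\Forg-,\1))\cong\mathcal C(\Forg W,\1)$ by Yoneda. Everything is natural in $W$, so Yoneda gives $E\cong I(\1)=L_{\mathcal C}$.

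The main obstacle is the duality step $\mathcal Z(W,Z)^*\cong\mathcal Z(Z,W)$, naturally in both variables, over a non-algebraically-closed field. A noncanonical isomorphism is not enough: one needs a nondegenerate pairing $\mathcal Z(W,Z)\otimes\mathcal Z(Z,W)\to\mathbb K$ that is dinatural. I would try the pairing $(f,g)\mapsto\mathrm{Tr}_{\mathbb K}\big(g\circ f\in\End(W)\big)$, i.e. the trace of composition acting on the finite-dimensional algebra $\End(W)$ by left multiplication. Nondegeneracy should follow from 1-separability: $\End(W)$ is a separable $\mathbb K$-algebra, and separable algebras have nondegenerate trace forms after a suitable reduced/separability-idempotent twist. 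If the naive trace fails in positive characteristic, I would instead use the pairing built from the separability idempotent, which is exactly what 1-separability of $\mathcal Z$ provides.
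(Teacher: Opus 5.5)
Your argument is correct, and it takes a genuinely different route from the paper.

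\textbf{The paper's route.} It works on the object side:
\begin{itemize}
\item it decomposes $L_{\mathcal C}$ into isotypic components;
\item it reads off the multiplicities from $\mathcal Z(Z,L_{\mathcal C})\cong\mathcal C(\Forg Z,\1)$ for simple $Z$;
\item it rewrites $L_{\mathcal C}\cong\bigoplus_{Z\in\Irr(\mathcal Z)}\mathcal C(\Forg Z,\1)\otimes_{\End(Z)}Z$;
\item it identifies this balanced tensor product (a coinvariant, coend-type construction) with the end (an invariant subobject) by appealing to separability of each $\End(Z)$.
\end{itemize}

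\textbf{Your route.} You work on the hom side. You compute $\mathcal Z(W,-)$ of the end, and use a duality $\mathcal Z(W,Z)^*\cong\mathcal Z(Z,W)$, natural in $Z$, to turn it into a space of natural transformations to which Yoneda applies. This gives a direct proof of the end-versus-coend identification that the paper handles in one sentence. It also shows that once $\mathcal Z$ is known to be semisimple, nothing further is needed. The paper's route, in exchange, gives an explicit simple-by-simple model of $L_{\mathcal C}$ with its multiplicities.

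\textbf{Your ``main obstacle''.} It is easier than you fear, but you should drop your first choice of pairing.
\begin{itemize}
\item The regular trace on $\End(W)$ can vanish identically in characteristic $p$: take $W=X^{\oplus p}$ with $\End(X)=\mathbb K$.
\item It is not compatible across objects. For the inclusion $k:X\to X\oplus X$ and projection $q:X\oplus X\to X$, the regular traces give $\dim\End(X)$ on $q\circ k\in\End(X)$ but $2\dim\End(X)$ on $k\circ q\in\End(X\oplus X)$. So your claim that ``everything is natural in $W$'' fails for this choice.
\end{itemize}

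\textbf{The fix.} All you need is, for each fixed $W$, a linear functional $t$ on $\End(W)$ such that $(x,y)\mapsto t(xy)$ is nondegenerate.
\begin{itemize}
\item The pairing $(f,g)\mapsto t(g\circ f)$ is then automatically natural in $Z$.
\item It is nondegenerate for every $Z$. By semisimplicity, for $f\neq0$ the composites $g\circ f$ with $g\in\mathcal Z(Z,W)$ form a nonzero left ideal of $\End(W)$, on which $t$ cannot vanish. The argument for $g\neq0$ is symmetric.
\item Such a $t$ exists for every finite dimensional semisimple algebra: on each block $M_n(D)$, compose the matrix trace with any nonzero functional on $D$. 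So no separability idempotent is needed.
\end{itemize}

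\textbf{Naturality in $W$.} You can skip it entirely. Your computation gives $\dim\mathcal Z(S,E)=\dim\mathcal Z(S,L_{\mathcal C})$ for every simple $S$. In a semisimple category this already forces $E\cong L_{\mathcal C}$.
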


\begin{proof}
    By semisimplicity of $\mathcal Z$, we can decompose $\mathcal L_{\mathcal C}$ into isotypic components
    \[L_{\mathcal C}\cong\bigoplus_{W\in\Irr(\mathcal Z)}W^{\oplus n_W}\,.\]
    Next, we can compute that
    \begin{align*}
        \End(Z)^{\oplus n_Z}&\cong\mathcal Z\Big(Z\;,\bigoplus_{W\in\Irr(\mathcal Z)}W^{\oplus n_W}\Big)\\
        &\cong\mathcal Z\big(Z,L_{\mathcal C}\big)\\
        &\cong\mathcal Z\big(Z,I(\1)\big)\\
        &\cong\mathcal C\big(\Forg(Z),\1\big)
    \end{align*}
    Since $\End(Z)$ is a finite dimensional separable algebra over $\Omega\mathcal Z:=\End(\1_{\mathcal Z})$, we can build a copy of $\End(Z)$ as an internal algebra whose underlying object is $\1^{\oplus k}$, for some $k$.
    From this it is immediate that
    \begin{align*}
    L_{\mathcal C}&\cong\bigoplus_{Z\in \Irr(\mathcal Z)}Z^{\oplus n_Z}\\
    &\cong\bigoplus_{Z\in \Irr(\mathcal Z)}\1^{\oplus n_Z}\otimes Z\\
    &\cong\bigoplus_{Z\in \Irr(\mathcal Z)}\End(Z)^{\oplus n_Z}\mathop{\otimes}\limits_{\End(Z)} Z\\
    &\cong\bigoplus_{Z\in \Irr(\mathcal Z)}\mathcal C\big(\Forg(Z),\1\big)\mathop{\otimes}\limits_{\End(Z)} Z\,.
    \end{align*}
    This last expression is isomorphic to the desired end.
    Normally, the defining equations for the end would carve out a maximal balanced subobject, and a coend would take the maximal balanced quotient.
    By Proposition \ref{prop:Separable<=>Z is LocSep}, our hypothesis that $\mathcal C$ is 2-separable implies that each of the algebras $\End(Z)$ is separable, and so these quotients and subobjects are the same up to isomorphism.
\end{proof}

Following \cite[Rem. 3.9]{MR2677836}, we can express $\mathcal C^{mp}\boxtimes_{\mathcal Z}\mathcal C$ as the category of modules for an internal algebra $A$ in $\mathcal C^{mp}\boxtimes\mathcal C$.
Since this algebra is built using a right adjoint, the correct formula for this algebra is
\[A:=\int_{Z\in\mathcal Z}\Forg(Z^*)\boxtimes\Forg(Z)\,.\]
In the event where $\End(Z)\cong\mathbb K$ for every simple object, this formula degenerates to the direct sum as indicated in the cited remark.

\begin{theorem}\label{thm:LoopsAlgebraIso}
    For $\mathcal C$ a 2-separable fusion category, the action of $\mathcal C^{mp}\boxtimes_{\mathcal Z}\mathcal C$ on $\mathcal C$ induces an algebra isomorphism
    \[\Omega\Big(\mathcal C^{mp}\bt_{\mathcal Z}\mathcal C\Big)\cong\End(\Id_{\mathcal C})\,.\] 
\end{theorem}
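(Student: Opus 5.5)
The plan is to produce a linear isomorphism by hand, using the end formulas of the previous pages, and then check that it agrees with the map induced by the action. That map is automatically an algebra homomorphism. The unit of $\mathcal C^{mp}\bt_{\mathcal Z}\mathcal C$ is $Q(\1,\1)$, and under the identification (following \cite[Rem. 3.9]{MR2677836}) of $\mathcal C^{mp}\bt_{\mathcal Z}\mathcal C$ with $A$-modules in $\mathcal C^{mp}\bt\mathcal C$ it corresponds to the free module $A$. Hence
\[\Omega\Big(\mathcal C^{mp}\bt_{\mathcal Z}\mathcal C\Big)\cong\End_A(A)\cong(\mathcal C^{mp}\bt\mathcal C)\big(\1\bt\1,A\big)\,.\]
The action map is simpler to describe. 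Since $Q(\1,\1)\blacktriangleright(-)=\Id_{\mathcal C}$, any $f\in\Omega(\mathcal C^{mp}\bt_{\mathcal Z}\mathcal C)$ yields the natural endomorphism $f\blacktriangleright(-)$ of $\Id_{\mathcal C}$. Because $\blacktriangleright$ is monoidal, this is an algebra map $\Omega(\mathcal C^{mp}\bt_{\mathcal Z}\mathcal C)\to\End(\Id_{\mathcal C})$. What remains is to show it is bijective.

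For bijectivity I will compare dimensions, or better, build an explicit chain of linear isomorphisms. Starting from $A=\int_{Z\in\mathcal Z}\Forg(Z^*)\bt\Forg(Z)$, and using that $\mathrm{Hom}(\1\bt\1,-)$ commutes with ends and that $(\mathcal C^{mp}\bt\mathcal C)(\1\bt\1,X\bt Y)\cong\mathcal C(\1,X)\otimes\mathcal C(\1,Y)$, we get
\[(\mathcal C^{mp}\bt\mathcal C)(\1\bt\1,A)\cong\int_{Z}\mathcal C\big(\1,\Forg(Z)^*\big)\otimes\mathcal C\big(\1,\Forg(Z)\big)\cong\int_Z\mathcal C\big(\Forg(Z),\1\big)\otimes\mathcal C\big(\1,\Forg(Z)\big)\,.\]
By Proposition \ref{prop:AltEndFormulaForL}, together with the fact that the exact functor $\mathcal C(\1,\Forg(-))$ commutes with these finite ends, the right-hand side is $\mathcal C(\1,\Forg(L_{\mathcal C}))=\mathcal C(\1,T(\1))$. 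On the other side, the end formula for the comonad gives
\[\mathcal C\big(\1,T(\1)\big)\cong\int_{C\in\mathcal C}\mathcal C(\1,C\otimes C^*)\cong\int_{C\in\mathcal C}\End(C)\cong\End(\Id_{\mathcal C})\,.\]
Composing these yields a linear isomorphism $\Omega(\mathcal C^{mp}\bt_{\mathcal Z}\mathcal C)\cong\End(\Id_{\mathcal C})$. Throughout, 2-separability (through Proposition \ref{prop:Separable<=>Z is LocSep}) guarantees that the ends over $\mathcal Z$ are computed by split subobjects, so every step is an honest isomorphism of finite-dimensional spaces.

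The main obstacle is the last step: showing that this composite isomorphism is exactly the action map $f\mapsto f\blacktriangleright(-)$, or at least that the two maps differ by an isomorphism. My approach is to trace a single wedge component. An element of $(\mathcal C^{mp}\bt\mathcal C)(\1\bt\1,A)$ is a family of maps $\1\bt\1\to\Forg(Z^*)\bt\Forg(Z)$ compatible in $Z$. Acting on $W\in\mathcal C$, such a family gives $W\to\Forg(Z)\otimes W\otimes\Forg(Z^*)$, and composing with the half-braiding and evaluation returns a map $W\to W$. I will check that this recipe coincides with transporting the element through the chain above and evaluating the wedge $w_{C;\1}$ at $C=W$. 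This comparison uses the description of $\text{bal}^W$ via half-braidings from the construction of $\blacktriangleright$. If the direct identification becomes unwieldy, the fallback is to prove only that the action map is injective: a nonzero $f$ acts nontrivially on some simple summand of $\blacktriangleright$ applied to a generator. Injectivity together with the dimension equality from the chain then gives bijectivity.
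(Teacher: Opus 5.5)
Your setup is the same as the paper's. You identify $\Omega(\mathcal C^{mp}\bt_{\mathcal Z}\mathcal C)$ with $(\mathcal C^{mp}\bt\mathcal C)(\1\bt\1,A)$, note that $f\mapsto f\blacktriangleright(-)$ is an algebra map, and run the end-calculus chain to get a linear isomorphism, which gives equal finite dimensions. The gap is exactly the step you flag as the main obstacle, and neither of your two options closes it.

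Route (a) is only announced, and it is not routine. An element of the end over $\mathcal Z$ is a compatible family, and your recipe produces one endomorphism of $W$ for each $Z$, with no rule for combining them. Matching this with the transported element would require the end/coend identification coming from separability of the algebras $\End(Z)$, normalizations included.

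The fallback (b) does not work as stated. The claim that every nonzero $f$ acts nontrivially amounts to faithfulness of the action. The paper obtains that as Corollary~\ref{cor:HardFaithfulness} \emph{from} this theorem, not the other way round. It is not a formal fact about module categories. For the action of $\mathcal C\bt\mathcal C^{mp}$ on $\mathcal C$, which has no $\mathcal Z$-balancing, some summand of $\1\bt\1$ acts by zero whenever $\Omega\mathcal Z(\mathcal C)\neq\mathbb K$ (Lemma~\ref{lem:main faithfulness lemma}; e.g.\ $\Vec_{\mathbb C}$ over $\mathbb R$). So an injectivity proof would have to use the $\mathcal Z$-balancing essentially, and you give none.

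The missing idea is to prove surjectivity rather than injectivity, since that only requires evaluating the action on well-chosen elements.
\begin{enumerate}
\item For $\zeta\in\End(\Id_{\mathcal C})$, the maps $(\zeta_X\otimes\id_{X^*})\circ\mathrm{coev}_X$ form a wedge. They therefore define $\zeta^\vee:\1\to\Forg(L_{\mathcal C})=T(\1)$.
\item Take the element $w_{\1;\1}\otimes\zeta^\vee$ of $\int_Z\mathcal C(\Forg(Z),\1)\otimes\mathcal C(\1,\Forg(Z))$, supported at the single component $Z=L_{\mathcal C}$.
\item Write the half-braiding of $L_{\mathcal C}$ using the comonad comultiplication $\Delta$ and the wedge $w_{X;L_{\mathcal C}}$. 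Naturality of the wedge and the counit axiom for $T$ then collapse the action on $X$ to $(\id_X\otimes\mathrm{ev}_X)\circ\big((w_{X;\1}\circ\zeta^\vee)\otimes\id_X\big)=\zeta_X$.
\end{enumerate}
Thus the algebra map $\Omega(\mathcal C^{mp}\bt_{\mathcal Z}\mathcal C)\to\End(\Id_{\mathcal C})$ has a section, so it is surjective. Your dimension count then makes it an isomorphism, and this is exactly how the paper concludes.
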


\begin{proof}
    The algebra $A$ is a commutative, central algebra in $\mathcal C^{mp}\boxtimes\mathcal C$, and thus the category of modules for $A$ inherits a monoidal product coming from $\otimes_A$.
    Under the identification that $(\mathcal C^{mp}\bt\mathcal C)_A\simeq\mathcal C^{mp}\boxtimes_{\mathcal Z}\mathcal C$, the free module functor is monoidal, and $A$ corresponds to the monoidal unit in $\mathcal C^{mp}\boxtimes_{\mathcal Z}\mathcal C$.
    A direct computation using end calculus shows that, as vector spaces,
    \begin{align*}
        \End(\Id_{\mathcal C})&\cong\int_{C\in\mathcal C}\mathcal C(C,C)\\
        &\cong\int_{C\in\mathcal C}\mathcal C(\1,C\otimes C^*)\\
        &\cong\mathcal C\Big(\1,\int_{C\in\mathcal C}C\otimes C^*\Big)\\
        &\cong\mathcal C\Big(\1,\Forg(L_{\mathcal C})\Big)\\
        &\cong\mathcal C\bigg(\1,\Forg\Big(\int_{Z\in\mathcal Z}\mathcal C\big(\Forg(Z),\1\big)\otimes Z\Big)\bigg)\\
        &\cong\mathcal C\bigg(\1,\int_{Z\in\mathcal Z}\mathcal C\big(\Forg(Z),\1\big)\otimes \Forg(Z)\bigg)\\
        &\mathop{\cong}\limits^{\star}\int_{Z\in\mathcal Z}\mathcal C\big(\Forg(Z),\1\big)\otimes\mathcal C\big(\1, \Forg(Z)\big)\\
        &\cong\int_{Z\in\mathcal Z}\mathcal C^{op,mp}\big(\Forg(Z)^*,\1^*\big)\otimes\mathcal C\big(\1, \Forg(Z)\big)\\
        &\cong\int_{Z\in\mathcal Z}\mathcal C^{mp}\big(\1,\Forg(Z^*)\big)\otimes\mathcal C\big(\1, \Forg(Z)\big)\\
        &\cong\int_{Z\in\mathcal Z}\mathcal C^{mp}\boxtimes\mathcal C\big(\;\1\boxtimes\1\,,\,\Forg(Z^*)\boxtimes\Forg(Z)\;\big)\\
        &\cong\mathcal C^{mp}\boxtimes\mathcal C\bigg(\;\1\boxtimes\1\,,\,\int_{Z\in\mathcal Z}\Forg(Z^*)\boxtimes\Forg(Z)\;\bigg)\\
        &\cong\mathcal C^{mp}\boxtimes\mathcal C\big(\;\1\boxtimes\1\,,\,A\;\big)\\
        &\cong(\mathcal C^{mp}\boxtimes\mathcal C)_A\big(A\,,\,A\big)\\
        &\cong\mathcal C^{mp}\bt_{\mathcal Z}\mathcal C\big(\1\,,\,\1\big)\,=:\Omega\big(\mathcal C^{mp}\bt_{\mathcal Z}\mathcal C\big)\;.
    \end{align*}
    Here we have used Proposition \ref{prop:AltEndFormulaForL} and the fact that the dual functor $(-)^*:\mathcal C\to\mathcal C^{op,mp}$ is an equivalence that reverses both composition and tensor product.

    \begin{remark}
        The vector space $\int_{Z\in\mathcal Z}\mathcal C\big(\Forg(Z),\1\big)\otimes\mathcal C\big(\1, \Forg(Z)\big)$ (indicated above with a $\star$) corresponds to the so-called ladder diagram formalism used in \cite{MR3975865} to compute morphism spaces in relative tensor products.
    \end{remark}

    A typical vector in $\int_{Z\in\mathcal Z}\mathcal C\big(\Forg(Z),\1\big)\otimes\mathcal C\big(\1, \Forg(Z)\big)$ looks like a sum
    \[v=\sum_{Z,i}a_i^Z\otimes b_i^Z\,.\]
    Such a vector corresponds to some $\Tilde{v}$ in $\Omega(\mathcal C^{mp}\boxtimes_{\mathcal Z}\mathcal C)$, and the action of $\Tilde{v}$ on an object $X\in\mathcal C$ is
    \[\Tilde{v}\blacktriangleright\id_X\;=\;\sum_i(\id_X\triangleleft a_i^Z)\circ\beta^Z_{X}\circ(b_i^Z\triangleright\id_X)\,.\]

    It is clear from the action formula given above that $\{f\blacktriangleright\id_X\}_{X\in\mathcal C}$ is a natural transformation of $\Id_{\mathcal C}$, and this construction respects composition because $\blacktriangleright$ is functorial.
    
    For a given natural transformation $\zeta:\Id_{\mathcal C}\to\Id_{\mathcal C}$, the maps
    \[(\zeta_X\otimes\id_{X^*})\circ\mathrm{coev}_X\]
    are $\End(X)$-balanced, and therefore determine a morphism $\zeta^\vee:\1\to L_{\mathcal C}$.

    The half-braiding on $L_{\mathcal C}$ is determined by the formula
    \[\beta^{L_{\mathcal C}}_X:=(\id_X\otimes\id_{L_{\mathcal C}}\otimes\mathrm{ev}_X)\circ(w_{X;L_{\mathcal C}}\otimes\id_X)\circ(\Delta_{\1}\otimes \id_X)\,\]
    where $\Delta:T\to T^2$ is the coproduct for the comonad $T=\Forg\circ I$, and $w_{X,L}$ is the wedge for the end that defines $T$.

    To $\zeta$, we can now associate the vector
    \[w_{\1;\1}\otimes\zeta^\vee\in\int_{Z\in\mathcal Z}\mathcal C\big(\Forg(Z),\1\big)\otimes\mathcal C\big(\1, \Forg(Z)\big)\,.\]
    The action of this vector on $\id_X$ is then
    \begin{gather*}
        (\id_X\triangleleft w_{\1;\1})\circ\beta^L_{X}\circ(\zeta^\vee\triangleright\id_X)\\
        =(\id_X\otimes w_{\1;\1}\otimes\mathrm{ev}_X)\circ \big((w_{X;L_{\mathcal C}}\circ\Delta_\1\circ\zeta^\vee)\otimes\id_X\big)\\
        =(\id_X\otimes\mathrm{ev}_X)\circ\big((w_{X;\1}\circ\zeta^\vee)\otimes\id_X\big)\\
        =(\id_X\otimes\mathrm{ev}_X)\circ(\zeta_X\otimes\id_{X^*}\otimes\id_X)\circ(\mathrm{coev}_X\otimes\id_X)\\
        =\zeta_X\,.
    \end{gather*}
    Thus we have two algebras that are (1) isomorphic as vector spaces, (2) there is an algebra map in one direction, and (3) that algebra map has a section.
    Taken together, these facts imply the result.
\end{proof}

The following consequence will be used in the proof of Theorem \ref{thm:main invertibility theorem}.

\begin{corollary}\label{cor:HardFaithfulness}
    The action of $\mathcal C^{mp}\boxtimes_{\mathcal Z}\mathcal C$ on $\mathcal C$ is faithful.
\end{corollary}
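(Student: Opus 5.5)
The plan is to reduce faithfulness to a statement about the endomorphism algebra of the unit, and then apply Theorem \ref{thm:LoopsAlgebraIso}. Write $\mathcal D:=\mathcal C^{mp}\bt_{\mathcal Z}\mathcal C$.

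First I check that $\mathcal D$ is a multifusion category, so that the usual structure theory is available. By the proposition at the end of Section \ref{sec:The 4-cat Mor2}, $\mathcal D$ is 1-separable, provided $\mathcal Z$ is 2-separable. I expect this to follow from $\mathcal Z(\mathcal Z)\simeq\mathcal C\bt\mathcal C^{mp}$ being 1-separable, via Proposition \ref{prop:Separable<=>Z is LocSep}. Rigidity of $\mathcal D$ is inherited from $\mathcal C^{mp}\bt\mathcal C$, since $\mathcal D$ is the category of modules over the commutative central algebra $A$.

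Next I use the standard criterion for faithfulness over a multifusion category. Decompose the unit as $\1_{\mathcal D}\cong\bigoplus_e\1_e$ according to the primitive central idempotents $e$ of $\Omega(\mathcal D)$. Every simple $Y\in\mathcal D$ lies in some $\1_e\otimes\mathcal D\otimes\1_f$. By rigidity, $\1_e$ is a summand of $Y\otimes Y^*$. Suppose $Y\blacktriangleright(-)$ were the zero functor on $\mathcal C$. Then $(Y\otimes Y^*)\blacktriangleright(-)\cong Y\blacktriangleright\big(Y^*\blacktriangleright(-)\big)$ would also be zero, and hence so would $\1_e\blacktriangleright(-)$. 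Consequently it suffices to show that each $\1_e$ acts by a nonzero functor.

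Now the projection $\1_{\mathcal D}\to\1_e\to\1_{\mathcal D}$ is the nonzero idempotent $e\in\Omega(\mathcal D)$. Its image under the action is the natural transformation $\{e\blacktriangleright\id_X\}_{X\in\mathcal C}$ in $\End(\Id_{\mathcal C})$. The component $e\blacktriangleright\id_X$ is exactly the idempotent on $X=\1_{\mathcal D}\blacktriangleright X$ that projects onto $\1_e\blacktriangleright X$. By Theorem \ref{thm:LoopsAlgebraIso}, the map $\Omega(\mathcal D)\to\End(\Id_{\mathcal C})$ is an algebra isomorphism, in particular injective, so $e\blacktriangleright\id_X\neq0$ for some $X$. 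Hence $\1_e\blacktriangleright X\neq0$, which gives faithfulness.

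The main obstacle is the bookkeeping in the first step, namely that $\mathcal D$ is genuinely multifusion so that the decomposition of $\1_{\mathcal D}$ and the rigidity argument apply. If establishing 2-separability of $\mathcal Z$ proves awkward, I would instead argue directly in $(\mathcal C^{mp}\bt\mathcal C)_A$. There, rigidity and 1-separability follow from separability of $A$, via Theorem \ref{thm:LocSep&Sep=>Sep} applied inside the 2-separable category $\mathcal C^{mp}\bt\mathcal C$. The key input in either case is the injectivity coming from Theorem \ref{thm:LoopsAlgebraIso}.
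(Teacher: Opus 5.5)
Your main argument is the one the paper intends. The paper states the corollary without proof, as an immediate consequence of Theorem \ref{thm:LoopsAlgebraIso}. Your reduction is exactly the faithfulness criterion used in the proof of Lemma \ref{lem:main faithfulness lemma}: it suffices that each simple summand $\1_e$ of the unit acts nontrivially, and this follows by applying the injective map $\Omega(\mathcal D)\to\End(\Id_{\mathcal C})$ to the idempotent $e$.

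The gap is in your preliminary step. You do need it, since semisimplicity of $\mathcal D$ is what makes $\1_e$ a summand of $Y\otimes Y^*$, but neither of your justifications works.

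\begin{itemize}
\item The equivalence $\mathcal Z(\mathcal Z)\simeq\mathcal C\bt\mathcal C^{mp}$ is false. For $\mathcal C=\Vec_{\mathbb C}(\mathbb Z/2\mathbb Z)$ the left side has $16$ simple objects and the right side has $4$. What Morita invariance of the center actually yields is $\mathcal Z(\mathcal Z)\simeq\mathcal Z(\mathcal C\bt\mathcal C^{mp})$. That argument needs $\mathcal C\bt\mathcal C^{mp}$ to act faithfully on $\mathcal C$, i.e.\ $\Omega\mathcal Z=\mathbb K$ by Lemma \ref{lem:main faithfulness lemma}.
\item Your fallback is circular. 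Theorem \ref{thm:LocSep&Sep=>Sep} takes 1-separability of the module category as its hypothesis and concludes separability of $A$. It therefore cannot produce 1-separability of $(\mathcal C^{mp}\bt\mathcal C)_A$.
\end{itemize}

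There is a correct fix for the only place the corollary is used, the proof of Theorem \ref{thm:main invertibility theorem}, where $\Omega\mathcal Z=\mathbb K$.

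\begin{itemize}
\item Corollary \ref{cor:LoopsZ=K Makes Z nondegenerate and separable} shows that $\mathcal Z$ is 2-separable, and its proof does not depend on the present corollary.
\item The proposition at the end of Section \ref{sec:The 4-cat Mor2} then makes $\mathcal D$ 1-separable.
\item Rigidity follows because every object of $\mathcal D$ is a summand of a finite sum of objects $Q(X,Y)$. These are dualizable since $Q$ is monoidal.
\end{itemize}

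Without the hypothesis $\Omega\mathcal Z=\mathbb K$, you would need a separate argument that $\mathcal Z$ is 2-separable. The paper also leaves this point implicit.
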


\section{A Galois theory computation and its consequences}

We are concerned with the action of $\mathcal C\bt\mathcal C^{mp}$ on $\mathcal C$, and whether or not this action is faithful.
Here we will use Galois theory to show that this faithfulness is controlled by the field $\Omega\mathcal Z\mathcal C$.

Consider a fusion category $\mathcal C$ over $\mathbb K$.
In general, $\Omega\mathcal C:=\End(\1)$ is a field extension of $\mathbb K$ which, for brevity's sake, we will denote by $\mathbb L$.
Our separability assumptions force $\mathbb L/\mathbb K$ to be a separable extension, and therefore the primitive element theorem supplies an element $\theta\in\mathbb L$ such that $\mathbb L=\mathbb K(\theta)$.

If $\mathbb L=\mathbb K$, then $\mathcal C\bt\mathcal C^{mp}$ is automatically fusion, and therefore the action is automatically faithful, so we shall assume that $\theta\notin\mathbb K$.

\begin{example}
    Consider the category $\Vec_{\mathbb C}$, thought of as a fusion category over $\mathbb K=\mathbb R$.
    The object $\1\bt\1$ in $\mathcal C\bt\mathcal C^{mp}$ decomposes as a sum of projections $\1\bt\1\cong U_1\oplus U_2$.
    This happens because
    \[\End(\1\bt\1)\cong\mathbb C\otimes_{\mathbb R}\mathbb C\;\cong\;\mathbb C\oplus\mathbb C\,.\]
    Unitality of the action implies that
    \[\1\cong\1\triangleleft(\1\bt\1)\cong (\1\triangleleft U_1)\oplus(\1\triangleleft U_2)\,.\]
    Since $\1\in\Vec_{\mathbb C}$ is simple, it follows that one of these summands must be zero, and therefore the action must not be faithful.

    This is more or less the behavior that we can expect from a Galois extension, but in general we must consider non-normal extensions as well.
\end{example}

The fact that $\mathcal C$ is 1-separable implies that $\1\bt\1\in\mathcal C\bt\mathcal C^{mp}$ decomposes as a direct sum of simple objects $U_j$.
These summands are projections in the fusion algebra of $\mathcal C\bt\mathcal C^{mp}$, and can be computed as the images of minimal idempotents in $\End(\1\bt\1)$.
To compute these idempotents, suppose $f(x)\in\mathbb K[x]$ is the minimal polynomial for $\theta$, and let $\prod_{j=1}^mf_j(x)$ be the factorization of $f(x)$ into indecomposable polynomials in $\mathbb L[x]$.
Then it follows that
\begin{align*}
    \mathbb L\otimes_{\mathbb K}\mathbb L&\cong\mathbb L\otimes_{\mathbb K}\mathbb K(\theta)\\
    &\cong L\otimes_{\mathbb K}\mathbb K[x]/\langle f(x)\rangle\\
    &\cong L[x]/\langle f(x)\rangle\\
    &\cong \prod_{j=1}^mL[x]/\langle f_j(x)\rangle\\
\end{align*}

Let $\mathbb E$ be a splitting field for $f$ over $\mathbb K$, and let $V(f):=\{\theta_i\}_{i=1}^n$ be the set of all conjugates of $\theta=\theta_1$ in $\mathbb E$.
It follows that there is a surjective function $J:\{i\}_{i=1}^n\twoheadrightarrow\{j\}_{j=1}^m$ defined by the rule that $\theta_i$ is a root of $f_{J(i)}(x)$.

\begin{remark}
    If $\mathbb L/\mathbb K$ is normal (and hence Galois), then each $f_j(x)$ has degree 1, and so $n=m$ and the map $J$ is just the identity.
    The reader should keep this case in mind for the rest of this section as a sanity check while digesting the proofs that follow.
    The necessity of complexity in the proof of Lemma \ref{lem:main faithfulness lemma} is primarily due to the existence of non-normal field extensions... which is due to the fact that groups can have non-normal subgroups... which is either happy or sad, depending on your point of view.
\end{remark}

The following polynomials will be an important tool.
\begin{definition}
    \[p_i(x):=\prod_{k\neq i}\frac{x-\theta_k}{\theta_i-\theta_k}\;\in\mathbb E[x]\]
    \[P_j(x):=\sum_{i\in J^{-1}(j)}p_i(x)\;\in\mathbb L[x]\]
\end{definition}

\begin{proposition}\label{prop:p-i and P-j}
    The following properties hold:
    \begin{enumerate}
        \item $p_i(\theta_k)=\delta_{i,k}$,
        \item $P_j(\theta_k)=\delta_{j,J(k)}$, and
        \item $\sum_{i=1}^np_i(x)\;=\;\sum_{j=1}^mP_j(x)\;=\;1$
    \end{enumerate}
\end{proposition}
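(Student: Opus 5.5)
The plan is to treat the three claims in order, since each feeds the next. Throughout, $\mathbb L/\mathbb K$ is separable, so $f$ is separable and the conjugates $\theta_1,\dots,\theta_n$ in $\mathbb E$ are pairwise distinct. This is exactly what makes the denominators $\theta_i-\theta_k$ in $p_i$ nonzero.

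For (1) we evaluate the Lagrange basis polynomial directly. If $k\neq i$, the factor $(x-\theta_k)$ appears in the product and vanishes at $\theta_k$. If $k=i$, every factor equals $(\theta_i-\theta_k)/(\theta_i-\theta_k)=1$. For (2) we sum (1) over $i\in J^{-1}(j)$. This gives $P_j(\theta_k)=\sum_{i\in J^{-1}(j)}\delta_{i,k}$, which is $1$ exactly when $k\in J^{-1}(j)$, i.e.\ when $J(k)=j$, and $0$ otherwise.

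For (3), the polynomial $\sum_i p_i(x)-1$ has degree at most $n-1$ and, by (1), vanishes at the $n$ distinct points $\theta_k$. Hence it is identically zero. The second equality holds because the fibres $J^{-1}(j)$ partition $\{1,\dots,n\}$ (as $J$ is a surjective function), so $\sum_j P_j=\sum_i p_i$.

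The one remaining point, and the only real obstacle, is the claim implicit in the definition that $P_j\in\mathbb L[x]$ rather than merely $\mathbb E[x]$. I would handle it by writing $p_i(x)=\dfrac{f(x)}{(x-\theta_i)f'(\theta_i)}$. Then
\[
P_j(x)=f(x)\sum_{i\in J^{-1}(j)}\frac{1}{(x-\theta_i)f'(\theta_i)} .
\]
Next, put $g(x):=f(x)/f_j(x)\in\mathbb L[x]$. Since $f$ is separable, $g$ and $f_j$ are coprime in $\mathbb L[x]$. The degree-$<n$ interpolant of the function equal to $1$ on the roots of $f_j$ and $0$ on the roots of $g$ is the unique $P\in\mathbb E[x]$ with $\deg P<n$, $P\equiv 1 \pmod{f_j}$, and $P\equiv 0\pmod{g}$. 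The Chinese remainder theorem in $\mathbb L[x]$ produces such a $P$ with coefficients in $\mathbb L$. By uniqueness of interpolation over $\mathbb E$, this $P$ coincides with $P_j$, so $P_j\in\mathbb L[x]$.

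Equivalently, one can argue via Galois invariance. Any $\sigma\in\Gal(\mathbb E/\mathbb L)$ permutes the roots of each $f_j$, hence preserves each fibre $J^{-1}(j)$ and fixes $P_j$. Combined with separability of $\mathbb E/\mathbb L$, this gives $P_j\in\mathbb L[x]$.
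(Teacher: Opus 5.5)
Your proof of (1)--(3) is correct and follows the paper's argument: direct evaluation for (1), summing (1) over the fibres of $J$ for (2), and for (3) the fact that Lagrange interpolation reproduces the constant polynomial $1$ (your degree count is what makes this precise), followed by regrouping. Your extra check that $P_j\in\mathbb L[x]$ is a useful addition, since the paper only asserts this in the definition and later relies on it in Lemma~\ref{lem:orbit decomposition}; the CRT version is complete as written, but the Galois-invariance version needs $\mathbb E/\mathbb L$ to be Galois, not merely separable (it is Galois, because $\mathbb E$ is a splitting field of the separable polynomial $f$).
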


\begin{proof}
    Property (1) is immediate from the defining formula for $p_i$, and property (2) follows from property (1).
    The $p_i$ are precisely the summands that are produced from Lagrange interpolation of the constant function 1, at the test points $x=\theta_i$.  Since 1 is a polynomial, Lagrange interpolation reproduces the function exactly.
    The other sum is just a regrouping of the first.
\end{proof}

Let us set $\Gamma=\Gal(\mathbb E/\mathbb K)$, and $G=\Gal(\mathbb E/\mathbb L)$.
Following Kuperberg's argument in \cite{kuperbergFiniteConnectedSemisimple2002}, we can assume that all the left embeddings $\lambda_X$ are inclusions, and we can extend all the right embeddings $\rho_X$ to elements of $\Gamma$, which by abuse of notation, we will also write as $\rho_x$.

As in the proof of \cite[Thm 4.1]{kuperbergFiniteConnectedSemisimple2002}, we can form the union of double cosets
\[H:=\bigcup_{X\in\Irr(\mathcal C)}G\rho_XG\,\]
and this $H\subseteq\Gamma$ happens to be a group, because $\mathcal C$ is a rigid monoidal category\footnote{Kuperberg's argument works more generally for bicategories, but this level of generality is not necessary for our current purposes.}.

\begin{proposition}\label{prop:fixed field}
    $\Fix(H)=\Omega\mathcal Z(\mathcal C)$.
\end{proposition}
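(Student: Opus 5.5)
The plan is to identify both sides as subfields of $\mathbb L=\Omega\mathcal C$ cut out by the same condition, namely that the left and right actions of a scalar agree on every simple object.

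\emph{Step 1: describe $\Omega\mathcal Z(\mathcal C)$ inside $\mathbb L$.} The unit of $\mathcal Z$ is $(\1,\beta^{\1})$, where $\beta^{\1}_X$ is the trivial half-braiding built from the unitors. Its endomorphisms are the $a\in\End_{\mathcal C}(\1)=\mathbb L$ that commute with this half-braiding. Unwinding the unitors, this condition says exactly that $a\otimes\id_X=\id_X\otimes a$ in $\End(X)$ for every $X\in\mathcal C$. By naturality and additivity it is enough to check this on simple objects.

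For simple $X$, $\End(X)$ is a division algebra, by 1-separability and Schur's lemma. The maps $a\mapsto \id_X\otimes a$ and $a\mapsto a\otimes \id_X$ are two field embeddings of $\mathbb L$ into the center of $\End(X)$. These are the embeddings $\lambda_X$ and $\rho_X$ of Kuperberg's setup; I will fix which is left and which is right to match the paper's convention. Their images lie in the center because $\id_X\otimes a$ commutes with every $f\otimes\id_{\1}$ by the interchange law. So Step 1 yields
\[\Omega\mathcal Z(\mathcal C)=\{a\in\mathbb L:\lambda_X(a)=\rho_X(a)\ \text{for all }X\in\Irr(\mathcal C)\}.\]

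\emph{Step 2: transport to Galois theory.} Following Kuperberg, we embed the relevant fields into $\mathbb E$ so that each $\lambda_X$ is the inclusion $\mathbb L\subseteq\mathbb E$. Then $\rho_X$ becomes a $\mathbb K$-embedding $\mathbb L\to\mathbb E$, which we extend to an element of $\Gamma$. The condition in Step 1 then reads $\rho_X(a)=a$.

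The point to verify is that this reformulation does not depend on the choices made. The choice of extension of $\rho_X$ is harmless: two extensions differ by precomposition with an element of $G$, which fixes $\mathbb L$ pointwise. The identification of $Z(\End(X))$ with a subfield of $\mathbb E$ is also harmless, since it is injective, so equality $\lambda_X(a)=\rho_X(a)$ is preserved. One subtlety is that the compositum of $\lambda_X(\mathbb L)$ and $\rho_X(\mathbb L)$ inside $Z(\End(X))$ must embed into $\mathbb E$. This holds because that compositum is generated over $\mathbb K$ by roots of $f$, and $\mathbb E$ is a splitting field for $f$. I expect this bookkeeping to be the main obstacle, and I would lean on Kuperberg's argument as cited.

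\emph{Step 3: compare with $\Fix(H)$.} Since $\mathbf 1\in\Irr(\mathcal C)$ and $\rho_{\1}$ is the identity, $G\subseteq H$. Hence $\Fix(H)\subseteq\Fix(G)=\mathbb L$, and an element $a\in\mathbb L$ is fixed by $H$ if and only if it is fixed by each double coset $G\rho_XG$. For $g,g'\in G$ we have $g\rho_Xg'(a)=g(\rho_X(a))$. So if $\rho_X(a)=a$ then $g\rho_Xg'(a)=g(a)=a$, because $a\in\mathbb L$ is fixed by $G$. Conversely, taking $g=g'=1$ shows that fixing the double coset forces $\rho_X(a)=a$. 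Therefore $\Fix(H)=\{a\in\mathbb L:\rho_X(a)=a\ \forall X\}$, which by Steps 1 and 2 equals $\Omega\mathcal Z(\mathcal C)$.
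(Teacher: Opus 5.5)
Your proposal is correct and follows essentially the same route as the paper: the paper's proof is your Step 3 double-coset argument, using $G\subseteq H$, $\Fix(G)=\mathbb L$, and $g_1\rho_Xg_2(c)=g_1(\rho_X(c))$ for $c\in\mathbb L$. The only difference is that the paper cites \cite[Prop.~4.7]{MR4806973} for the characterization $\Omega\mathcal Z(\mathcal C)=\{c\in\mathbb L:\lambda_X(c)=\rho_X(c)\ \text{for all simple } X\}$ and takes Kuperberg's normalization of the embeddings for granted, whereas your Steps 1 and 2 derive these directly from the trivial half-braiding on $\1$ and from the splitting-field property of $\mathbb E$.
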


\begin{proof}
    Let $c\in\Omega\mathcal Z(\mathcal C)\subseteq\mathbb L$, and $h\in H$ be arbitrary.
    By the definition of $H$, we can find some $X\in C$ and some $g_1,g_2\in G$ such that $h=g_1\rho_Xg_2$.
    By \cite[Prop 4.7]{MR4806973}, it follows that
    \begin{align*}
        c&=\lambda_X(c)\\
        &=\rho_X(c)\\
        &=g_1^{-1}\Big(h\big(g_2^{-1}(c)\big)\Big)\\
        &=g_1^{-1}\big(h(c)\big)\,.
    \end{align*}
    Applying $g_1$ to both sides then shows that $c\in\Fix(H)$.

    Conversely, suppose $c\in\Fix(H)$.
    Clearly $H\supseteq G$, and so $c\in\Fix(H)\subseteq\Fix(G)=\mathbb L$.
    For every simple object $X$, $\rho_X\in H$, and so $\lambda_X(c)=c=\rho_X(c)$.
    Thus, $c\in\Omega\mathcal Z(\mathcal C)$ by \cite[Prop 4.7]{MR4806973}.
\end{proof}

\begin{lemma}\label{lem:orbit decomposition}
    The group $G$ acts on $V(f):=\{\theta_i\}_{i=1}^n$, and the orbit decomposition of this $G$-set is
    \[V(f)\;=\;\coprod_{j=1}^mV(f_j)\;\;,\;\;\text{where } V(f_j):=\{\theta_i|J(i)=j\}\,.\]
    Moreover, if any two $\phi,\psi\in\Gamma$ satisfy $\phi(\theta)=\theta_i$ and $\psi(\theta)=\theta_{i'}$ with $J(i)=J(i')$, then $G\phi G=G\psi G$.
\end{lemma}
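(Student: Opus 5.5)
The first claim is that $G=\Gal(\mathbb E/\mathbb L)$ acts on $V(f)$ with orbits exactly the sets $V(f_j)$.
Each $g\in G$ fixes $\mathbb L\supseteq\mathbb K$, so it permutes the roots of $f$.
It also permutes the roots of each factor $f_j\in\mathbb L[x]$, since the coefficients of $f_j$ are fixed by $g$.
Hence each $V(f_j)$ is $G$-stable.
These sets are disjoint and cover $V(f)$.
Indeed, $f$ is separable (as $\mathbb L/\mathbb K$ is separable), so the $f_j$ are pairwise coprime and each $\theta_i$ is a root of exactly one $f_j$; this is the content of the function $J$.

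Transitivity on each $V(f_j)$ is the standard fact that the Galois group of a Galois extension acts transitively on the roots of an irreducible polynomial.
Here $\mathbb E/\mathbb L$ is Galois, because $\mathbb E$ is the splitting field of the separable polynomial $f$ over $\mathbb L$ as well as over $\mathbb K$.
Also $f_j\in\mathbb L[x]$ is irreducible and splits in $\mathbb E$.
So for $\theta_i,\theta_{i'}\in V(f_j)$, there is an $\mathbb L$-embedding $\mathbb L(\theta_i)\to\mathbb E$ sending $\theta_i\mapsto\theta_{i'}$, and it extends to an element of $G$.
This gives the orbit decomposition.

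For the double coset statement, suppose $\phi(\theta)=\theta_i$ and $\psi(\theta)=\theta_{i'}$ with $J(i)=J(i')$.
By the first part there is $g\in G$ with $g(\theta_i)=\theta_{i'}$.
Then $g\phi(\theta)=\psi(\theta)$, so $\psi^{-1}g\phi$ fixes $\theta$.
Since $\mathbb L=\mathbb K(\theta)$ and all these maps fix $\mathbb K$, it follows that $\psi^{-1}g\phi$ fixes $\mathbb L$, i.e.\ $g':=\psi^{-1}g\phi\in G$.
Hence $\psi=g\phi g'^{-1}\in G\phi G$, and the two double cosets coincide.

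The only point needing care is the Galois-ness of $\mathbb E/\mathbb L$ and the separability of $f$, which gives distinct roots so that $J$ is well defined.
Both follow from the standing separability assumption on $\mathbb L/\mathbb K$.
I expect no real obstacle; the step most worth checking is the identification $\mathrm{Stab}_\Gamma(\theta)=G$, which is exactly the primitive element property $\mathbb L=\mathbb K(\theta)$.
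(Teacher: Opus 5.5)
Your proof is correct and follows the paper's argument: first $G$-stability of each $V(f_j)$, then transitivity from irreducibility of $f_j$ over $\mathbb L$, and finally the double-coset claim via $\psi^{-1}g\phi$ fixing $\theta$ and hence all of $\mathbb L=\mathbb K(\theta)$. The only differences are minor: you get $G$-stability directly from $g$ fixing the coefficients of $f_j$ rather than through the interpolation polynomials $P_j$, and you spell out that $\mathbb E/\mathbb L$ is Galois and that the $V(f_j)$ are disjoint, both of which the paper leaves implicit.
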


\begin{proof}
    For $1\leq i\leq n$ and $g\in G$, Proposition \ref{prop:p-i and P-j} part (2) implies that
    \[1\;=\;g(1)\;=\;g\big(P_{J(i)}(\theta_i)\big)\;=\;P_{J(i)}\big(g(\theta_i)\big)\,,\]
    where the last equality holds because the $P_j$'s have coefficients in $\mathbb L=\Fix(G)$.
    Since $g(\theta_i)=\theta_{i'}$ for some $i'$, we can apply Proposition \ref{prop:p-i and P-j}(2) in reverse to find that $J(i')=J(i)$.
    This shows that each of the sets $V(f_j):=\{\theta_i|J(i)=j\}$ is a sub $G$-set.
    Furthermore, since the $f_j(x)$ are indecomposable over $\mathbb L[x]$, it follows that the action of $G$ is transitive on $V(f_j)$ for every $j$.

    To prove the `moreover' statement, start by using transitivity of the action to find some $g\in G$ such that $g(\theta_i)=\theta_{i'}$.
    It follows that $(\psi^{-1}g\phi)(\theta)=\theta$.
    Since $\mathbb L=\mathbb K(\theta)$, we find that $\psi^{-1}g\phi$ fixes all elements of $\mathbb L$.
    In other words, there is some $g'\in G$ such that $\psi^{-1}g\phi=g'$, and this establishes the claim.
\end{proof}

\begin{lemma}\label{lem:main faithfulness lemma}
    For any 2-separable fusion category $\mathcal C$ over $\mathbb K$, the action of $\mathcal C\bt\mathcal C^{mp}$ on $\mathcal C$ is faithful if and only if $\Omega\mathcal Z(\mathcal C)=\mathbb K$.
\end{lemma}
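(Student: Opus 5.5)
Faithfulness of the action of $\mathcal C\bt\mathcal C^{mp}$ on $\mathcal C$ is a statement about the simple summands $U_j$ of $\1\bt\1$. The plan is to reduce it to these summands and then to Galois theory. Every simple object of $\mathcal C\bt\mathcal C^{mp}$ is a summand of some $(X\bt Y)\otimes U_j$ with $X,Y$ simple. The action of $X\bt Y$ is invertible up to summands, by rigidity: $X\otimes(-)\otimes Y$ is faithful on nonzero objects. So the action is faithful if and only if $\1\triangleleft U_j\neq0$ for every $j$, equivalently $W\triangleleft U_j\neq 0$ for all $j$ and some, hence every, $W$.

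Next we compute $\1\triangleleft U_j$. The idempotent defining $U_j$ is the image of $P_j(x)$ under $\mathbb L[x]/\langle f\rangle\cong\mathbb L\otimes_{\mathbb K}\mathbb L$, where the left factor gives the coefficients and $x\mapsto 1\otimes\theta$. Equivalently, it is $e_j=P_j(1\otimes\theta)$ with coefficients read in the first factor.

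On an object $W\in\mathcal C$, the element $a\otimes b\in\End(\1\bt\1)$ acts by $\lambda_W(a)\circ\rho_W(b)$. Here $\lambda_W,\rho_W:\mathbb L\to\End(W)$ are the left and right embeddings. For simple $W$, following Kuperberg as set up above, we take $\lambda_W$ to be the inclusion and $\rho_W$ to extend to $\rho_W\in\Gamma$. Then $e_j$ acts on a simple $W$ by the scalar $P_j(\rho_W(\theta))$. Since $\rho_W(\theta)=\theta_i$ for some $i$, Proposition \ref{prop:p-i and P-j}(2) shows this scalar is $\delta_{j,J(i)}$.

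Hence $W\triangleleft U_j\neq0$ exactly when $\rho_W(\theta)\in V(f_j)$. Faithfulness therefore becomes the condition that every orbit $V(f_j)$ is hit by some $\rho_W(\theta)$ as $W$ ranges over $\Irr(\mathcal C)$. One should also check that the action of $e_j$ on a direct sum is computed summandwise, and that applying the action to $\1\bt\1$ on a general $W$ gives $\bigoplus_j W\triangleleft U_j$.

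It remains to translate this condition into the equality $H=\Gamma$. By Lemma \ref{lem:orbit decomposition}, the double cosets $G\phi G\subseteq\Gamma$ are in bijection with the orbits $V(f_j)$, via $\phi\mapsto J$ of the index of $\phi(\theta)$. This map is surjective because $\Gamma$ acts transitively on $V(f)$, and injective by the "moreover" clause. Thus every $j$ is hit if and only if $H=\bigcup_W G\rho_WG$ contains every double coset, that is, $H=\Gamma$. By the Galois correspondence and Proposition \ref{prop:fixed field}, $H=\Gamma$ if and only if $\Omega\mathcal Z(\mathcal C)=\Fix(H)=\Fix(\Gamma)=\mathbb K$, which proves the lemma.

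The main obstacle is the bookkeeping in computing $e_j$ on $W$, in particular getting the correct factor for $\theta$ and correctly using $\lambda_W$ versus $\rho_W$ with the $\mathcal C^{mp}$ convention. I would first pin this down on the example $\Vec_{\mathbb C}$ over $\mathbb R$.
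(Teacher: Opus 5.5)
There is a genuine error in your first paragraph. The faithfulness criterion stated there is false, and it is not the criterion your later paragraphs use.

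Since $\1$ is simple and $\1\cong\1\triangleleft(\1\bt\1)\cong\bigoplus_j\1\triangleleft U_j$, exactly one $\1\triangleleft U_j$ is nonzero. The same holds for every simple $W$; your own scalar $\delta_{j,J(i)}$ says so. Hence ``$\1\triangleleft U_j\neq0$ for every $j$'' holds only when $m=1$, i.e.\ $\Omega\mathcal C=\mathbb K$. Likewise ``$W\triangleleft U_j\neq0$ for all $j$ and some, hence every, $W$'' fails for every simple $W$ once $m>1$. Taken literally, your reduction would prove that the action is faithful iff $\Omega\mathcal C=\mathbb K$, and that is wrong. A counterexample is $\Bim_{\Vec_{\mathbb R}}(\mathbb C)$ from Example \ref{eg:Bim(C)}. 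It is Morita equivalent to $\Vec_{\mathbb R}$, so $\Omega\mathcal Z=\mathbb R$ and the action is faithful. Yet the unit sees only one of the two summands $U_j$; the other is seen only by $\mathbb C_\sigma$, whose right embedding is complex conjugation.

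The rigidity argument breaks because the $U_j$ are not central in the multifusion category $\mathcal C\bt\mathcal C^{mp}$. The objects $(X\bt Y)\otimes U_j$ and $U_j\otimes(X\bt Y)$ differ, and the off-diagonal pieces $U_k\otimes(X\bt Y)\otimes U_j$ with $k\neq j$ are nonzero in general. So acting by $X\bt Y$ moves an object supported on $U_k$ to one supported on $U_j$. That is exactly the mechanism the lemma is about, and it means you cannot transport nonvanishing from $W$ to $\1$ within a fixed $j$. The correct reduction, which the paper uses, is: the action is faithful iff for each $j$ there exists a simple $W$, depending on $j$, with $W\triangleleft U_j\neq0$.

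Your ``therefore'' in the third paragraph silently uses this correct version. Once you replace the first paragraph with it, the rest of your argument goes through and matches the paper's proof. In both, the idempotent $P_j(1\otimes\theta)$ acts on a simple $W$ by $P_j(\rho_W(\theta))=\delta_{j,J(i)}$. The condition is then turned into $H=\Gamma$ via Lemma \ref{lem:orbit decomposition}, and into $\Omega\mathcal Z(\mathcal C)=\mathbb K$ via Proposition \ref{prop:fixed field}. Your bijection between $G\backslash\Gamma/G$ and the orbits $V(f_j)$ is a tidy repackaging of the paper's two-directional argument.
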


\begin{proof}
    Our separability assumptions imply that $\mathcal C\bt\mathcal C^{mp}$ is multifusion, and therefore faithfulness of the action is equivalent to the statement that for each summand $U_j\subseteq\1\bt\1$, there exists an object $X$ such that $X\triangleleft U_j\neq0$.
    Using Proposition \ref{prop:p-i and P-j} part (2), we see that this is equivalent to the statement that for every $1\leq j\leq m$, there exists an $X$ such that $P_j\big(\rho_X(\theta)\big)=1$.
    By the Fundamental Theorem of Galois Theory, $\Fix(H)=\Fix(\Gamma)$ if and only if $H=\Gamma$.
    In light of Proposition \ref{prop:fixed field}, we find that $\Omega\mathcal Z(\mathcal C)=\mathbb K$ if and only if $H=\Gamma$.
    These equivalences are summarized in the diagram below.
    \[\begin{tikzcd}[ampersand replacement=\&]
    	{\Bigg(\;\mathcal C\curvearrowleft \mathcal C\bt\mathcal C^{mp}\;\text{ is faithful}\;\Bigg)} \&\& {\Bigg(\;\forall j,\exists X,\;P_j\big(\rho_X(\theta)\big)=1\;\bigg)} \\
    	\\
    	{\Big(\;\Omega\mathcal Z(\mathcal C)=\mathbb K\;\Big)} \&\& {\Big(\;H=\Gamma\;\Big)}
    	\arrow[Rightarrow, 2tail reversed, from=1-1, to=1-3]
    	\arrow["{\text{(iff 1)}}"{description}, Rightarrow, 2tail reversed, from=1-1, to=3-1]
    	\arrow["{\text{(iff 2)}}"{description}, Rightarrow, 2tail reversed, from=1-3, to=3-3]
    	\arrow[Rightarrow, 2tail reversed, from=3-1, to=3-3]
    \end{tikzcd}\]
    We will prove the translated equivalence (iff 2).

    For any $\gamma\in\Gamma$, there is a unique $i$ such that $\gamma(\theta)=\theta_i$.
    If we assume the action is faithful, then for $j=J(i)$, there must be some $X$ that satisfies $P_j\big(\rho_X(\theta)\big)=1$.
    By Lemma \ref{lem:orbit decomposition}, we must have $\gamma\in G\rho_XG\subseteq H$, and hence $H=\Gamma$.

    Conversely, suppose that $H=\Gamma$.
    For any $1\leq j\leq m$, choose some $i$ such that $J(i)=j$.
    Since $f(x)$ is indecomposable over $\mathbb K[x]$ and $\mathbb E$ is a splitting field for $f$, $\Gamma=\Gal(\mathbb E/\mathbb K)$ acts transitively on the roots of $f$.
    By transitivity, we can find some $\gamma\in\Gamma$ such that $\gamma(\theta)=\theta_i$.
    By assumption, we can find some simple object $X$ in $\mathcal C$ and $g_1,g_2\in G$ such that $\gamma=g_1\rho_Xg_2$.
    Now we can compute that
    \[
        1=P_j\big(\gamma(\theta)\big)
        =P_j\big((g_1\rho_Xg_2)(\theta)\big)
        =g_1\Big(P_j\big(\rho_X(\theta)\big)\Big)
        =P_j\big(\rho_X(\theta)\big)\,.
    \]
    Since $j$ was arbitrary, the proof is complete. 
\end{proof}

\begin{corollary}\label{cor:LoopsZ=K Gives an invertible bimodule}
    If $\mathcal C$ is a 2-separable fusion category over $\mathbb K$, and $\Omega\mathcal Z(\mathcal C)=\mathbb K$, then $\epsilon$ is an invertible $(\mathcal Z,\mathcal C\bt\mathcal C^{mp})$ bimodule.
\end{corollary}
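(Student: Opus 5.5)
The plan is to apply Theorem \ref{thm:faithful to invertible} to $\mathcal D:=\mathcal C\bt\mathcal C^{mp}$ acting on $\mathcal M:=\mathcal C$, and then to identify the module dual with $\mathcal Z$. This takes three steps, carried out in order.

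First, I check the hypotheses of Theorem \ref{thm:faithful to invertible}. Since $\mathcal C$ is 2-separable, so is $\mathcal C^{mp}$. Tensor products of separable rigid algebras in $2\Vec_{\mathbb K}$ are again separable, so $\mathcal C\bt\mathcal C^{mp}$ is a 2-separable multifusion category. The category $\mathcal C$ is 1-separable, being multifusion. By Lemma \ref{lem:main faithfulness lemma}, the hypothesis $\Omega\mathcal Z(\mathcal C)=\mathbb K$ is exactly what makes the action of $\mathcal C\bt\mathcal C^{mp}$ on $\mathcal C$ faithful. Theorem \ref{thm:faithful to invertible} then says that $\mathcal C$ is an invertible bimodule between $\mathcal C\bt\mathcal C^{mp}$ and its module dual, with the appropriate side conventions; the action in $\epsilon$ is on the right, so I use the right-module version.

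Second, I identify that module dual. By definition it is $\Fun_{\mathcal C\bt\mathcal C^{mp}}(\mathcal C,\mathcal C)$. A $\mathcal C\bt\mathcal C^{mp}$-module endofunctor of $\mathcal C$ is the same thing as a $\mathcal C$-$\mathcal C$ bimodule endofunctor, so this is $\Fun_{\mathcal C\dash\mathcal C}(\mathcal C,\mathcal C)=\mathcal Z(\mathcal C)$ by Definition \ref{def:DrinfeldCenter}. The monoidal structure on the dual is composition, which corresponds to the tensor product in $\mathcal Z$ under $F\mapsto F(\1)$. I would also confirm that the induced left action of $\mathcal Z$ on $\mathcal C$, namely $F\triangleright W=F(W)\cong F(\1)\otimes W$, agrees with the action of $\mathcal Z$ through $\Forg$ that appears in $\epsilon$.

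Third, I compare the central structure. The bimodule $\epsilon$ also carries a $(\mathcal Z,\mathcal Z)$-central structure. This structure is just the half-braiding isomorphism intertwining $\Forg(Z)\otimes W$ with the action of $Z$ through the canonical braided functor $\mathcal Z\bt\mathcal Z^{rev}\to\mathcal Z(\mathcal C\bt\mathcal C^{mp})$. It is extra structure on an already invertible bimodule, so invertibility as a plain bimodule suffices, because the inverse $\mathcal C^{op}$ inherits a compatible central structure. This is a formal check.

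The main obstacle is the second step: making sure the identification $\mathcal D^*_{\mathcal C}\simeq\mathcal Z$ is monoidal, with the correct $mp$ conventions, and that the resulting bimodule structure really is the one defining $\epsilon$. Once this is matched, the corollary follows immediately from Lemma \ref{lem:main faithfulness lemma} and Theorem \ref{thm:faithful to invertible}.
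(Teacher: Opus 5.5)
Your proposal is correct and follows the paper's proof: faithfulness comes from Lemma \ref{lem:main faithfulness lemma}, then Theorem \ref{thm:faithful to invertible} is applied in its right-module form, with the module dual $(\mathcal C\bt\mathcal C^{mp})^*_{\mathcal C}$ equal to $\mathcal Z$ by Definition \ref{def:DrinfeldCenter}. The step you flag as the main obstacle is simply that definition in the paper, and your extra checks (2-separability of $\mathcal C\bt\mathcal C^{mp}$, the central structure) are harmless details the paper leaves implicit.
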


\begin{proof}
    Lemma \ref{lem:main faithfulness lemma} implies that $\epsilon$ is faithful as a right module.
    Theorem \ref{thm:faithful to invertible} then tells us that $\epsilon$ is invertible as a $(\mathcal Z,\mathcal C\bt\mathcal C^{mp})$ bimodule, because Definition \ref{def:DrinfeldCenter} defines $\mathcal Z$ to be $(\mathcal C\bt\mathcal C^{mp})^*_{\mathcal C}$.
\end{proof}

\begin{corollary}\label{cor:LoopsZ=K Makes Z nondegenerate and separable}
    If $\mathcal C$ is a 2-separable fusion category over $\mathbb K$, and $\Omega\mathcal Z(\mathcal C)=\mathbb K$, then $\mathcal Z(\mathcal Z)\simeq\mathcal Z\bt\mathcal Z^{rev}$.
    In particular, this means that $\mathcal Z$ is nondegenerately braided and 2-separable.
\end{corollary}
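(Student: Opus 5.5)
Building on Corollary \ref{cor:LoopsZ=K Gives an invertible bimodule}, we regard $\epsilon=\mathcal C$ as an invertible $(\mathcal Z,\mathcal C\bt\mathcal C^{mp})$ bimodule. Invertible bimodules induce Morita equivalences, and Morita equivalent multifusion categories have equivalent Drinfeld centers: the center of $\mathcal A$ is $\Fun_{\mathcal A\dash\mathcal A}(\mathcal A,\mathcal A)$, and conjugating by an invertible bimodule identifies this with the corresponding category for $\mathcal B$. Applying this to $\epsilon$ gives a braided equivalence
\[\mathcal Z(\mathcal Z)\simeq\mathcal Z(\mathcal C\bt\mathcal C^{mp}).\]

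The second step is to compute the right-hand side. Centers are multiplicative under Deligne products, so $\mathcal Z(\mathcal C\bt\mathcal C^{mp})\simeq\mathcal Z(\mathcal C)\bt\mathcal Z(\mathcal C^{mp})$. One checks this directly: a $\mathcal C\bt\mathcal C^{mp}$ bimodule endofunctor of $\mathcal C\bt\mathcal C^{mp}$ is, via the universal property of $\bt$, the same as commuting pairs of bimodule endofunctors on each factor. Separability makes this valid over a general field, because the relevant Deligne products are 1-separable by Example \ref{eg:Eilenberg-Watts}. We also have $\mathcal Z(\mathcal C^{mp})\simeq\mathcal Z(\mathcal C)^{rev}$, since a half-braiding in $\mathcal C^{mp}$ is the inverse of a half-braiding in $\mathcal C$. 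This yields $\mathcal Z(\mathcal Z)\simeq\mathcal Z\bt\mathcal Z^{rev}$.

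The main obstacle is that this abstract equivalence does not by itself say that the canonical functor $\mathcal Z\bt\mathcal Z^{rev}\to\mathcal Z(\mathcal Z)$ is an equivalence, and nondegeneracy requires exactly that. I would handle this by tracking the construction. The $(\mathcal Z,\mathcal C\bt\mathcal C^{mp})$ bimodule $\mathcal C$ is the unit/counit data for $\mathcal C$ as a 1-morphism $\mathcal Z\to\Vec_{\mathbb K}$. Under the equivalence, the two canonical inclusions $\mathcal Z\to\mathcal Z(\mathcal C\bt\mathcal C^{mp})$, via $\mathcal Z(\mathcal C)$ and via $\mathcal Z(\mathcal C^{mp})$, correspond to the left and right half-braiding inclusions $\mathcal Z,\mathcal Z^{rev}\to\mathcal Z(\mathcal Z)$. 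These are compatible because the central structure on $\epsilon$ is given by the half-braidings, as in the balancing computation preceding Proposition \ref{prop:dual is Muger center}.

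Hence the canonical functor $\mathcal Z\bt\mathcal Z^{rev}\to\mathcal Z(\mathcal Z)$ is identified with an equivalence, so $\mathcal Z$ is nondegenerate. The Müger center $\mathcal Z_2(\mathcal Z)$ is then the image of the transparent objects, and it is trivial because $\mathcal Z\bt\mathcal Z^{rev}\simeq\mathcal Z(\mathcal Z)$. Finally, for 2-separability we use Proposition \ref{prop:Separable<=>Z is LocSep}. The category $\mathcal Z\bt\mathcal Z^{rev}$ is 1-separable: $\mathcal Z$ is 1-separable because $\mathcal C$ is 2-separable, and Example \ref{eg:Eilenberg-Watts} makes the product 1-separable. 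So $\mathcal Z(\mathcal Z)$ is 1-separable, which means $\mathcal Z$ is 2-separable.
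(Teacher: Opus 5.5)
Your argument is correct and follows the paper's proof. The paper gets the braided equivalence $\mathcal Z\bt\mathcal Z^{rev}\to\mathcal Z(\mathcal Z)$ from invertibility of $\epsilon$ by citing \cite[Prop. 8.6.3]{MR3242743}, where you argue it directly via Morita invariance and multiplicativity of centers. It then gets nondegeneracy from \cite[Thm. 3.20]{MR4302495}, and 2-separability via Proposition \ref{prop:Separable<=>Z is LocSep} and Example \ref{eg:Eilenberg-Watts}, exactly as you do.

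One step needs tightening. Your sentence deriving trivial Müger center from factorizability is too vague. The precise version: for a simple transparent $T$ one has $G(T\bt\1)\cong G(\1\bt T)$. Full faithfulness of $G$ then gives $\Hom(T,\1)\otimes\Hom(\1,T)\neq 0$, which forces $T\cong\1$.
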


\begin{proof}
    Corollary \ref{cor:LoopsZ=K Gives an invertible bimodule} gives invertibility of $\epsilon$, and this implies that the canonical map $\mathcal Z\bt\mathcal Z^{rev}\to\mathcal Z(\mathcal Z)$ is a braided equivalence (cf. \cite[Prop. 8.6.3]{MR3242743}), and so $\mathcal Z$ is factorizable.
    Now, we can apply \cite[Thm. 3.20]{MR4302495} to deduce that $\mathcal Z$ is nondegenerate, \emph{i.e.} $\1$ is the only simple object in $\mathcal Z_2(\mathcal Z)$.
    2-Separability of $\mathcal C$ implies that $\mathcal Z$ is 1-separable by Proposition \ref{prop:Separable<=>Z is LocSep}.
    The tensor product of 1-separable categories is again 1-separable (see Example \ref{eg:Eilenberg-Watts}), so $\mathcal Z\bt\mathcal Z^{rev}\simeq\mathcal Z(\mathcal Z)$ is 1-separable.
    Proposition \ref{prop:Separable<=>Z is LocSep} (now applied in reverse) shows that $\mathcal Z$ is 2-separable.
\end{proof}

\begin{theorem}\label{thm:main invertibility theorem}
    For $\mathcal C$ a 2-separable fusion category over $\mathbb K$, $\mathcal C$ is invertible as a 1-morphism $\mathcal Z(\mathcal C)\to\Vec_{\mathbb K}$ in $\M$ if and only if $\Omega\mathcal Z(\mathcal C)=\mathbb K$.
\end{theorem}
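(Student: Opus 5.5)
By Proposition \ref{prop:inductive invertibility}, combined with the inductive truncation argument described after it, $\mathcal C:\mathcal Z\to\Vec_{\mathbb K}$ is invertible in $\M$ if and only if the unit $\eta$ and counit $\epsilon$ of the adjunction $\mathcal C\dashv\mathcal C^{mp}$ are invertible 2-morphisms. This adjunction exists by \cite[Sec. 3.11]{douglasDualizable}. The plan is to prove both directions by analysing $\eta$ and $\epsilon$ separately.

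For the forward direction, assume $\Omega\mathcal Z(\mathcal C)=\mathbb K$.

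First, Corollary \ref{cor:LoopsZ=K Gives an invertible bimodule} already gives that $\epsilon=\mathcal C$ is an invertible $(\mathcal Z,\mathcal C\bt\mathcal C^{mp})$ bimodule. I still need to check that its inverse can be chosen compatibly with the $(\mathcal Z,\mathcal Z)$-central structures, so that it is invertible as a 2-morphism in $\M$ and not merely as a bimodule. The central structure on a bimodule is extra structure, and an equivalence of bimodules that intertwines the central structures automatically has an inverse doing the same. So I expect this check to be routine.

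For $\eta$, I would apply Theorem \ref{thm:faithful to invertible} to $\mathcal D=\mathcal C^{mp}\bt_{\mathcal Z}\mathcal C$ acting on $\mathcal M=\mathcal C$. This requires three inputs:
\begin{enumerate}
\item $\mathcal M=\mathcal C$ is 1-separable, which holds by definition of multifusion.
\item $\mathcal D$ is 2-separable multifusion. It is 1-separable by the proposition closing Section \ref{sec:The 4-cat Mor2}, since $\mathcal Z$ is 2-separable by Corollary \ref{cor:LoopsZ=K Makes Z nondegenerate and separable}. For full 2-separability I would either use the fully separable sub-4-category of \cite{decoppet2024classificationfusion2categories}, or identify $\mathcal D$ with the multifusion category ${}_A(\mathcal C^{mp}\bt\mathcal C)_A$ for the separable algebra $A$ and appeal to Theorem \ref{thm:LocSep&Sep=>Sep}.
\item The action is faithful, which is Corollary \ref{cor:HardFaithfulness}.
\end{enumerate}
Theorem \ref{thm:faithful to invertible} then makes $\mathcal C$ an invertible $(\mathcal D,(\mathcal D^*_{\mathcal C})^{mp})$ bimodule. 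By Proposition \ref{prop:dual is Muger center}, the module dual is $\mathcal Z_2(\mathcal Z)$. By Corollary \ref{cor:LoopsZ=K Makes Z nondegenerate and separable}, $\mathcal Z$ is nondegenerate, and since $\Omega\mathcal Z=\mathbb K$ this gives $\mathcal Z_2(\mathcal Z)\simeq\Vec_{\mathbb K}$. So $\eta$ is an invertible $(\mathcal C^{mp}\bt_{\mathcal Z}\mathcal C,\Vec_{\mathbb K})$ bimodule, again with trivially compatible central structures. Hence both the unit and the counit are invertible, and $\mathcal C$ is invertible.

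For the converse, suppose $\mathcal C$ is invertible. Then $\epsilon$ is an invertible bimodule, so it is in particular faithful as a right $\mathcal C\bt\mathcal C^{mp}$ module category. The reason is that if a summand $U_j$ of $\1\bt\1$ acted by zero, then tensoring with the inverse bimodule would show that $U_j$ acts by zero on $\mathcal C\bt\mathcal C^{mp}$ itself, which is absurd. Lemma \ref{lem:main faithfulness lemma} then gives $\Omega\mathcal Z(\mathcal C)=\mathbb K$.

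An alternative route for the converse is that invertibility of $\mathcal C$ forces $\Omega\mathcal Z=\Omega(\Vec_{\mathbb K})=\mathbb K$ by comparing endomorphisms of identity 1-morphisms. However, the faithfulness route is cleaner.

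The main obstacle I expect is the 2-separability of $\mathcal C^{mp}\bt_{\mathcal Z}\mathcal C$ needed to invoke Theorem \ref{thm:faithful to invertible}, together with the identification $\mathcal Z_2(\mathcal Z)\simeq\Vec_{\mathbb K}$. I would handle the latter by noting that a braided fusion category whose only simple transparent object is $\1$, with $\End(\1)=\mathbb K$, has its M\"uger center generated by $\1$.
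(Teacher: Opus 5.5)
Your proposal is correct and follows the paper's proof step for step. Invertibility of $\epsilon$ comes from Corollary \ref{cor:LoopsZ=K Gives an invertible bimodule}, invertibility of $\eta$ comes from Theorem \ref{thm:faithful to invertible} together with Corollary \ref{cor:HardFaithfulness}, Proposition \ref{prop:dual is Muger center} and Corollary \ref{cor:LoopsZ=K Makes Z nondegenerate and separable}, and the converse comes from faithfulness of the invertible bimodule $\epsilon$ plus Lemma \ref{lem:main faithfulness lemma}. The paper leaves implicit the extra hypotheses you flag: central structures on the inverses, and 2-separability of $\mathcal C^{mp}\bt_{\mathcal Z}\mathcal C$ as required by Theorem \ref{thm:faithful to invertible}. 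For the latter, rely on the fully separable sub-4-category rather than Theorem \ref{thm:LocSep&Sep=>Sep}, since $\mathcal C^{mp}\bt_{\mathcal Z}\mathcal C$ is $(\mathcal C^{mp}\bt\mathcal C)_A$ with $\otimes_A$, not ${}_A(\mathcal C^{mp}\bt\mathcal C)_A$.
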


\begin{proof}
    If $\mathcal C$ is an invertible 1-morphism, then in particular the bimodule $\epsilon=\mathcal C$ is invertible, and hence faithful as a right $\mathcal C\bt\mathcal C^{mp}$ module category.
    Lemma \ref{lem:main faithfulness lemma} then implies that $\Omega\mathcal Z\mathcal C=\mathbb K$.

    Conversely, suppose that $\Omega\mathcal Z\mathcal C=\mathbb K$.
    By Proposition \ref{prop:inductive invertibility}, it will suffice to show that both $\epsilon$ and $\eta$ are invertible bimodule categories.
    We already have invertiblity of $\epsilon$, thanks to Corollary \ref{cor:LoopsZ=K Gives an invertible bimodule}.

    We would like to use Theorem \ref{thm:faithful to invertible} to prove that $\eta$ is also invertible.
    Since Corollary \ref{cor:HardFaithfulness} gives faithfulness of the action, it will suffice to show that the dual category is equivalent to $\Vec_{\mathbb K}$.
    By Lemma \ref{prop:dual is Muger center}, the module dual to $\mathcal C^{mp}\boxtimes_{\mathcal Z}\mathcal C$ with respect to $\eta=\mathcal C$ is $\mathcal Z_2(\mathcal Z)$ (the Müger center of the Drinfeld center), and this is just $\Vec_{\mathbb K}$ by Corollary \ref{cor:LoopsZ=K Makes Z nondegenerate and separable}.
\end{proof}

This invertibility result has several useful consequences.
Here we are content to list three.

\begin{corollary}[{cf. \cite[Thm. 2.26, part 4]{MR4302495}}]\label{cor:Cmp[x]_ZC is just End(C)}
    Suppose $\mathcal C$ is 2-separable fusion over $\mathbb K$, and $\Omega\mathcal Z\mathcal C=\mathbb K$.
    The action of $\mathcal C^{mp}\boxtimes_{\mathcal Z}\mathcal C$ on $\mathcal C$ induces an equivalence
    \[\mathcal C^{mp}\bt_{\mathcal Z}\mathcal C\to\Fun(\mathcal C,\mathcal C)\,.\]
\end{corollary}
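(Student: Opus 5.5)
The plan is to deduce this from the invertibility of $\eta$ established in the proof of Theorem \ref{thm:main invertibility theorem}, together with the identification of the module dual in Proposition \ref{prop:dual is Muger center}. Under the hypotheses, $\eta=\mathcal C$ is an invertible $(\mathcal C^{mp}\bt_{\mathcal Z}\mathcal C,\Vec_{\mathbb K})$ bimodule. This holds because Corollary \ref{cor:HardFaithfulness} gives faithfulness of the left action, and the module dual is $\mathcal Z_2(\mathcal Z)\simeq\Vec_{\mathbb K}$ by Corollary \ref{cor:LoopsZ=K Makes Z nondegenerate and separable}. An invertible bimodule between two multifusion categories identifies each side with the module dual of the other. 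Concretely, the left action map
\[\mathcal C^{mp}\bt_{\mathcal Z}\mathcal C\longrightarrow\Fun_{\Vec_{\mathbb K}}(\mathcal C,\mathcal C)=\Fun(\mathcal C,\mathcal C)\]
should be an equivalence. I would verify this by the standard argument: for an invertible $(\mathcal A,\mathcal B)$ bimodule $\mathcal M$, the functor $\mathcal A\to\Fun_{\mathcal B}(\mathcal M,\mathcal M)$ factors as
\[\mathcal A\simeq\mathcal M\bt_{\mathcal B}\mathcal M^{op}\simeq\Fun_{\mathcal B}(\mathcal M,\mathcal M).\]
The second equivalence is the internal Eilenberg--Watts identification used in the proof of Theorem \ref{thm:faithful to invertible}.

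The steps are as follows. First, record that $\mathcal C^{mp}\bt_{\mathcal Z}\mathcal C$ is multifusion. It is 1-separable by the proposition closing Section \ref{sec:The 4-cat Mor2}, since $\mathcal Z$ is 2-separable by Corollary \ref{cor:LoopsZ=K Makes Z nondegenerate and separable}. It is rigid since it is a category of modules over the commutative central algebra $A$ in $\mathcal C^{mp}\bt\mathcal C$. Second, apply Theorem \ref{thm:faithful to invertible} in its dual (left-module) form with $\mathcal D=\mathcal C^{mp}\bt_{\mathcal Z}\mathcal C$ and $\mathcal M=\mathcal C$. Its proof exhibits $\mathcal D\simeq{}_B\mathcal D_B$, where $\mathcal M\simeq\mathcal D_B$, realizing $\mathcal D$ as the double module dual $(\mathcal D^*_{\mathcal M})^*_{\mathcal M}$. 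Third, substitute $\mathcal D^*_{\mathcal M}\simeq\Vec_{\mathbb K}$, so that the double dual becomes $\Fun_{\Vec_{\mathbb K}}(\mathcal C,\mathcal C)=\Fun(\mathcal C,\mathcal C)$. This yields the desired equivalence.

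The main obstacle is checking that the abstract equivalence produced in this way is actually the action functor $\blacktriangleright$, rather than merely some equivalence. I would handle this by tracking the composite through the identifications: the action $\mathcal D\to\End(\mathcal M)$ is the canonical map into the double centralizer, and the invertibility argument shows that this canonical map is an equivalence. As a sanity check, Theorem \ref{thm:LoopsAlgebraIso} already shows that the action induces an isomorphism on endomorphisms of the unit, $\Omega(\mathcal C^{mp}\bt_{\mathcal Z}\mathcal C)\cong\End(\Id_{\mathcal C})$. This is consistent with the action being fully faithful on the unit component.
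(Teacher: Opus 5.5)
Your proposal is correct and is essentially the paper's own argument. The paper gives no separate proof: it treats the corollary as an immediate consequence of the invertibility of $\eta=\mathcal C$ as a $(\mathcal C^{mp}\bt_{\mathcal Z}\mathcal C,\Vec_{\mathbb K})$ bimodule, established in the proof of Theorem \ref{thm:main invertibility theorem}, combined with the standard fact (cf.\ \cite[Thm.~2.26]{MR4302495}) that for an invertible bimodule the action functor into the module dual of the other side, here $\Fun_{\Vec_{\mathbb K}}(\mathcal C,\mathcal C)=\Fun(\mathcal C,\mathcal C)$, is an equivalence. Your unpacking via the double-centralizer computation in the proof of Theorem \ref{thm:faithful to invertible} is a fine way to make this explicit; the only small correction is that rigidity of $(\mathcal C^{mp}\bt\mathcal C)_A$ needs separability of $A$, not merely commutativity, and that separability is available from Theorem \ref{thm:LocSep&Sep=>Sep}.
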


\begin{remark}
    In the event that $\Omega\mathcal Z\mathcal C=\mathbb K$, the above result makes the statement $\Omega(\mathcal C^{mp}\boxtimes_{\mathcal Z}\mathcal C)\cong\End(\Id_{\mathcal C})$ transparent.
    Despite this clarity in hindsight, we emphasize that Theorem \ref{thm:LoopsAlgebraIso} holds in greater generality, because it does not depend on $\Omega\mathcal Z\mathcal C$.
\end{remark}

\begin{corollary}\label{cor:morita eq up to invertible}
    Suppose $\mathcal C$ and $\mathcal D$ are 2-separable fusion categories over $\mathbb K$.
    If $\mathcal Z(\mathcal C)\simeq\mathcal Z(\mathcal D)$ as braided fusion categories, and $\Omega\mathcal Z(\mathcal C)=\mathbb K$, then there exists a (Morita) invertible fusion category $\mathcal T$ such that $\mathcal D$ is Morita equivalent to $\mathcal C\bt\mathcal T$.
\end{corollary}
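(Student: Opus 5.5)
The plan is to work entirely with 1-morphisms in $\M$ out of $\Vec_{\mathbb K}$. Fix a braided equivalence $F:\mathcal Z(\mathcal C)\to\mathcal Z(\mathcal D)$. Regard $\mathcal C$ as a 1-morphism $\mathcal Z(\mathcal C)\to\Vec_{\mathbb K}$ and $\mathcal D$ as a 1-morphism $\mathcal Z(\mathcal D)\to\Vec_{\mathbb K}$. Precomposing the latter with the invertible 1-morphism $\mathcal Z(\mathcal D)_F$ determined by $F$ gives $\mathcal D_F:\mathcal Z(\mathcal C)\to\Vec_{\mathbb K}$. Since $\Omega\mathcal Z(\mathcal C)=\mathbb K$, Theorem \ref{thm:main invertibility theorem} says that $\mathcal C$ is invertible, with inverse its dual $\mathcal C^{mp}:\Vec_{\mathbb K}\to\mathcal Z(\mathcal C)$. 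We then define
\[\mathcal T\;:=\;\mathcal C^{mp}\bt_{\mathcal Z(\mathcal C)}\mathcal Z(\mathcal D)_F\bt_{\mathcal Z(\mathcal D)}\mathcal D\,,\]
which is an endomorphism $\Vec_{\mathbb K}\to\Vec_{\mathbb K}$ in $\M$, i.e.\ a monoidal category. Note that $\Omega\mathcal Z(\mathcal D)\cong\Omega\mathcal Z(\mathcal C)=\mathbb K$, so Theorem \ref{thm:main invertibility theorem} applies to $\mathcal D$ as well. Hence $\mathcal T$ is a composite of three invertible 1-morphisms and is itself invertible in $\M$. Being an invertible 1-morphism $\Vec_{\mathbb K}\to\Vec_{\mathbb K}$ means exactly that $\mathcal T$ is Morita invertible.

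Next we identify $\mathcal T$ as a fusion category. Since $\mathcal C^{mp}$, $\mathcal Z(\mathcal D)$ and $\mathcal D$ are 2-separable (for $\mathcal Z(\mathcal D)$ use Corollary \ref{cor:LoopsZ=K Makes Z nondegenerate and separable}), the Proposition at the end of Section \ref{sec:The 4-cat Mor2} shows the relative products are 1-separable. Rigidity follows from the explicit description of the composite as a category of modules over a central algebra in $\mathcal C^{mp}\bt\mathcal D$. So $\mathcal T$ is multifusion, and it is 2-separable because it is invertible, its center being $\Vec_{\mathbb K}$. A multifusion category is Morita equivalent to each of its corner fusion categories $\1_i\mathcal T\1_i$ when it is indecomposable, and invertibility forces indecomposability. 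We may therefore replace $\mathcal T$ by a Morita equivalent fusion category, which is still invertible.

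For the final step, compose with $\mathcal C$ on the left. Using $\mathcal C\bt_{\Vec_{\mathbb K}}\mathcal T$, which is $\mathcal C\bt\mathcal T$ by the remark after Definition \ref{def:C-balFunctors&RelativeDeligne}, we get a 2-isomorphism in $\M$
\[\mathcal C\bt\mathcal T\;\simeq\;(\mathcal C\bt_{\mathcal Z(\mathcal C)}\mathcal C^{mp})\bt_{\mathcal Z(\mathcal C)}\mathcal Z(\mathcal D)_F\bt_{\mathcal Z(\mathcal D)}\mathcal D\;\simeq\;\mathcal Z(\mathcal D)_F\bt_{\mathcal Z(\mathcal D)}\mathcal D\;\simeq\;\mathcal D\,,\]
as 1-morphisms $\Vec_{\mathbb K}\to\Vec_{\mathbb K}$. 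The middle step uses that $\mathcal C^{mp}$ is the inverse of $\mathcal C$, so the counit is invertible. The outer composite is then 2-isomorphic to $\mathcal D$ after forgetting the central structure. A 2-isomorphism between 1-morphisms $\Vec_{\mathbb K}\to\Vec_{\mathbb K}$ is precisely an invertible bimodule, i.e.\ a Morita equivalence. Replacing $\mathcal T$ by a Morita equivalent fusion category does not change this conclusion, since the Morita class of $\mathcal C\bt\mathcal T$ depends only on that of $\mathcal T$.

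I expect the main obstacle to be bookkeeping about which side the duals and inverses act on, including that $\mathcal C\bt_{\mathcal Z}\mathcal C^{mp}$ is the correct unit or counit composite. Closely related is checking that "invertible 1-endomorphism of $\Vec_{\mathbb K}$ in $\M$" coincides with the paper's notion of Morita invertible fusion category. A second point needing care is the reduction from multifusion to fusion for $\mathcal T$. I would argue it directly: $\Omega\mathcal Z(\mathcal T)=\mathbb K$, so $\mathcal T$ cannot be a nontrivial direct sum of multifusion categories.
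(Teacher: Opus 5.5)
Your proposal is correct and follows essentially the same route as the paper's proof. You use the same $\mathcal T=\mathcal C^{mp}\bt_{\mathcal Z(\mathcal C)}\mathcal Z(\mathcal D)_F\bt_{\mathcal Z(\mathcal D)}\mathcal D$, get invertibility from Theorem \ref{thm:main invertibility theorem} (you usefully spell out that $\Omega\mathcal Z(\mathcal D)=\mathbb K$, so $\mathcal D$ is invertible too, which the paper leaves implicit), pass to a fusion corner by indecomposability, and whisker the invertible $\epsilon$ with $\mathcal Z(\mathcal D)_F\bt_{\mathcal Z(\mathcal D)}\mathcal D$. One bookkeeping slip: the bracketed composite in your display should be $\mathcal C\bt\mathcal C^{mp}$, the product over $\Vec_{\mathbb K}$ that is the target of $\epsilon$, not $\mathcal C\bt_{\mathcal Z(\mathcal C)}\mathcal C^{mp}$. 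Likewise, the resulting 2-isomorphism is between 1-morphisms $\mathcal Z(\mathcal C)\to\Vec_{\mathbb K}$, not $\Vec_{\mathbb K}\to\Vec_{\mathbb K}$; it becomes a Morita equivalence only after you forget the central structure, which you then do.
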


\begin{proof}
    This is proof is an example of what is known in physics as the folding trick.
    By Theorem \ref{thm:main invertibility theorem}, $\mathcal C$ is an invertible 1-morphism $\mathcal Z(\mathcal C)\to\Vec_{\mathbb K}$, its tensor inverse is necessarily $\mathcal C^{mp}$, thought of as an invertible 1-morphism $\Vec_{\mathbb K}\to\mathcal Z(\mathcal C)$.
    If $F:\mathcal Z(\mathcal D)\to\mathcal Z(\mathcal C)$ is a given braided equivalence, then we can use it to form another invertible 1-morphism $\mathcal Z_F:\mathcal Z(\mathcal C)\to\mathcal Z(\mathcal D)$.
    We can then form the composite
    \[\mathcal T:=\mathcal C^{mp}\bt_{\mathcal Z(\mathcal C)}\mathcal Z_F\bt_{\mathcal Z(\mathcal D)}\mathcal D\,,\]
    which is evidently an invertible 1-morphism from $\Vec_{\mathbb K}$ to itself.
    In other words, $\mathcal T$ is an invertible multifusion category.
    Invertibility implies that $\mathcal T$ is indecomposable, and so, up to composing with another Morita equivalence, we may assume that $\mathcal T$ is fusion and not multifusion.

    The invertible bimodule $\epsilon^{-1}:\mathcal Z(\mathcal C)\to\mathcal C\boxtimes\mathcal C^{mp}$, thought of as an invertible 2-morphism, can be whiskered with $\mathcal Z_F\boxtimes_{\mathcal Z(\mathcal D)}\mathcal D$ to produce a Morita equivalence from $\mathcal D$ to $\mathcal C\bt\mathcal T$.
\end{proof}

\begin{corollary}
    A 2-separable fusion category $\mathcal T$ over $\mathbb K$ is invertible if and only if $\mathcal Z(\mathcal T)=\Vec_{\mathbb K}$
\end{corollary}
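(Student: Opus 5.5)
We will argue both directions by regarding $\mathcal T$ as a $1$-morphism in $\M$ and comparing it with Theorem \ref{thm:main invertibility theorem}. Here ``invertible'' means that $\mathcal T$ is invertible as a $1$-morphism $\Vec_{\mathbb K}\to\Vec_{\mathbb K}$ in $\M$, i.e.\ Morita invertible. In that case the central structure is just the canonical braided functor $\Vec_{\mathbb K}\bt\Vec_{\mathbb K}^{rev}\simeq\Vec_{\mathbb K}\to\mathcal Z(\mathcal T)$ sending $\mathbb K$ to $\1$.

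For the implication ``$\mathcal Z(\mathcal T)=\Vec_{\mathbb K}\Rightarrow\mathcal T$ invertible'', first note that $\mathcal Z(\mathcal T)\simeq\Vec_{\mathbb K}$ gives $\Omega\mathcal Z(\mathcal T)=\End_{\Vec_{\mathbb K}}(\mathbb K)=\mathbb K$. Theorem \ref{thm:main invertibility theorem} then says that $\mathcal T$ is invertible as a $1$-morphism $\mathcal Z(\mathcal T)\to\Vec_{\mathbb K}$. The one point to check is that this is the same $1$-morphism as $\mathcal T:\Vec_{\mathbb K}\to\Vec_{\mathbb K}$. Precomposing with the braided equivalence $\Vec_{\mathbb K}\simeq\mathcal Z(\mathcal T)$ (the unit functor) identifies the two central structures. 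Since equivalent objects of $\M$ give equivalent hom-$3$-categories, invertibility transfers.

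For the converse, suppose $\mathcal T:\Vec_{\mathbb K}\to\Vec_{\mathbb K}$ is invertible. By Proposition \ref{prop:inductive invertibility}, applied with the dual $\mathcal T^{mp}$ supplied by \cite[Sec. 3.11]{douglasDualizable}, the counit
\[\epsilon=\mathcal T\quad\text{(as a }(\Vec_{\mathbb K},\mathcal T\bt\mathcal T^{mp})\text{ bimodule)}\]
is an invertible $2$-morphism. In any higher category, an invertible $2$-morphism $\epsilon:\alpha\to\beta$ induces an equivalence of endomorphism categories $\End(\epsilon)\simeq\End(\id_\alpha)$, given by conjugating with $\epsilon$ and its inverse. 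Here $\End(\epsilon)=\Fun_{\mathcal T\bt\mathcal T^{mp}}(\mathcal T,\mathcal T)=\mathcal Z(\mathcal T)$ by Definition \ref{def:DrinfeldCenter}, while $\End(\id)=\Fun_{\Vec_{\mathbb K}\text{-}\Vec_{\mathbb K}}(\Vec_{\mathbb K},\Vec_{\mathbb K})\simeq\Vec_{\mathbb K}$. This yields $\mathcal Z(\mathcal T)\simeq\Vec_{\mathbb K}$ as monoidal categories.

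To see that this equivalence is the canonical braided one, we observe that it is induced by the central structure $\Vec_{\mathbb K}\to\mathcal Z(\mathcal T)$, which is braided. Alternatively, we can take a shortcut: invertibility of $\mathcal T$ forces $\Omega\mathcal Z(\mathcal T)=\mathbb K$ (through Lemma \ref{lem:main faithfulness lemma}, since an invertible $\epsilon$ is faithful), and then Corollary \ref{cor:LoopsZ=K Makes Z nondegenerate and separable} applies.

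The main obstacle is bookkeeping rather than mathematics. One must confirm that ``invertible fusion category'' coincides with invertibility of the $1$-morphism $\mathcal T$ in $\M$, and that the whiskering and conjugation equivalence $\End(\epsilon)\simeq\End(\id)$ respects the monoidal (indeed braided) structures. I would handle the first by the identification $\Vec_{\mathbb K}\simeq\mathcal Z(\mathcal T)$ described above. I would handle the second by noting that the conjugation equivalence is precisely the map $\Vec_{\mathbb K}\to\mathcal Z(\mathcal T)$, $V\mapsto V\otimes\1$, which is braided.
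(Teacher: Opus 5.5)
Your argument is correct. The \emph{if} direction is exactly the paper's. From $\mathcal Z(\mathcal T)\simeq\Vec_{\mathbb K}$ you get $\Omega\mathcal Z(\mathcal T)=\mathbb K$, so Theorem \ref{thm:main invertibility theorem} applies. The resulting invertible 1-morphism $\mathcal Z(\mathcal T)\to\Vec_{\mathbb K}$ is then identified with $\mathcal T:\Vec_{\mathbb K}\to\Vec_{\mathbb K}$. You make this identification slightly more explicit than the paper does, by precomposing with the invertible 1-morphism induced by the braided unit functor.

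The \emph{only if} direction takes a different route. The paper simply cites the invertibility criteria of \cite[Thm. 2.26]{MR4302495}. One of these says that for an invertible 1-morphism $\mathcal C:\mathcal A\to\mathcal B$, the central functor $\mathcal A\bt\mathcal B^{rev}\to\mathcal Z(\mathcal C)$ is an equivalence. You instead prove the needed special case directly:
\begin{itemize}
\item invertibility of $\mathcal T$ makes the counit $\epsilon$ an invertible 2-morphism, which is the same step the paper uses at the start of the proof of Theorem \ref{thm:main invertibility theorem};
\item whiskering with an invertible 2-morphism gives an equivalence $\Vec_{\mathbb K}\simeq\End(\id_{\id_{\Vec_{\mathbb K}}})\simeq\End(\epsilon)=\mathcal Z(\mathcal T)$;
\item you correctly identify this equivalence with the unit functor $V\mapsto V\otimes\1$.
\end{itemize}
This is essentially an unpacking of the cited criterion in the case $\mathcal A=\mathcal B=\Vec_{\mathbb K}$, where central structures are trivial and $\End(\epsilon)$ is literally $\mathcal Z(\mathcal T)$. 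Your version is self-contained and uses only the adjunction data already set up in the paper. The citation is shorter and sits inside a criterion valid for arbitrary 1-morphisms.

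You should drop the proposed \emph{shortcut} through Lemma \ref{lem:main faithfulness lemma} and Corollary \ref{cor:LoopsZ=K Makes Z nondegenerate and separable}. That corollary only tells you that $\mathcal Z(\mathcal T)$ is nondegenerate and 2-separable. This neither forces $\mathcal Z(\mathcal T)\simeq\Vec_{\mathbb K}$ nor shows that a given equivalence is braided.

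You do not need it anyway. Your identification of the equivalence with the unit functor already settles braidedness. Independently, $\Vec_{\mathbb K}$ carries only one braiding: naturality reduces it to the scalar $c_{\1,\1}$, which the hexagon axioms force to equal $1$. So any monoidal equivalence $\mathcal Z(\mathcal T)\simeq\Vec_{\mathbb K}$ is automatically braided.
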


\begin{proof}
    If $\mathcal Z(\mathcal T)=\Vec_{\mathbb K}$, then $\mathcal T$ satisfies the conditions of Theorem \ref{thm:main invertibility theorem}.
    Thus, $\mathcal T$ is an invertible 1-morphism from $\Vec_{\mathbb K}=\mathcal Z(\mathcal T)$ to itself, which is the same as being an invertible multifusion category.
    Since $\mathcal T$ was assumed to be fusion to begin with, it must be invertible fusion and not multifusion.
    The `only if' direction is immediate from the invertibility criteria established in \cite[Thm. 2.26]{MR4302495}.
\end{proof}

\section{The homotopy fiber of the ENO map \texorpdfstring{$\Psi$}{Ψ}}

Throughout this section, we will fix a 2-separable fusion category $\mathcal C$ over $\mathbb K$, and continue to use the shorthand $\mathcal Z:=\mathcal Z(\mathcal C)$.

In \cite{MR2677836}, for the purposes of classifying group-graded extensions of fusion categories, Etingof, Nikshych, and Ostrik construct a map $\Phi$, from the groupoid of invertible $\mathcal C$ bimodule categories $\BrPic(\mathcal C)$, to the groupoid of braided autoequivalences $\Aut_{br}\big(\mathcal Z(\mathcal C)\big)$ of the Drinfeld center.
In the algebraically closed setting, they show that under suitable truncation, this map is an equivalence of 2-groupoids.
In this section we prove a generalization of their result that works over arbitrary fields.

When working over an arbitrary field, the map $\pi_0\Phi$ can have nontrivial kernel.
This kernel can be captured homotopically by computing the homotopy fiber of $\Phi$.
It turns out that understanding the homotopy fiber also clarifies why truncation was necessary in the original result.

The automorphism $\Phi(\mathcal M)$ comes equipped with a $\mathcal C$ bimodule natural isomorphism
\[\Phi(\mathcal M)(Z)\triangleright (-)\cong (-)\triangleleft Z\,,\]
that is natural in $Z\in\mathcal Z(\mathcal C)$.
Furthermore, such an isomorphism uniquely determines $\Phi(\mathcal M)$ as a braided functor.

To construct this functor explicitly, Use invertibility of $\mathcal M$ to choose some bimodule equivalence $E:\mathcal C\to\mathcal M^{-1}\boxtimes_{\mathcal C}\mathcal C\boxtimes_{\mathcal C}\mathcal M$.
We can think of any object $Z$ in $\mathcal Z(\mathcal C)$ as a bimodule functor $Z:\mathcal C\to\mathcal C$, and under this identification, the map $\Phi$ can be defined to be
\[\Phi(\mathcal M):Z\mapsto E^{-1}\circ(\Id_{\mathcal M^{-1}}\boxtimes_{\mathcal C}Z\boxtimes_{\mathcal C}\Id_{\mathcal M})\circ E\,.\]
The functor $E$ is determined up to an invertible object $Z'$ in $\mathcal Z$.
However, both $E$ and $E^{-1}$ appear in the formula, so this ambiguity cancels out, making $\Phi(M)$ independent of this choice. 

If we think of $\mathcal C$ as a 1-morphism $\mathcal Z\to\Vec_{\mathbb K}$ in $\M$, then any invertible bimodule $\mathcal M$ induces an invertible 2-morphism
\[\mathcal M:\mathcal Z_{\Phi(\mathcal M)}\bt_{\mathcal Z}\mathcal C\to\mathcal C\,,\]
where $\mathcal Z_{\Phi(\mathcal M)}$ is the 1-morphism $\mathcal Z\to\mathcal Z$ (whose underlying monoidal category is $\mathcal Z$) determined by the braided equivalence $\Phi(\mathcal M)$.
The situation is shown diagrammatically below.
\[  
    \begin{tikzineqn}
        \draw[very thick] (0,-2) node[below]{$\mathcal C$}-- (0,0) node[draw,fill=white,line width=1.5pt]{$\mathcal M$}
            -- (0,2) node[above]{$\mathcal C$};
    \end{tikzineqn}
    \hspace{10mm}\longmapsto\hspace{10mm}
    \begin{tikzineqn}
        \fill[color=blue!15] (-2,-2) rectangle (0,2);
        \coordinate (M) at (0,0);
        \draw[very thick] (0,-2) node[below]{$\mathcal C$} -- (M)
        -- (0,2) node[above]{$\mathcal C$};
        \draw[very thick,color=orange!50] (-1,-2) to [out=90,in=225](M);
        \node[draw,fill=white,line width=1.5pt,outer sep=0] at (M) {$\mathcal M$};
        \node[color=orange!90,below] at (-1,-2) {$\mathcal Z_{\Phi(\mathcal M)}$};
        \node[color=blue!90] at (-1.1,0) {$\mathcal Z$};
        \node at (1.25,0) {$\mathrm{Vec}_{\mathbb K}$};
    \end{tikzineqn}
\]

For a bimodule equivalence $F:\mathcal M\to\mathcal N$, $\Phi$ provides us with a monoidal natural isomorphism $\Phi(F):\Phi(\mathcal M)\to\Phi(\mathcal N)$.
This can be used to produce a 2-morphism $\mathcal Z_{\Phi(F)}$ in $\M$, from $\mathcal Z_{\Phi(\mathcal M)}$ to $\mathcal Z_{\Phi(\mathcal N)}$.

\[  
    \left[\;
        \begin{tikzineqn}
            \draw[very thick] (0,-2) node[below]{$\mathcal C$}-- (0,0) node[draw,fill=white,line width=1.5pt]{$\mathcal M$}
                -- (0,2) node[above]{$\mathcal C$};
        \end{tikzineqn}
        \xrightarrow[\hspace{4mm}]{F}
        \begin{tikzineqn}
            \draw[very thick] (0,-2) node[below]{$\mathcal C$}-- (0,0) node[draw,fill=white,line width=1.5pt]{$\mathcal N$}
                -- (0,2) node[above]{$\mathcal C$};
        \end{tikzineqn}\;
    \right]
    \hspace{6mm}\longmapsto\hspace{6mm}
    \left[\;
        \begin{tikzineqn}
            \fill[color=blue!15] (-2,-2) rectangle (0,2);
            \coordinate (M) at (0,1);
            \draw[very thick] (0,-2) node[below]{$\mathcal C$} -- (M)
            -- (0,2) node[above]{$\mathcal C$};
            \draw[very thick,color=orange!50] (-1,-2) to [out=90,in=225](M);
            \node[draw,fill=white,line width=1.5pt,outer sep=0] at (M) {$\mathcal M$};
            \node[color=orange!90,below] at (-1,-2) {$\mathcal Z_{\Phi(\mathcal M)}$};
        \end{tikzineqn}
        \xrightarrow[\hspace{4mm}]{F}\;
        \begin{tikzineqn}
            \fill[color=blue!15] (-2,-2) rectangle (0,2);
            \coordinate (M) at (0,1);
            \coordinate (F) at (-1,-1);
            \draw[very thick] (0,-2) node[below]{$\mathcal C$} -- (M)
            -- (0,2) node[above]{$\mathcal C$};
            \draw[very thick,color=orange!50] (-1,-2) to (F);
            \draw[very thick,color=orange!80] (F)
            to [out=90,in=225] node[color=orange!100!black,above left=-1.5mm]{$\mathcal Z_{\Phi(\mathcal N)}$}(M);
            \node[draw,fill=white,line width=1.5pt,outer sep=0] at (M) {$\mathcal N$};
            \node[color=orange!90,below] at (-1,-2) {$\mathcal Z_{\Phi(\mathcal M)}$};
            \node[draw,fill=white,line width=1.5pt,outer sep=0] at (F) {$\mathcal Z_{\Phi(F)}$};
        \end{tikzineqn}\;\;
    \right]
\]

\begin{definition}
    The homotopy fiber $\hofib(\Phi)$ is a 2-groupoid consisting of:
    \begin{enumerate}\setcounter{enumi}{-1}
        \item An object is a pair $(\mathcal M,\gamma^{\mathcal M})$ of an object $\mathcal M$ in $\BrPic(\mathcal C)$ and a monoidal natural isomorphism $\gamma^{\mathcal M}:\id_{\mathcal Z}\rightarrow\Phi(\mathcal M)$.
        \item A 1-morphism $F:(\mathcal M,\gamma^{\mathcal M})\to(\mathcal N,\gamma^{\mathcal N})$ is a bimodule equivalence $F:\mathcal M\to\mathcal N$, such that $\Phi(F)\circ\gamma^{\mathcal M}=\gamma^{\mathcal N}$ as (monoidal) natural isomorphisms.
        \item A 2-morphism $\nu:F\to G$ is a bimodule natural isomorphism $\nu:F\to G$.
    \end{enumerate}
    The composition of bimodules induces a monoidal structure on $\hofib(\Phi)$ by the rule
    \begin{gather}
    (\mathcal M,\gamma^{\mathcal M})\bt_{\mathcal C}(\mathcal N,\gamma^{\mathcal N}):=\big(\mathcal M\bt_{\mathcal C}\mathcal N\,,\,\gamma^{\mathcal M}*\gamma^{\mathcal N}\big)\;,\label{eqn:tensor product in hofib}\\
    \gamma^{\mathcal M}*\gamma^{\mathcal N}:=\Phi^{(2)}_{\mathcal M,\mathcal N}\circ(\gamma^{\mathcal M}\gamma^{\mathcal N})\;,\label{eqn:convolution of gammas}
    \end{gather}
    where $\Phi^{(2)}_{\mathcal M,\mathcal N}:\Phi(\mathcal M)\Phi(\mathcal N)\to\Phi(\mathcal M\bt_{\mathcal C}\mathcal N)$ is the monoidal structure morphism for $\Phi$.
\end{definition}

For an object $(\mathcal M,\gamma^{\mathcal M})$ in $\hofib(\Phi)$, the map $\gamma^{\mathcal M}$ can be used to construct a 2-morphism $\mathcal Z_{\gamma^\mathcal M}:\id_{\mathcal Z}\to\mathcal Z_{\Phi(\mathcal M)}$ in $\M$.
Whiskering $\mathcal Z_{\gamma^\mathcal M}$ with the identity bimodule on $\mathcal C$ gives us a 2-morphism in $\M$ that we will call $\Triv(\mathcal M,\gamma^{\mathcal M})$.
This construction is depicted below.
\begin{equation}\label{eqn:definition of the map A}
    \begin{tikzineqn}
        \fill[color=blue!15] (-2.5,-2) rectangle (0,1);
        \coordinate (M) at (0,-.5);
        \draw[very thick] (0,-2) node[below]{$\mathcal C$} -- (M)
        -- (0,1) node[above]{$\mathcal C$};
        \node[draw,fill=white,line width=1.5pt,outer sep=0] at (M) {$\Triv(\mathcal M,\gamma^{\mathcal M})$};
        \node[color=blue!90] at (-1.9,-.5) {$\mathcal Z$};
        \node at (2.25,-0.5) {$\mathrm{Vec}_{\mathbb K}$};
    \end{tikzineqn}
    \hspace{3mm}:=\hspace{4mm}
    \begin{tikzineqn}
        \fill[color=blue!15] (-2,-2) rectangle (0,1);
        \coordinate (M) at (0,0);
        \coordinate (G) at (-1,-1);
        \draw[very thick] (0,-2) node[below]{$\mathcal C$} -- (M)
        -- (0,1) node[above]{$\mathcal C$};
        \draw[very thick,color=orange!50] (G) to [out=90,in=225](M);
        \node[draw,fill=white,line width=1.5pt,outer sep=0] at (G) {$\mathcal Z_{\gamma^\mathcal M}$};
        \node[draw,fill=white,line width=1.5pt,outer sep=0] at (M) {$\mathcal M$};
        \node[color=blue!90] at (-1.1,0) {$\mathcal Z$};
        \node at (1.25,-0.5) {$\mathrm{Vec}_{\mathbb K}$};
    \end{tikzineqn}
\end{equation}
Since every factor present in the above composition is invertible, this is an auto-2-morphism of the 1-morphism $\mathcal C$ in $\M$.
In other words, this construction provides a map $\Triv:\hofib(\Phi)\to\Aut_{\M}(\mathcal C)$.
The notation $\Triv$ is meant to evoke the idea that this construction uses the given isomorphism $\gamma^{\mathcal M}$ to `trivialize' the defect $\mathcal Z_{\Phi(\mathcal M)}$.

Let us analyze the effect of $\Triv$ on morphisms.
Given $F:(\mathcal M,\gamma^{\mathcal M})\to(\mathcal N,\gamma^{\mathcal N})$, the equality $\Phi(F)\circ\gamma^{\mathcal M}=\gamma^{\mathcal N}$ provides a canonical equivalence $\mathcal Z_{[F]}:\mathcal Z_{\Phi(F)}\boxtimes_{\mathcal Z_{\Phi(\mathcal M)}}\mathcal Z_{\gamma^{\mathcal M}}\to\mathcal Z_{\gamma^{\mathcal N}}$.
The value of $\Triv(F)$ is then the composition, depicted below, of $F$ followed by $\mathcal Z_{[F]}$.
\[
    \begin{tikzineqn}
        \fill[color=blue!15] (-2.1,-2) rectangle (0,2);
        \coordinate (M) at (0,1.5);
        \coordinate (F1) at (-1,-1.4);
        \draw[very thick] (0,-2) node[below]{$\mathcal C$} -- (M)
        -- (0,2) node[above]{$\mathcal C$};
        \draw[very thick,color=orange!50] (F1) to[out=90,in=225] node[color=orange!70,left=-1mm] {$\mathcal Z_{\Phi(\mathcal M)}$}(M);
        \node[draw,fill=white,line width=1.5pt,outer sep=0] at (M) {$\mathcal M$};
        \node[draw,fill=white,line width=1.5pt,outer sep=0] at (F1) {$\mathcal Z_{\gamma^{\mathcal M}}$};
    \end{tikzineqn}
    \raisebox{9mm}{$\xrightarrow[\hspace{4mm}]{F}$}\;
    \begin{tikzineqn}
        \fill[color=blue!15] (-2.4,-2) rectangle (0,2);
        \coordinate (M) at (0,1.5);
        \coordinate (F1) at (-1,-1.4);
        \coordinate (F2) at (-1,0);
        \draw[very thick] (0,-2) node[below]{$\mathcal C$} -- (M)
        -- (0,2) node[above]{$\mathcal C$};
        \draw[very thick,color=orange!50] (F1) to node[color=orange!70,left] {$\mathcal Z_{\Phi(\mathcal M)}$} (F2);
        \draw[very thick,color=orange!80] (F2)
        to [out=90,in=225] node[color=orange!100!black,above left=-1.5mm]{$\mathcal Z_{\Phi(\mathcal N)}$}(M);
        \node[draw,fill=white,line width=1.5pt,outer sep=0] at (M) {$\mathcal N$};
        \node[draw,fill=white,line width=1.5pt,outer sep=0] at (F2) {$\mathcal Z_{\Phi(F)}$};
        \node[draw,fill=white,line width=1.5pt,outer sep=0] at (F1) {$\mathcal Z_{\gamma^{\mathcal M}}$};
    \end{tikzineqn}
    \raisebox{-8mm}{$\xrightarrow[\hspace{4mm}]{\mathcal Z_{[F]}}$}\;
    \begin{tikzineqn}
        \fill[color=blue!15] (-2,-2) rectangle (0,2);
        \coordinate (M) at (0,1.5);
        \coordinate (F1) at (-1,-.8);
        \draw[very thick] (0,-2) node[below]{$\mathcal C$} -- (M)
        -- (0,2) node[above]{$\mathcal C$};
        \draw[very thick,color=orange!90] (F1) to[out=90,in=225] node[color=orange!100!black,above left=-1.5mm]{$\mathcal Z_{\Phi(\mathcal N)}$} (M);
        \node[draw,fill=white,line width=1.5pt,outer sep=0] at (M) {$\mathcal N$};
        \node[draw,fill=white,line width=1.5pt,outer sep=0] at (F1) {$\mathcal Z_{\gamma^{\mathcal N}}$};
    \end{tikzineqn}
\]

For a 2-morphism $\nu:F\to G$, it follows that $\Phi(F)=\Phi(G)$, and the image $\Triv(\nu)$ is just $\nu:F\to G$ whiskered with $\mathcal Z_{[F]}=\mathcal Z_{[G]}$.

Finally, let us consider the monoidal structure of $\Triv:\hofib(\Phi)\to\Aut_{\M}(\mathcal C)$.
We demonstrate this structure map for $\Triv$ diagrammatically below, because in-line formulas would be more cumbersome and hard to parse.
The key point is that, given any two braided functors $\Psi,\Theta\in\Aut_{br}(\mathcal Z)$, there is a canonical, $\mathcal Z$-central, monoidal equivalence $\mathcal Z_\Psi\boxtimes_{\mathcal Z}\mathcal Z_\Theta\simeq\mathcal Z_{\Psi\Theta}$, which we denote as an unmarked trivalent vertex in the diagram below.

\begin{gather*}
    \begin{tikzineqn}
        \fill[color=blue!15] (-2,-3) rectangle (0,3);
        \coordinate (M) at (0,2);
        \coordinate (GM) at (-1,1);
        \coordinate (N) at (0,-1);
        \coordinate (GN) at (-1,-2);
        \draw[very thick] (0,-3) node[below]{$\mathcal C$} -- (M)
        -- (N)
        -- (0,3) node[above]{$\mathcal C$};
        \draw[very thick,color=orange!50] (GM) to [out=90,in=225](M);
        \draw[very thick,color=orange!90] (GN) to [out=90,in=225](N);
        \node[draw,fill=white,line width=1.5pt,outer sep=0] at (GM) {$\mathcal Z_{\gamma^\mathcal M}$};
        \node[draw,fill=white,line width=1.5pt,outer sep=0] at (GN) {$\mathcal Z_{\gamma^\mathcal N}$};
        \node[draw,fill=white,line width=1.5pt,outer sep=0] at (M) {$\mathcal M$};
        \node[draw,fill=white,line width=1.5pt,outer sep=0] at (N) {$\mathcal N$};
    \end{tikzineqn}
    \;\simeq\;
    \begin{tikzineqn}
        \fill[color=blue!15] (-3,-3) rectangle (0,3);
        \coordinate (M) at (0,2);
        \coordinate (GM) at (-2.1,-2);
        \coordinate (N) at (0,1);
        \coordinate (GN) at (-.9,-2);
        \draw[very thick] (0,-3) node[below]{$\mathcal C$} -- (M)
        -- (N)
        -- (0,3) node[above]{$\mathcal C$};
        \draw[very thick,color=orange!50] (GM) to [out=90,in=225](M);
        \draw[very thick,color=orange!90] (GN) to [out=90,in=225](N);
        \node[draw,fill=white,line width=1.5pt,outer sep=0] at (GM) {$\mathcal Z_{\gamma^\mathcal M}$};
        \node[draw,fill=white,line width=1.5pt,outer sep=0] at (GN) {$\mathcal Z_{\gamma^\mathcal N}$};
        \node[draw,fill=white,line width=1.5pt,outer sep=0] at (M) {$\mathcal M$};
        \node[draw,fill=white,line width=1.5pt,outer sep=0] at (N) {$\mathcal N$};
    \end{tikzineqn}
    \;\simeq\;
    \begin{tikzineqn}
        \fill[color=blue!15] (-3,-3) rectangle (0,3);
        \coordinate (M) at (0,2);
        \coordinate (GM) at (-2.1,-2);
        \coordinate (N) at (0,1);
        \coordinate (GN) at (-.9,-2);
        \coordinate (U) at (-1.5,-.25);
        \coordinate (L) at (-1.5,-.75);
        \draw[very thick] (0,-3) node[below]{$\mathcal C$} -- (M)
        -- (N)
        -- (0,3) node[above]{$\mathcal C$};
        \draw[very thick,color=orange!50] (GM) to [out=90,in=225](L);
        \draw[very thick,color=orange!90] (GN) to [out=90,in=315](L);
        \draw[very thick,color=brown!90] (L) to (U);
        \draw[very thick,color=orange!50] (U) to [out=135,in=225](M);
        \draw[very thick,color=orange!90] (U) to [out=45,in=225](N);
        \node[draw,fill=white,line width=1.5pt,outer sep=0] at (GM) {$\mathcal Z_{\gamma^\mathcal M}$};
        \node[draw,fill=white,line width=1.5pt,outer sep=0] at (GN) {$\mathcal Z_{\gamma^\mathcal N}$};
        \node[draw,fill=white,line width=1.5pt,outer sep=0] at (M) {$\mathcal M$};
        \node[draw,fill=white,line width=1.5pt,outer sep=0] at (N) {$\mathcal N$};
        \draw[very thick,dotted,rounded corners,color=gray] (-2,-.5) rectangle (.75,2.5);
    \end{tikzineqn}\\
    \;=\;
    \begin{tikzineqn}
        \fill[color=blue!15] (-3,-3) rectangle (0,3);
        \coordinate (MN) at (0,1.75);
        \coordinate (GM) at (-2.1,-2);
        \coordinate (GN) at (-.9,-2);
        \coordinate (U) at (-1.5,-.25);
        \coordinate (L) at (-1.5,-.75);
        \draw[very thick] (0,-3) node[below]{$\mathcal C$} -- (MN)
        -- (0,3) node[above]{$\mathcal C$};
        \draw[very thick,color=orange!50] (GM) to [out=90,in=225](L);
        \draw[very thick,color=orange!90] (GN) to [out=90,in=315](L);
        \draw[very thick,color=brown!90] (L) to (U);
        \draw[very thick,color=brown!90] (U) to [out=90,in=225](MN);
        \node[draw,fill=white,line width=1.5pt,outer sep=0] at (GM) {$\mathcal Z_{\gamma^\mathcal M}$};
        \node[draw,fill=white,line width=1.5pt,outer sep=0] at (GN) {$\mathcal Z_{\gamma^\mathcal N}$};
        \node[draw,fill=white,line width=1.5pt,outer sep=0] at (MN) {$\mathcal M\bt_{\mathcal C}\mathcal N$};
        \draw[very thick,dotted,rounded corners,color=gray] (-2.875,-2.75) rectangle (-.125,-.5);
    \end{tikzineqn}
    \;=\;
    \begin{tikzineqn}
        \fill[color=blue!15] (-3,-3) rectangle (0,3);
        \coordinate (MN) at (0,1.75);
        \coordinate (GMN) at (-1.5,-1.5);
        \draw[very thick] (0,-3) node[below]{$\mathcal C$} -- (MN)
        -- (0,3) node[above]{$\mathcal C$};
        \draw[very thick,color=brown!90] (GMN) to [out=90,in=225](MN);
        \node[draw,fill=white,line width=1.5pt,outer sep=0] at (GMN) {$\mathcal Z_{(\gamma^{\mathcal M}*\gamma^{\mathcal N})}$};
        \node[draw,fill=white,line width=1.5pt,outer sep=0] at (MN) {$\mathcal M\bt_{\mathcal C}\mathcal N$};
    \end{tikzineqn}
\end{gather*}

\begin{proposition}\label{prop:Triv is an equivalence}
    The map $\Triv:\hofib(\Phi)\to\Aut_{\M}(\mathcal C)$ is an equivalence of monoidal 2-groupoids.
\end{proposition}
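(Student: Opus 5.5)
We will construct an inverse to $\Triv$ directly, rather than checking essential surjectivity and full faithfulness level by level. Since $\Triv$ has already been shown to be monoidal, producing a quasi-inverse on underlying 2-groupoids is enough: an inverse of a monoidal equivalence automatically inherits a monoidal structure.

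First, recall what an auto-2-morphism of $\mathcal C:\mathcal Z\to\Vec_{\mathbb K}$ in $\M$ is. It is a $(\mathcal Z,\Vec_{\mathbb K})$-central invertible $\mathcal C$-$\mathcal C$ bimodule $\mathcal M$. Concretely, $\mathcal M$ is an invertible $\mathcal C$ bimodule together with a coherent natural isomorphism
\[
\delta^{\mathcal M}:\; Z\triangleright M\cong M\triangleleft Z,\qquad Z\in\mathcal Z,
\]
compatible with both module structures, where $Z$ acts through the forgetful functor. Forgetting $\delta^{\mathcal M}$ gives an object of $\BrPic(\mathcal C)$. By the defining property of $\Phi$ recalled above, $\mathcal M$ also carries a canonical isomorphism
\[
\Phi(\mathcal M)(Z)\triangleright(-)\cong(-)\triangleleft Z,
\]
and this isomorphism uniquely determines $\Phi(\mathcal M)$ as a braided functor. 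Composing the inverse of this canonical isomorphism with $\delta^{\mathcal M}$ gives a natural isomorphism of bimodule endofunctors of $\mathcal M$,
\[
Z\triangleright(-)\;\Longrightarrow\;\Phi(\mathcal M)(Z)\triangleright(-),
\]
natural in $Z$. We then use the fact that $\mathcal M$ is invertible: the action map $\mathcal Z\to\Fun_{\mathcal C\dash\mathcal C}(\mathcal M,\mathcal M)$ is an equivalence, by the ENO-type argument, which over an arbitrary field uses the invertibility of $\mathcal M$ and Definition \ref{def:DrinfeldCenter}. Under this equivalence the natural isomorphism above is $Z\triangleright$ applied to a unique isomorphism $\gamma^{\mathcal M}_Z:Z\to\Phi(\mathcal M)(Z)$. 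We then check that the coherence axiom for $\delta^{\mathcal M}$ on $Z\otimes Z'$ is equivalent to monoidality of $\gamma^{\mathcal M}$. This defines the candidate inverse on objects, $\mathcal M\mapsto(\mathcal M,\gamma^{\mathcal M})$. Conversely, unwinding the composite in \eqref{eqn:definition of the map A}, whiskering by $\mathcal Z_{\gamma^{\mathcal M}}$ equips $\mathcal M$ with exactly the central structure obtained by transporting the canonical $\Phi(\mathcal M)$-twisted isomorphism along $\gamma^{\mathcal M}$. So the two constructions are mutually inverse on objects, up to canonical equivalence.

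On 1-morphisms, a 3-morphism in $\M$ between central bimodules is a bimodule equivalence compatible with the central structures. Translated through the bijection $\delta\leftrightarrow\gamma$, this compatibility is precisely the condition $\Phi(F)\circ\gamma^{\mathcal M}=\gamma^{\mathcal N}$. We will check that $\Triv(F)$, which is $F$ followed by the canonical $\mathcal Z_{[F]}$, corresponds to $F$ itself under these identifications. On 2-morphisms, both sides consist of bimodule natural isomorphisms. Centrality of a transformation between central functors is automatic, since it is a property of the underlying transformation. Hence $\Triv$ is bijective on 2-morphisms, because whiskering with the invertible $\mathcal Z_{[F]}$ is a bijection.

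The main obstacle is the object-level identification: showing that $\mathcal Z$-central structures on an invertible bimodule $\mathcal M$ correspond bijectively, and compatibly with 1-morphisms, to monoidal isomorphisms $\id_{\mathcal Z}\to\Phi(\mathcal M)$. The key input is the full faithfulness of $Z\mapsto Z\triangleright(-)$ into bimodule endofunctors of $\mathcal M$, and we expect this to need care over non-closed fields. We will derive it from invertibility of $\mathcal M$, using $\mathcal M^{-1}\bt_{\mathcal C}(-)\bt_{\mathcal C}\mathcal M$ to reduce to the case $\mathcal M=\mathcal C$, where it is the definition of $\mathcal Z$. This step does not need $\Omega\mathcal Z=\mathbb K$. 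That hypothesis only enters later, when $\Aut_{\M}(\mathcal C)$ is identified with $\BrPic(\Vec_{\mathbb K})$ using Theorem \ref{thm:main invertibility theorem}.
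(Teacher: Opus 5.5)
Your proposal is correct and is essentially the paper's argument: your inverse $(\mathcal M,\delta^{\mathcal M})\mapsto(\mathcal M,\gamma^{\mathcal M})$ is the paper's glorification map $\Glor$, which writes the trivialization as $\id_{\id_{\mathcal Z}}$ after using the central structure to identify $\Phi(\mathcal M')$ with $\id_{\mathcal Z}$; your full-faithfulness argument for $Z\mapsto Z\triangleright(-)$ just makes that identification explicit. Likewise, your claim that whiskering with $\mathcal Z_{\gamma^{\mathcal M}}$ reproduces the central structure transported along $\gamma^{\mathcal M}$ is exactly the paper's ``red path'' computation showing $\Phi(F)_Z=\gamma^{\mathcal M}_Z$ for the bimodule equivalence $F:\Triv(\mathcal M,\gamma^{\mathcal M})\simeq\mathcal M$, and your explicit treatment of the 1- and 2-morphism levels spells out what the paper leaves implicit without needing any new idea.
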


\begin{proof}    
    Our notation assumes that $\mathcal C$ is being thought of as a 1-morphism $\mathcal Z\to\Vec_{\mathbb K}$.
    In particular, this implies that any $\mathcal M'\in\Aut_{\M}(\mathcal C)$ must be $\mathcal Z$-central, and therefore must satisfy $\Phi(\mathcal M')=\id_{\mathcal Z}$.
    Thus, we can construct an object $\Glor(\mathcal \mathcal M'):=(\mathcal M',\id_{\id_{\mathcal Z}})\in\hofib(\Phi)$.
    Since this construction will be an inverse to $\Triv=$ trivialization, we will call it $\Glor=$ glorification, with the idea being that we are overselling the significance of $\mathcal M'$ by bundling it up with trivial trivialization data.

    Immediately from the definitions, it is clear that $\Triv\Glor(\mathcal M')=\mathcal M'$.
    What about $\Glor\Triv(\mathcal M,\gamma^{\mathcal M})$?
    We are looking to build an equivalence $F:(\Triv(\mathcal M,\gamma^{\mathcal M}),\id_{\id_{\mathcal Z}})\to(\mathcal M,\gamma^{\mathcal M})$.
    Crucially, the underlying equivalence $F:\Triv(\mathcal M,\gamma^{\mathcal M})\to\mathcal M$ is not required to be $\mathcal Z$-central, it only needs to be a $\mathcal C$ bimodule equivalence.
    Since we can ignore $\mathcal Z$-centrality data for the moment, we find that $\mathcal Z_{\Phi(\mathcal M)}\simeq\mathcal Z$ as monoidal categories, and $\mathcal Z_{\gamma^{\mathcal M}}\simeq\mathcal Z$ as left $\mathcal Z$ module categories.  Therefore,
    \begin{gather*}
        \Triv(\mathcal M,\gamma^{\mathcal M}):=\mathcal M\bt_{\mathcal Z_{\Phi(\mathcal M)}\bt_{\mathcal Z}\mathcal C}(\mathcal Z_{\gamma^{\mathcal M}}\bt_{\mathcal Z}\id_{\mathcal C})
        \simeq\mathcal M\bt_{\mathcal C}\mathcal C\simeq\mathcal M\,
    \end{gather*}
    as $\mathcal C$ bimodule categories.
    
    Let us use the above equivalence as our $F$.
    Since composing with $\id_{\id_{\mathcal Z}}$ does nothing, we would need $\Phi(F)=\gamma^{\mathcal M}$ in order for this to be an actual 1-morphism in $\hofib(\Phi)$.
    This is where the $\mathcal Z$-centrality data comes back into play.
    For $Z\in \mathcal Z$, the isomorphism $\Phi(F)_Z$ is determined by the requirement that the following diagram commutes for all $T\in\Triv(\mathcal M,\gamma^{\mathcal M})$:
    \[\begin{tikzcd}[ampersand replacement=\&]
    	{Z\triangleright F(T)} \&\& {\Phi(\mathcal M)(Z)\triangleright F(T)} \\
    	{F(Z\triangleright T)} \& {F(T\triangleleft Z)} \& {F(T)\triangleleft Z}
    	\arrow["{\Phi(F)_Z\triangleright\id_{F(T)}}", from=1-1, to=1-3]
    	\arrow[from=1-1, to=2-1]
    	\arrow[from=2-1, to=2-2]
    	\arrow[from=2-2, to=2-3]
    	\arrow[from=2-3, to=1-3]
    \end{tikzcd}\;\;.\]

    These morphisms can be interpreted inside $\M$ using the $\mathcal Z$-central structures on all the modules.
    We can visualize the path of the object $Z$ with the following diagram.

    \[
        \begin{tikzpicture}
            \fill[color=blue!15] (-3,-3) rectangle (0,2);
            \coordinate (M) at (0,.25);
            \coordinate (G) at (-1,-1.25);
            \draw[very thick] (0,-3) node[below]{$\mathcal C$} -- (M)
            -- (0,2) node[above]{$\mathcal C$};
            \draw[very thick,color=orange!50] (G) to [out=90,in=225](M);
            \node[draw,fill=white,line width=1.5pt,outer sep=0] at (G) {$\mathcal Z_{\gamma^\mathcal M}$};
            \node[draw,fill=white,line width=1.5pt,outer sep=0] at (M) {$\mathcal M$};
            \draw[->,thick,color=red] (0,1) -- (0,1.75);
            \draw[->,thick,color=red] (0,1.75) arc (90:270:2.25cm);
            \draw[->,thick,color=red] (0,-2.75) -- (0,-.5);
            \draw[->,thick,color=red] (0,-.5) arc (270:90:7.5mm);
            arc (90:270:22mm);
            edge[->] (0,.5);
            \draw[-{stealth},color=gray,bend right] (1,1) node[right]{Start/End} to (0,1);
        \end{tikzpicture}
    \]
    The map $\Phi(F)_Z:Z\to\Phi(\mathcal M)(Z)$ is completely determined by the homotopy class of the red path shown above.
    Furthermore, the $\mathcal Z$-central structure on $\mathcal C$ allows the path to detach from the boundary via homotopy.
    This shows that $\Phi(F)_Z$ is determined by the fact that this path encircles the module $\mathcal Z_{\gamma^{\mathcal M}}$.
    According to the $\mathcal Z$-central structure used to define this module category, this map is precisely $\gamma^{\mathcal M}_Z$.

    Thus $\Glor\Triv\simeq\id$ and $\Triv\Glor\simeq\id$, so this forms an equivalence of 2-groupoids.
    Since $\Triv$ was monoidal to begin with, this is an equivalence of monoidal 2-groupoids.
\end{proof}

\begin{theorem}\label{thm:fiber sequence}
    Suppose that $\mathcal C$ is 2-separable fusion over $\mathbb K$ and $\Omega\mathcal Z(\mathcal C)=\mathbb K$.  There is a homotopy fiber sequence
    \begin{equation*}\label{eqn:The fibration}
    \BrPic(\Vec_{\mathbb K})\to\BrPic(\mathcal C)\xrightarrow{\Phi}\Aut_{br}\big(\mathcal Z(\mathcal C)\big)\,.
    \end{equation*}
\end{theorem}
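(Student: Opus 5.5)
By Proposition \ref{prop:Triv is an equivalence}, $\Triv$ identifies $\hofib(\Phi)$ with $\Aut_{\M}(\mathcal C)$ as monoidal 2-groupoids. Here $\Aut_{\M}(\mathcal C)$ is the 2-groupoid of invertible 2-morphisms $\mathcal C\Rightarrow\mathcal C$, where $\mathcal C$ is regarded as a 1-morphism $\mathcal Z\to\Vec_{\mathbb K}$. The plan is therefore to produce a monoidal equivalence
\[
\BrPic(\Vec_{\mathbb K})=\Aut_{\M}(\id_{\Vec_{\mathbb K}})\;\simeq\;\Aut_{\M}(\mathcal C)
\]
and then check that composing it with $\Triv^{-1}$ and the forgetful map $\hofib(\Phi)\to\BrPic(\mathcal C)$ gives the stated first arrow. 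That arrow is $\mathcal M\mapsto\mathcal C\bt\mathcal M$, the action of $\Vec_{\mathbb K}$-bimodules on $\mathcal C$-bimodules.

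The equivalence itself comes from Theorem \ref{thm:main invertibility theorem}. Since $\Omega\mathcal Z(\mathcal C)=\mathbb K$, the 1-morphism $\mathcal C:\mathcal Z\to\Vec_{\mathbb K}$ is invertible in $\M$, with inverse $\mathcal C^{mp}$. In any higher category, whiskering with an invertible 1-morphism is an equivalence of endomorphism categories. Concretely, for 2-morphisms $\mathcal N:\id_{\Vec_{\mathbb K}}\Rightarrow\id_{\Vec_{\mathbb K}}$ I would use the functor
\[
\mathcal N\longmapsto \id_{\mathcal C}\bt\mathcal N,
\]
that is, horizontal composition of $\mathcal N$ with $\mathcal C$ on the $\Vec_{\mathbb K}$ side. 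Its inverse is whiskering with $\mathcal C^{mp}$ followed by conjugation by the invertible unit and counit bimodules $\eta$ and $\epsilon$. Those are invertible by Corollary \ref{cor:LoopsZ=K Gives an invertible bimodule} and the proof of Theorem \ref{thm:main invertibility theorem}. Whiskering respects vertical composition up to the interchange isomorphisms, so this is a monoidal equivalence of 2-groupoids. It restricts to the invertible parts, giving $\BrPic(\Vec_{\mathbb K})\simeq\Aut_{\M}(\mathcal C)$.

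Next I would identify the composite $\BrPic(\Vec_{\mathbb K})\to\Aut_{\M}(\mathcal C)\xrightarrow{\Triv^{-1}=\Glor}\hofib(\Phi)\to\BrPic(\mathcal C)$. Since $\Glor$ adjoins the trivial trivialization, the composite sends $\mathcal N$ to the underlying $\mathcal C$-bimodule of $\id_{\mathcal C}\bt\mathcal N$. This is $\mathcal C\bt\mathcal N$ with the evident bimodule structure, and its $\mathcal Z$-central structure comes from $\mathcal C$ alone. The consequence is that $\Phi(\mathcal C\bt\mathcal N)$ is canonically trivialized, with trivialization $\id$. This shows the sequence is a fiber sequence: $\BrPic(\Vec_{\mathbb K})\to\hofib(\Phi)$ is an equivalence compatible with the projection to $\BrPic(\mathcal C)$, and the composite into $\Aut_{br}(\mathcal Z)$ is canonically nullhomotopic.

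I expect the main obstacle to be coherence. The whiskering equivalence and $\Glor$ must be compatible at the level of 1- and 2-morphisms in the 2-groupoids, and at the level of monoidal structure. In particular, the canonical nullhomotopy of $\Phi\circ(\mathcal C\bt-)$ must match the one packaged by $\hofib$. I would handle this with the string-diagram bookkeeping used in Proposition \ref{prop:Triv is an equivalence}. The $\mathcal N$ defect sits entirely in the $\Vec_{\mathbb K}$ region, so a path of an object $Z\in\mathcal Z$ never links it, which forces the induced $\Phi$-data to be the identity.
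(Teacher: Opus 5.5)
Your proposal is correct and follows the paper's proof. You identify $\hofib(\Phi)$ with $\Aut_{\M}(\mathcal C)$ via Proposition~\ref{prop:Triv is an equivalence}, then use the invertibility of $\mathcal C:\mathcal Z(\mathcal C)\to\Vec_{\mathbb K}$ from Theorem~\ref{thm:main invertibility theorem} to obtain the whiskering equivalence $\mathcal C\bt(-):\Aut_{\M}(\id_{\Vec_{\mathbb K}})\to\Aut_{\M}(\mathcal C)$. You take the identification $\Aut_{\M}(\id_{\Vec_{\mathbb K}})\simeq\BrPic(\Vec_{\mathbb K})$ as given, which the paper justifies by noting that $\Vec_{\mathbb K}$-central structures are trivial; your explicit identification of the first arrow as $\mathcal N\mapsto\mathcal C\bt\mathcal N$ and your coherence discussion go slightly beyond what the paper writes out.
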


\begin{proof}
    The defining property of the homotopy fiber is that it provides a fiber sequence of the form:
    \begin{equation*}\label{eqn:tautological fiber sequence}
        \hofib(\Phi)\to\BrPic(\mathcal C)\xrightarrow{\Phi}\Aut_{br}\big(\mathcal Z(\mathcal C)\big)\,.
    \end{equation*}
    Proposition \ref{prop:Triv is an equivalence} shows that we can replace $\hofib(\Phi)$ with $\Aut_{\M}(\mathcal C)$.
    By Theorem \ref{thm:main invertibility theorem}, $\mathcal C:\mathcal Z(\mathcal C)\to\Vec_{\mathbb K}$ is an invertible 1-morphism in $\M$.
    It follows that composition on the left with $\mathcal C$ induces a further equivalence of monoidal 2-groupoids
    \[\mathcal C\bt(-):\Aut_{\M}(\id_{\Vec_{\mathbb K}})\to\Aut_{\M}(\mathcal C)\,.\]
    
    Finally, since $\M$ is defined over $\mathbb K$, $\Vec_{\mathbb K}$-central structures are always trivial.
    This means that $\Aut_{\M}(\id_{\Vec_{\mathbb K}})$ consists of invertible bimodules for $\Vec_{\mathbb K}$, bimodule autoequivalences, and bimodule natural isomorphisms, and this is precisely $\BrPic(\Vec_{\mathbb K})$.
\end{proof}

\begin{corollary}\label{cor:5-term LES}
    Suppose that $\mathcal C$ is 2-separable fusion over $\mathbb K$, and $\Omega\mathcal Z(\mathcal C)=\mathbb K$.
    Then there is a 5-term left exact sequence
    \[\Inv\big(\mathcal Z(\mathcal C)\big)\hookrightarrow\aut_{\otimes}(\Id_{\mathcal Z(\mathcal C)})\to\Br(\mathbb K)\to\brpic(\mathcal C)\to\aut_{br}\big(\mathcal Z(\mathcal C)\big)\,.\]
\end{corollary}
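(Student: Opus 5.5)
The plan is to read off the long exact sequence of homotopy groups from the fiber sequence of Theorem \ref{thm:fiber sequence}, and then identify each term. For a fiber sequence of (monoidal) 2-groupoids $F\to E\to B$ there is a long exact sequence
\[\pi_2F\to\pi_2E\to\pi_2B\to\pi_1F\to\pi_1E\to\pi_1B\to\pi_0F\to\pi_0E\to\pi_0B\,.\]
Because everything here is grouplike monoidal, exactness holds as groups even at the $\pi_0$ level. With $F=\BrPic(\Vec_{\mathbb K})$, $E=\BrPic(\mathcal C)$ and $B=\Aut_{br}(\mathcal Z)$, the stated 5-term sequence should be exactly the tail starting at $\pi_1E$. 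So the work consists of computing homotopy groups and checking one injectivity.

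I will identify the terms as follows. $\pi_0\BrPic(\Vec_{\mathbb K})$ is the group of invertible $\Vec_{\mathbb K}$-bimodule categories up to equivalence. Each of these is of the form $\Mod(A)$ for a central simple algebra $A$, so this group is $\Br(\mathbb K)$. I take this as the standard identification recalled in the introduction, via Eilenberg–Watts (Example \ref{eg:Eilenberg-Watts}) and Morita theory. Next, $\pi_1\Aut_{br}(\mathcal Z)$ is the group of monoidal automorphisms of the identity functor, namely $\aut_\otimes(\Id_{\mathcal Z})$, and $\pi_0\Aut_{br}(\mathcal Z)=\aut_{br}(\mathcal Z)$. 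Finally, $\pi_1\BrPic(\mathcal C)$ is the group of bimodule autoequivalences of $\mathcal C$. These are given by $Z\otimes(-)$ for invertible $Z$, since $\Fun_{\mathcal C\dash\mathcal C}(\mathcal C,\mathcal C)=\mathcal Z$ by Definition \ref{def:DrinfeldCenter}. Hence this group is $\Inv(\mathcal Z)$.

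It remains to show that the first map is injective, and this is where I expect the main obstacle. The map preceding $\pi_1E\to\pi_1B$ in the sequence is $\pi_1F\to\pi_1E$, where $\pi_1F=\pi_1\BrPic(\Vec_{\mathbb K})$ consists of the bimodule autoequivalences of $\Vec_{\mathbb K}$. These correspond to invertible objects of $\mathcal Z(\Vec_{\mathbb K})=\Vec_{\mathbb K}$, and only $\mathbb K$ itself is invertible, so this group is trivial. Exactness then makes $\Inv(\mathcal Z)\to\aut_\otimes(\Id_{\mathcal Z})$ injective. I will confirm this map is the expected one, sending $Z$ to the double braiding with $Z$, by unwinding the definition of $\Phi$ on 1-morphisms.

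One could also see injectivity directly. An invertible $Z$ with trivial double braiding lies in $\mathcal Z_2(\mathcal Z)$, which is $\Vec_{\mathbb K}$ by Corollary \ref{cor:LoopsZ=K Makes Z nondegenerate and separable}. Such a $Z$ is therefore isomorphic to $\1$, since $\Omega\mathcal Z=\mathbb K$. I will include this as a cross-check. The remaining care is verifying that $\Aut_\M(\id_{\Vec_{\mathbb K}})$ has trivial $\pi_1$, which follows from the description of $\BrPic(\Vec_{\mathbb K})$ at the end of the proof of Theorem \ref{thm:fiber sequence}.
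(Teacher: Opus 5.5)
Your proposal is correct and is essentially the paper's proof: both take the tail of the long exact sequence of homotopy groups of the fibration in Theorem \ref{thm:fiber sequence}, identify $\pi_0\BrPic(\Vec_{\mathbb K})\cong\Br(\mathbb K)$, $\pi_1\BrPic(\mathcal C)\cong\Inv(\mathcal Z)$ and $\pi_1\Aut_{br}(\mathcal Z)\cong\aut_{\otimes}(\Id_{\mathcal Z})$, and get injectivity of the first map from $\pi_1\BrPic(\Vec_{\mathbb K})\cong\Inv(\Vec_{\mathbb K})=0$. The only differences are that the paper cites \cite[Props.~4.9 and 7.1]{MR2677836} for these identifications where you sketch them directly, and that your M\"uger-center cross-check is an optional extra not needed for the statement.
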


\begin{proof}
    This follows from the long exact sequence of the fibration from Theorem \ref{thm:fiber sequence}.
    It was already observed in \cite[Prop. 4.9]{MR2677836} that $\pi_0(\BrPic(\Vec_{\mathbb K}))=\brpic(\Vec_{\mathbb K})=\Br(\mathbb K)$.
    The remaining homotopy groups were identified in \cite[Prop. 7.1]{MR2677836}.
    In particular, $\pi_1(\BrPic(\Vec_{\mathbb K}))\cong\Inv(\mathcal Z(\Vec_{\mathbb K}))=\Inv(\Vec_{\mathbb K})=0$.
    This explains why the first map in the 5-term sequence is injective, and also why this portion of the full long exact sequence breaks off into these last 5 terms.
\end{proof}

The lack of surjectivity at the end of the sequence in Corollary \ref{cor:5-term LES} makes our result logically incomparable to \cite[Thm. 1.1]{MR2677836}: It is stronger in one way, and weaker in another.
Lack of surjectivity comes from the existence of nontrivial invertible fusion categories that were discovered in \cite{sanford2024invertiblefusioncategories}.
To see this, let us establish a construction.

\begin{definition}\label{def:the map T}
    Given an $F\in\Aut_{br}(\mathcal Z(\mathcal C))$, let $\mathcal T_F$ be the following composition of 1-morphisms in $\M$:
    \begin{equation}
        \mathcal T_F:=\mathcal C^{mp}\bt_{\mathcal Z}\mathcal Z_F\bt_{\mathcal Z}\mathcal C\,.
    \end{equation}
    If $\mathcal C$ is invertible in $\M$, then $\mathcal T_F$ will be invertible, and this construction determines a homomorphism
    \[\mathcal T:\aut_{br}\big(\mathcal Z(\mathcal C)\big)\to\aut_{\M}(\Vec_{\mathbb K})\,\]
\end{definition}

Using this map $\mathcal T$, we can extend the exact sequence one term further.
The group $\aut_{\M}(\Vec_{\mathbb K})$ is the group of all invertible fusion categories over $\mathbb K$, up to Morita equivalence.
This group was computed in \cite[Thm. 5.9]{sanford2024invertiblefusioncategories} to be isomorphic to $H^3(\mathbb K;\mathbb G_m)$.

\begin{theorem}\label{thm:what's the cokernel?}
    Suppose that $\mathcal C$ is 2-separable fusion over $\mathbb K$, and $\Omega\mathcal Z(\mathcal C)=\mathbb K$.
    The long exact sequence of homotopy groups associated to the fibration of Theorem \ref{thm:fiber sequence} can be extended by appending $\mathcal T$ (shown below), though $\mathcal T$ itself is not necessarily surjective.
    \[
    \begin{tikzpicture}[x=3cm,y=1.2cm]
        \node (A) at (0,2) {$\mathbb K^\times$};
        \node (B) at (1,2) {$\mathbb K^\times$};
        \node (C) at (2,2) {$0$};
        \node (D) at (0,1) {$0$};
        \node (E) at (1,1) {$\mathrm{Inv}\big(\mathcal Z(\mathcal C)\big)$};
        \node (F) at (2,1) {$\mathrm{Aut}_{\otimes}(\mathrm{Id}_{\mathcal Z(\mathcal C)})$};
        \node (G) at (0,0) {$\mathrm{Br}(\mathbb K)$};
        \node (H) at (1,0) {$\mathrm{BrPic}(\mathcal C)$};
        \node (I) at (2,0) {$\mathrm{Aut}_{br}\big(\mathcal Z(\mathcal C)\big)$};
        \node (J) at (0,-1) {$H^3(\mathbb K;\mathbb G_m)$};
        \draw[->, rounded corners]
        (A) edge (B)
        (B) edge (C)
        (C) -- ++(0.6,0) -- ++(0,-.5) -- ++(-3.1,0) -- ++(0,-.5) -- ++(.1,0) edge (D)
        (D) edge (E)
        (E) edge  (F)
        (F) -- ++(0.6,0) -- ++(0,-.4) -- ++(-3.1,0) -- ++(0,-.6) -- ++(.1,0) edge (G)
        (G) edge (H)
        (H) -- node[above]{$\pi_0\Phi$} (I);
        \draw[->, rounded corners, orange] (I) -- ++(0.6,0) -- ++(0,-.5) -- node[below]{$\mathcal T$} ++(-3.1,0) -- ++(0,-.5) -- (J);
    \end{tikzpicture}
    \]
\end{theorem}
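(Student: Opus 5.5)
The long exact sequence of the fibration in Theorem \ref{thm:fiber sequence} is already available, and Corollary \ref{cor:5-term LES} identifies its terms. So the new content is twofold. First, $\mathcal T$ must be a well-defined group homomorphism $\aut_{br}(\mathcal Z)\to\aut_{\M}(\Vec_{\mathbb K})\cong H^3(\mathbb K;\mathbb G_m)$. Second, the sequence must be exact at $\aut_{br}(\mathcal Z)$, meaning $\ker\mathcal T=\im\pi_0\Phi$.

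The homomorphism property should follow directly from invertibility. By Theorem \ref{thm:main invertibility theorem}, $\mathcal C:\mathcal Z\to\Vec_{\mathbb K}$ is invertible with inverse $\mathcal C^{mp}$. Hence $\mathcal T_F=\mathcal C^{mp}\circ\mathcal Z_F\circ\mathcal C$ is conjugation of the invertible 1-morphism $\mathcal Z_F\in\aut_{\M}(\mathcal Z)$ by an invertible 1-morphism. Using the canonical equivalences $\mathcal Z_F\bt_{\mathcal Z}\mathcal Z_G\simeq\mathcal Z_{FG}$ and $\mathcal C\bt\mathcal C^{mp}\simeq\id_{\mathcal Z}$ (the latter being $\epsilon^{-1}$), we get
\[
\mathcal T_F\bt\mathcal T_G\simeq\mathcal T_{FG}.
\]
A monoidal natural isomorphism $F\cong F'$ yields an invertible 2-morphism $\mathcal Z_F\to\mathcal Z_{F'}$. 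Therefore $\mathcal T$ descends to isomorphism classes, which lands in $\pi_0$ of the 1-morphism groupoid.

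For $\im\pi_0\Phi\subseteq\ker\mathcal T$, take $F=\Phi(\mathcal M)$. The discussion preceding Definition \ref{def:the map T} supplies an invertible 2-morphism $\mathcal M:\mathcal Z_{\Phi(\mathcal M)}\bt_{\mathcal Z}\mathcal C\to\mathcal C$. Whiskering with $\mathcal C^{mp}$ then gives
\[
\mathcal T_{\Phi(\mathcal M)}\simeq\mathcal C^{mp}\bt_{\mathcal Z}\mathcal C\simeq\id_{\Vec_{\mathbb K}},
\]
where the last equivalence is invertibility of $\eta$.

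For the converse, suppose $\mathcal T_F\simeq\id_{\Vec_{\mathbb K}}$ by an invertible 2-morphism $\mathcal N$, i.e. an invertible $\mathcal T_F$-$\Vec_{\mathbb K}$ bimodule. Conjugating back by the invertible $\mathcal C$ using $\epsilon$ and $\eta$ produces an invertible 2-morphism $\mathcal M:\mathcal Z_F\bt_{\mathcal Z}\mathcal C\to\mathcal C$ in $\M$. Forgetting all central structures except the $\mathcal C$-actions, and using $\mathcal Z_F\bt_{\mathcal Z}\mathcal C\simeq\mathcal C$ as a plain monoidal category, $\mathcal M$ becomes an invertible $\mathcal C$-bimodule. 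Its $\mathcal Z$-central structure is an isomorphism $F(Z)\triangleright M\cong M\triangleleft Z$. By the characterization recalled before the definition of $\Phi$, this isomorphism uniquely determines $\Phi(\mathcal M)$, and so yields $\Phi(\mathcal M)\cong F$ (or $F^{-1}$ depending on conventions, which is harmless since the kernel is a subgroup).

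The main obstacle is checking that the central structure really does recover $F$ via $\Phi$. Concretely, we must confirm that the defect-diagram description of $\Phi(\mathcal M)$ as the unique braided functor with $\mathcal M:\mathcal Z_{\Phi(\mathcal M)}\bt_{\mathcal Z}\mathcal C\to\mathcal C$ is an equivalence of data, so that every such 2-morphism arises from an invertible bimodule. I would argue this with the same path-encircling reasoning used in Proposition \ref{prop:Triv is an equivalence}.

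Finally, non-surjectivity of $\mathcal T$ is only a remark. It suffices to note the example $\mathcal C=\Vec_{\mathbb K}$: there $\aut_{br}(\mathcal Z)$ is trivial while $H^3(\mathbb K;\mathbb G_m)$ can be nonzero.
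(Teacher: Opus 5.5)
Your proposal is correct and follows essentially the same route as the paper. Both directions of $\ker\mathcal T=\im(\pi_0\Phi)$ come from the folding trick. In one direction you whisker $\mathcal M:\mathcal Z_{\Phi(\mathcal M)}\bt_{\mathcal Z}\mathcal C\to\mathcal C$ with $\mathcal C^{mp}$ and apply $\eta^{-1}$. In the other you unbend $\mathcal N:\mathcal T_F\to\id_{\Vec_{\mathbb K}}$ via $\mathcal C\bt(-)$ and $\epsilon^{-1}$, then read off $\Phi(\mathcal M)\cong F$ from the central structure, just as the paper does. Your extra remarks are correct supplements rather than a different approach: the homomorphism property of $\mathcal T$ (which the paper asserts in Definition \ref{def:the map T}), and the example $\mathcal C=\Vec_{\mathbb K}$ with $H^3(\mathbb K;\mathbb G_m)\neq0$ witnessing non-surjectivity.
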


As remarked in the intro, the full picture of this long exact sequence makes the role of Galois cohomology evident, as the left column is precisely $H^*(\mathbb K;\mathbb G_m)$.

\begin{proof}
    The only thing that remains to be shown is that $\im(\pi_0\Phi)=\ker(\mathcal T)$.
    This argument is essentially just the folding trick again.
    In other words, it is just a bending and unbending of an adjunction, and is thus straightforward.
    We encourage the reader to supply the pictures that correspond to the relevant composite bimodules themselves.
    
    First, suppose that $F=\Phi(\mathcal M)$.
    By Theorem \ref{thm:main invertibility theorem}, $\mathcal C$ is invertible, and so there is a $\mathcal Z$-central invertible bimodule $\eta^{-1}$ from $\mathcal C^{mp}\boxtimes_{\mathcal Z}\mathcal C$ to $\Vec_{\mathbb K}$.
    We can then form the composite
    \[\eta^{-1}\bt_{\mathcal C^{mp}\bt_{\mathcal Z}\mathcal C}(\mathcal C^{mp}\bt_{\mathcal Z}\mathcal M)\,.\]
    This is an invertible bimodule from $\mathcal T_F$ to (the identity 1-morphism of) $\Vec_{\mathbb K}$, so $\mathcal T_F$ must represent the trivial element in $\aut_{\M}(\Vec_{\mathbb K})$.

    Conversely, if $\mathcal T_F$  represents the trivial element in $\aut_{\M}(\Vec_{\mathbb K})$, then we can choose some invertible bimodule $\mathcal N$ from $\mathcal T_F$ to $\Vec_{\mathbb K}$.
    Invertibility of $\mathcal C$ implies that we can find some invertible bimodule $\epsilon^{-1}$ from $\mathcal Z$ to $\mathcal C\boxtimes\mathcal C^{mp}$.
    Next, form the composite
    \[\mathcal M:=(\mathcal C\boxtimes\mathcal N)\bt_{\mathcal C\boxtimes\mathcal C^{mp}\bt_{\mathcal Z}\mathcal C}(\,\epsilon^{-1}\,\bt_{\mathcal Z}\mathcal Z_F\bt_{\mathcal Z}\mathcal C)\,.\]
    This $\mathcal M$ is an invertible 2-morphism from $\mathcal Z_F\bt_{\mathcal Z}\mathcal C$ to $\mathcal C$, and therefore $\Phi(\mathcal M)=F$.
\end{proof}

\begin{corollary}\label{cor:H^3 condition}
    Suppose that $\mathcal C$ is 2-separable fusion over $\mathbb K$ and $\Omega\mathcal Z(\mathcal C)=\mathbb K$.
    The map $\pi_0(\Phi)$ is surjective if and only if $\mathcal T$ is the zero map.
    In particular, if $H^3(\mathbb K;\mathbb G_m)=1$, then $\pi_0(\Phi)$ is surjective.
\end{corollary}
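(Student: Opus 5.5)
This is a direct consequence of exactness at $\aut_{br}(\mathcal Z(\mathcal C))$ in Theorem \ref{thm:what's the cokernel?}, i.e.\ the equality $\im(\pi_0\Phi)=\ker(\mathcal T)$. The plan is to read both claims off that equality and then use the computation of the target group.

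For the first claim, note that $\pi_0\Phi$ is surjective precisely when $\im(\pi_0\Phi)=\aut_{br}(\mathcal Z(\mathcal C))$. By Theorem \ref{thm:what's the cokernel?}, this is equivalent to $\ker(\mathcal T)=\aut_{br}(\mathcal Z(\mathcal C))$. That in turn says that $\mathcal T$ sends every braided autoequivalence to the trivial class, i.e.\ that $\mathcal T$ is the zero (trivial) homomorphism. Definition \ref{def:the map T} guarantees that $\mathcal T$ is a group homomorphism, so "kernel is everything" and "$\mathcal T$ is trivial" genuinely coincide. Both directions of the equivalence follow at once.

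For the second claim, recall that $\mathcal T$ takes values in $\aut_{\M}(\Vec_{\mathbb K})$, which \cite[Thm. 5.9]{sanford2024invertiblefusioncategories} identifies with $H^3(\mathbb K;\mathbb G_m)$. If this group is trivial, then every homomorphism into it is trivial; in particular $\mathcal T$ is zero, and the first claim gives surjectivity of $\pi_0(\Phi)$.

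There is no real obstacle here: all the substantive work sits in Theorem \ref{thm:what's the cokernel?}, which in turn rests on the invertibility supplied by Theorem \ref{thm:main invertibility theorem}. The only point to check is bookkeeping. The hypotheses ($\mathcal C$ 2-separable and $\Omega\mathcal Z(\mathcal C)=\mathbb K$) must match those under which $\mathcal T$ is defined as a homomorphism, namely the hypotheses under which $\mathcal C$ is invertible in $\M$. They do.
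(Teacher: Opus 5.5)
Your proposal is correct and follows the same route as the paper, which states this corollary without a separate proof. It is read off from the exactness $\im(\pi_0\Phi)=\ker(\mathcal T)$ in Theorem \ref{thm:what's the cokernel?} together with the identification $\aut_{\M}(\Vec_{\mathbb K})\cong H^3(\mathbb K;\mathbb G_m)$. Your appeal to $\mathcal T$ being a homomorphism is harmless but not actually needed: ``the kernel is everything'' already means $\mathcal T$ sends every element to the trivial class.
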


\begin{remark}[cf. {\cite[Eg.s 2.8 \& 2.9]{sanford2024invertiblefusioncategories}}]\label{rem:good and bad fields}
    The property $H^3(\mathbb K;\mathbb G_m)=1$ holds when $\mathbb K$ is algebraically closed, local, or global.
    The rational function field $\mathbb C(x,y,z)$ is an example of a field where $H^3$ is nontrivial.
\end{remark}

At the moment of writing, we do not know of any examples where $\mathcal T$ is nonzero.
Regarding this issue, we pause a moment to pose a few questions.

\begin{question}
    Do there exist any fields $\mathbb K$, and fusion categories $\mathcal C$ over $\mathbb K$, such that $\im(\mathcal T)\neq0$?
\end{question}

If the answer is yes, then...

\begin{question}
    Given $\mathcal C$ over $\mathbb K$, and $F\in\Aut_{br}(\mathcal Z(\mathcal C))$, is there a method that allows for the computation of the cocycle $\omega\in H^3(\mathbb K;\mathbb G_m)$ corresponding to $\mathcal T_F$?
\end{question}

One potential route to answering these questions is investigation of previous proofs given in the literature.

\begin{question}
    Which parts of the argument given in the proof of \cite[Thm 1.1]{MR2677836} (specifically the construction of $\Psi=\Phi^{-1}$) break down when working over an arbitrary field?    
\end{question}

Taken together, Theorems \ref{thm:fiber sequence} and \ref{thm:what's the cokernel?} provide a generalization of \cite[Thm. 1.1]{MR2677836}.
In practice, if one wishes to construct a $G$-graded extension of a fusion category over $\mathbb K$, Corollary \ref{cor:5-term LES} is the most useful computation tool.
Unfortunately, it can often be difficult to determine the various maps in the exact sequence, as the following examples show.

\begin{example}\label{eg:Q-}
    There is a fusion category $\mathcal Q_-$ over $\mathbb R$ consisting of only two simple objects: $\1$ and $Y$.
    The unit is a real simple, while $\End(Y)$ is isomorphic to the quaternion algebra $\mathbb H$. 
    The fusion rules are determined by the fact that $Y\otimes Y\cong4\cdot\1$.
    This category was constructed explicitly as a non-split Tambara-Yamagami category over $\mathbb R$ in \cite{plavnik2023tambarayamagami}, and in that notation $\mathcal Q_-=\mathcal C_{\mathbb H}(\mathbf 1,\chi,-1/2)$, where $\chi$ is the unique nontrivial bicharacter on the trivial group.
    Upon base extension to $\mathbb C$, this category becomes $\Vec_{\mathbb C}(\mathbb Z/2\mathbb Z)$.
    This category was also discussed before in \cite[Ex. 3.11]{etingofDescentAndForms} and \cite[Rmk. 2.14]{johnson-freydSpinStatistics}.

    The Drinfeld center $\mathcal Z(\mathcal Q_-)$ has four simple objects: $\1,M,Y$, and $MY:=M\otimes Y$.
    As an object $M$ is just $\1$, but with a half-braiding that acts by $-1$ when braiding across $Y$.
    The object $Y$ is the same from $\mathcal Q_-$, but now equipped with a trivial half-braiding.
    The center is $(\mathbb Z/2\mathbb Z)^2$-graded, and so $\aut_{\otimes}(\id_{\mathcal Z(\mathcal Q_-)})\cong(\mathbb Z/2\mathbb Z)^2$.
    Since $\Br(\mathbb R)\cong\mathbb Z/2\mathbb Z$, the exact sequence of Corollary \ref{cor:5-term LES} becomes
    \[\mathbb Z/2\mathbb Z\hookrightarrow(\mathbb Z/2\mathbb Z)^2\to\mathbb Z/2\mathbb Z\to\brpic(\mathcal Q_-)\to\aut_{br}\big(\mathcal Z(\mathcal C)\big)\,.\]
    It follows that the map $\Br(\mathbb R)\to\brpic(\mathcal Q_-)$ must be zero.
    This means that the $\mathcal Q_-$ bimodule category $\mathcal Q_-\boxtimes(\mathbb H\text{-}\Mod)$ is equivalent to $\mathcal Q_-$ itself.
    This equivalence just swaps the roles of $\1$ and $Y$.
    
    Since $H^3(\mathbb R;\mathbb G_m)=1$, Corollary \ref{cor:H^3 condition} implies that $\brpic(\mathcal Q_-)\cong$ \linebreak$\aut_{br}(\mathcal Z(\mathcal Q_-))$.
    The group $\aut_{br}(\mathcal Z(\mathcal Q_-))$ is easier to calculate directly, though only marginally.
    If $(F,J)$ is a braided equivalence of $\mathcal Z(\mathcal Q_-)$, then it has to preserve endomorphism algebras and respect braidings.
    This implies that the underlying linear functor $F$ must fix all objects.
    The Skolem-Noether theorem implies that the action of $F$ on $\End(Y)\cong\mathbb H$ must be inner, and this is equivalent to saying that $F$ is naturally isomorphic to the identity.
    Thus, the only interesting data must lie in the tensorator $J_{A,B}:F(A)\otimes F(B)\to F(A\otimes B)$.

    Naturality of $J$ implies that it must commute with all quaternions, and therefore must be real-valued.
    Thus, the monoidal data can be computed as if this category were just $\Vec_{\mathbb R}((\mathbb Z/2\mathbb Z)^2)$.
    The potential tensorators are classified by $H^2((\mathbb Z/2\mathbb Z)^2;\mathbb R^\times)\cong(\mathbb Z/2\mathbb Z)^3$.
    The cohomology classes of the following cocycles
    \begin{align*}
        J^a_{M^iY^j,M^kY^\ell}&=(-1)^{ik}\cdot\id\,,\\
        J^b_{M^iY^j,M^kY^\ell}&=(-1)^{j\ell}\cdot\id\,,\;\text{and}\\
        J^c_{M^iY^j,M^kY^\ell}&=(-1)^{i\ell}\cdot\id\;,\;
    \end{align*}
    form a $\mathbb Z/2\mathbb Z$-basis for this space.
    Since the underlying functors are all the identity, composition of such functors corresponds to multiplying these scalar functions.
    Direct computation shows that $J^a$ and $J^b$ respect the braiding, while $J^c$ does not.
    Therefore, we can conclude that $\brpic(\mathcal Q_-)\cong\aut_{br}(\mathcal Z(\mathcal Q_-))\cong(\mathbb Z/2\mathbb Z)^2$.
\end{example}

\begin{example}\label{eg:Q+}
    Similar to the previous example, there is a category $\mathcal Q_+:=\mathcal C_{\mathbb H}(\mathbf 1,\chi,+1/2)$.
    This category has identical fusion rules to $\mathcal Q_-$, but the associator $(Y\otimes Y)\otimes Y\to Y\otimes (Y\otimes Y)$ differs by a sign.
    This category is a real form of $\Vec_{\mathbb C}^\omega(\mathbb Z/2\mathbb Z)$, where the associator is determined by the unique nonzero class in $\omega\in H^3(\mathbb Z/2\mathbb Z;\mathbb C^\times)\cong\mathbb Z/2\mathbb Z$.

    The Drinfeld center $\mathcal Z(\mathcal Q_+)$ has three simple objects: $\1,X$, and $V$.
    Both $\1$ and $X$ are real and invertible.
    The underlying object of $V$ is $Y$, and the half-braiding $V\otimes Y\to Y\otimes V$ is, up to isomorphism, multiplication by $i$ on the $V$ factor.
    Since $\End(V)$ consists of only those morphisms in $\End(Y)$ that commute with the half-braiding, it follows that $\End(V)\cong\mathbb C\subset\mathbb H\cong\End(Y)$.
    The fusion rules are determined by the formulas $X\otimes V\cong V$, and $V\otimes V\cong2\cdot(\1\oplus X)$.

    It is immediate from the fusion rules that $\mathcal Z(\mathcal Q_+)$ is $\mathbb Z/2\mathbb Z$-graded.
    Since there are exactly two invertible objects, and as before $H^3(\mathbb R;\mathbb G_m)=1$, the exact sequence from \ref{cor:5-term LES} becomes
    \[\mathbb Z/2\mathbb Z\hookrightarrow\mathbb Z/2\mathbb Z\to\mathbb Z/2\mathbb Z\to\brpic(\mathcal Q_+)\twoheadrightarrow\aut_{br}\big(\mathcal Z(\mathcal C)\big)\,.
    \]
    We can then conclude that the first map must be an isomorphism, so we have an exact sequence
    \[\mathbb Z/2\mathbb Z\hookrightarrow\brpic(\mathcal Q_+)\twoheadrightarrow\aut_{br}\big(\mathcal Z(\mathcal C)\big)\,.
    \]
    Thus we find that the $\mathcal Q_+$ bimodule given by $\mathcal Q_+\boxtimes(\mathbb H\text{-}\Mod)$ is not equivalent to the trivial bimodule.
    The two bimodules $\mathcal Q_+$ and $\mathcal Q_+\boxtimes(\mathbb H\text{-}\Mod)$ look incredibly similar: they are equivalent as left modules, and also as right modules!
    However, bimodule categories also have middle associators, and here these two bimodules differ by a sign.

    Close analysis of \cite[Thm. 6.10, \& Ex. 6.11]{plavnik2023tambarayamagami} reveals that $\mathcal Z(\mathcal Q_+)\simeq \mathcal C_{\mathbb C}(\mathbf 1,\id_{\mathbb C},\chi,-1/2)$.
    By \cite[Lemma 6.18]{GJS}, since there are no nontrivial automorphisms of $\Inv(\mathcal Z(\mathcal Q_+))\cong\mathbb Z/2\mathbb Z$, the monoidal automorphisms of $\mathcal Z(\mathcal Q_+)$ form a group isomorphic to $(\mathbb Z/2\mathbb Z)^2$.
    Here the functors are parameterized, up to monoidal equivalence by $F(\xi,\lambda)$, where $\xi\in\Gal(\mathbb C/\mathbb R)$ controls the action of the functor on $\End(V)$, and $\lambda\in\{1,i\}$.
    From our description of the half-braiding on $X$ and $V$, it follows that $\sigma(w)=1$ and $\sigma_3(1)=i$ in the notation of \cite[Proposition 6.19]{GJS} and therefore the only braided functors $F(\xi,\lambda)$ must satisfy $\lambda=1$ (by their Equation 65) and $\xi=\id_{\mathbb C}$ (by their Equation 66).
    In summary, $\aut_{br}(\mathcal Z(\mathcal Q_+))\cong1$, and therefore $\brpic(\mathcal Q_+)\cong\mathbb Z/2\mathbb Z$.
\end{example}

The two examples above indicate situations where it is hard to tell, but typically it should be expected that the map $\Br(\mathbb K)\to\brpic(\mathcal C)$ is nonzero.
To see why this is, consider a 2-separable fusion category $\mathcal C$ over $\mathbb K$.
In general, the collection of simple objects in $\mathcal C$ will have endomorphism algebras that are finite dimensional division algebras $\mathbb H_i$ over separable field extensions $\mathbb L_i$ of $\mathbb K$.
This collection forms a multiset of isomorphism classes of algebras that can be recorded as an element of the Brauer \emph{ring} $B(\mathbb K)$ (see for example \cite{MR850345}).
We will call this element the \emph{algebra profile} of $\mathcal C$, and denote it by $[\mathcal C]$.
For example, for the quaternion group $Q_8$, we have that $[\Rep_{\mathbb R}(Q_8)]=4[\mathbb R]+[\mathbb H]$.

If $[\mathbb D]\in\Br(\mathbb K)$ has trivial image in $\brpic(\mathcal C)$, this means that $\mathcal C\simeq\mathcal C\boxtimes(\mathbb D\text{-}\Mod)$ as $\mathcal C$ bimodule categories.
In particular, this implies that they must have the same algebra profiles, but this means that $[\mathcal C]\cdot[\mathbb D]=[\mathcal C]$ in $B(\mathbb K)$.
For this to happen, we would need $[\mathcal C]$ to be a sum of orbits under the action of the subgroup $\langle[\mathbb D]\rangle$.

In the previous examples, this is exactly what happened, because $[\mathcal Q_\pm]=[\mathbb R]+[\mathbb H]\in B(\mathbb R)$, but this is the exception, not the rule.
For example $[\Rep_{\mathbb R}(Q_8)]\cdot[\mathbb H]=4[\mathbb H]+[\mathbb R]\neq[\Rep_{\mathbb R}(Q_8)]$.

Usually, a quick glance at the algebra profile will be enough to distinguish the two categories.
However, when $[\mathcal C]\cdot[\mathbb D]=[\mathcal C]$, more data needs to be checked, as the example of $\mathcal Q_+$ demonstrates.

\begin{example}
    Let us momentarily work over the complex numbers $\mathbb K=\mathbb C$.
    The results of \cite{MR2677836} show that $G$-graded extensions of $\Vec_{\mathbb C}$ are classified by homotopy classes of maps $BG\to B\BrPic(\Vec_{\mathbb C})$.
    Since $H^*(\mathbb C;\mathbb G_m)$ is $\mathbb C^\times$ in degree $3$ and trivial otherwise, it follows that $B\BrPic(\Vec_{\mathbb C})$ is an Eilenberg-Mac Lane space of type $K(\mathbb C^\times,3)$.
    It follows that $G$-extensions of $\Vec_{\mathbb C}$ are in one-to-one correspondance with cohomology classes $H^3(G;\mathbb C^\times)$.
    This corresponds precisely\footnote{The classification of these pointed categories up to monoidal equivalence would involve quotienting out by the action of $\aut(G)$, but we are only interested here in equivalences that respect the $G$-grading.} with the classification of possible associators on $\Vec_{\mathbb C}(G)$.

    Working now over $\mathbb K=\mathbb R$, the classification of $G$-graded extensions of $\Vec_{\mathbb R}$ is in terms of homotopy classes of maps $BG\to B\BrPic(\Vec_{\mathbb R})$.
    The only nontrivial homotopy groups of $B\BrPic(\Vec_{\mathbb R})$ are $\pi_1=\Br(\mathbb R)=\mathbb Z/2\mathbb Z$, and $\pi_3=\mathbb R^\times$.
    The homotopy type of such a space is classified by a single Postnikov $k$-invariant $k^{4}\in H^4(\mathbb Z/2\mathbb Z;\mathbb R^\times)\cong\mathbb Z/2\mathbb Z$.
    By composing with the map $B\BrPic(\Vec_{\mathbb R})\to K(\mathbb Z/2\mathbb Z,1)$, we find that any map $BG\to B\BrPic(\Vec_{\mathbb R})$ determines a class $[f]\in H^1(G;\mathbb Z/2\mathbb Z)=\Hom(G,\mathbb Z/2\mathbb Z)$.
    In this way, $G$-graded extensions of $\Vec_{\mathbb R}$ correspond to homomorphisms $f:G\to\mathbb Z/2\mathbb Z$, together with a solution $\tilde{f}$ to the lifting problem below.
    \[\begin{tikzcd}[ampersand replacement=\&]
    	\& {B\mathcal Br\mathcal Pic(\mathrm{Vec}_{\mathbb R})} \\
    	BG \& {K(\mathbb Z/2\mathbb Z\,,\,1)} \& {K(\mathbb R^\times,\,4)}
    	\arrow[from=1-2, to=2-2]
    	\arrow["{\tilde{f}}", dotted, from=2-1, to=1-2]
    	\arrow["f"', from=2-1, to=2-2]
    	\arrow["{k^4}"', from=2-2, to=2-3]
    \end{tikzcd}\]
    Such lifts only exist if $[f^*(k^4)]\in H^4(G;\mathbb R^\times)$ is zero.
    Consider the universal example, where $G=\mathbb Z/2\mathbb Z$ and $f=\id_{BG}$.
    The lifting problem is asking if there exist $\mathbb Z/2\mathbb Z$-graded extensions of $\Vec_{\mathbb R}$ such that the nontrivial component is spanned by a quaternionic simple object.
    The categories $\mathcal Q_\pm$ from Examples \ref{eg:Q-} and \ref{eg:Q+} show that such extensions do in fact exist, and this means that the class $[\id^*(k^4)]=[k^4]\in H^4(\mathbb Z/2\mathbb Z;\mathbb R^\times)$ must be trivial.
    In other words, the classifying space $B\BrPic(\Vec_{\mathbb R})$ is a split extension of the form $K(\mathbb R^\times,3)\times K(\mathbb Z/2\mathbb Z,1)$.

    From this analysis, it follows that all $G$-graded extensions of $\Vec_{\mathbb R}$ are classified by pairs $(f,\varphi)$, where $f:G\to\mathbb Z/2\mathbb Z$, and $\varphi\in H^3(G;\mathbb R^\times)$.
    The fusion categories constructed as such extensions will have one simple object $X_g$ for every $g\in G$.  The algebra $\End(X_g)$ will be $\mathbb R$ if $f(g)=0$, and $\mathbb H$ if $f(g)=1$.
    The fusion rules will be
    \[X_g\otimes X_h\cong 4^{f(g)\cdot f(h)}\cdot X_{gh}\,,\]
    and the associator will be determined by the 3-cocycle $\varphi$.
    Upon base extension to $\mathbb C$, categories of this form become pointed fusion categories, and thus they are Galois-twisted real forms of pointed categories in the sense of \cite{etingofDescentAndForms}.
\end{example}

\begin{remark}
    It would be interesting to determine the homotopy type of $\BrPic(\Vec_{\mathbb K})$ for each field $\mathbb K$.
    The above example shows that this would be equivalent to computing the Postnikov classes $k^4\in H^4(\Br(\mathbb K);\mathbb K^\times)$.
    The only reason we were able to verify triviality of $k^4$ for $\mathbb R$ was that the categories $\mathcal Q_\pm$ had already been shown to exist in \cite{plavnik2023tambarayamagami}.
    
\end{remark}

\begin{example}\label{eg:BrPic of Q-}
    From Example \ref{eg:Q-}, we know that the invertible bimodules for $\mathcal Q_-$ form a Klein-four group that is in bijective correspondence with $\aut_{br}(\mathcal Z)$.
    What do these bimodules look like?

    Since $\mathcal Q_-$ itself is $\mathbb Z/2\mathbb Z$-graded, there is a nontrivial monoidal autoequivalence $(F,J):\mathcal Q_-\to\mathcal Q_-$ that acts by the identity functor, and has $J_{Y,Y}=-\id_{Y\otimes Y}$.
    Note that this is only nontrivial because we are working over $\mathbb R$, since $H^2(\mathbb Z/2\mathbb Z;\mathbb R^\times)\cong\mathbb Z/2\mathbb Z$, whereas $H^2(\mathbb Z/2\mathbb Z;\mathbb C^\times)=0$.
    Using this autoequivalence, we can form an invertible bimodule $(\mathcal Q_-)_F$, with the right action twisted by $F$.
    Such bimodules that are equivalent as modules to the regular left module are called outer, and they are generally easier to understand than generic invertible bimodules.
    Inducing this autoequivalence $(F,J)$ up to the center, we find that this corresponds to the equivalence $(\Id_{\mathcal Z},J^b)$ from Example \ref{eg:Q-}.
    
    What about the other functor $A:=(\Id_{\mathcal Z},J^a)$?
    We can find the corresponding invertible bimodule by using the folding trick as in the proof of Theorem \ref{thm:what's the cokernel?}.
    We know that $\mathcal T_A$ is Morita trivial, and any invertible bimodule from $\mathcal T_A$ to $\Vec_{\mathbb R}$ can be `unbent' to provide the desired bimodule category for $\mathcal Q_-$.
    
    Consider a simple tensor $U\boxtimes_{\mathcal Z_{A}}V$ in $\mathcal T_A=(\mathcal Q_-)^{mp}\boxtimes_{\mathcal Z_A}\mathcal Q_-$.
    Since every object of $\mathcal Q_-$ admits a lift to $\mathcal Z$, we find that $U\boxtimes_{\mathcal Z_A}V\cong\1\boxtimes_{\mathcal Z_A}UV$, and so all simple objects in $\mathcal T_A$ can be written as summands of either $\1\boxtimes_{\mathcal Z_A}\1$ or $\1\boxtimes_{\mathcal Z_A}Y$.
    The real algebra $\End_{\mathcal T_A}(\1\boxtimes_{\mathcal Z_A}\1)$ is generated under composition by a single morphism
    \[f:=\big(\1\boxtimes_{\mathcal Z_A}\1\to\1\boxtimes_{\mathcal Z_A}(M\otimes\1)\to(\1\otimes M)\boxtimes_{\mathcal Z_A}\1\to\1\boxtimes_{\mathcal Z_A}\1\big)\,,\]
    which is only determined up to a real scalar multiple.
    The composition $f^2$ can be reduced using string diagram calculus to be a scalar multiple of the identity.
    This reduction involves the tensor product of two copies of $M$, and thus produces a factor of $J^a_{M,M}=-1$, and so $f^2=-\lambda\cdot\id$.
    Adjusting $f$ by a real scalar factor only changes $f^2$ by a positive scalar factor, and thus we may assume that $f^2=-\id$.
    Thus we can conclude that $\End(\1\boxtimes_{\mathcal Z_A}\1)\cong\mathbb C$.

    A similar argument shows that $\End(\1\boxtimes_{\mathcal Z_A}Y)$ is isomorphic to $\mathbb C\otimes\mathbb H\cong M_2(\mathbb C)$.
    This means that $\mathcal T_A$ has two simple objects that are both complex.
    The only Morita trivial multifusion category over $\mathbb R$ of this form is the category of complex bimodules $\Bim_{\Vec_{\mathbb R}}(\mathbb C)$ from Example \ref{eg:Bim(C)}.
    The invertible bimodule from $\mathcal T_A$ to $\Vec_{\mathbb R}$ is thus equivalent to $\Vec_{\mathbb C}$.
\end{example}

\begin{example}
    The homotopy groups of $\BrPic(\mathcal Q_-)$ are all 2-torsion, so if we try to build an extension $BG\to B\BrPic(\mathcal Q_-)$ with $|G|$ odd, then the resulting category will just be $\mathcal Q_-\boxtimes\Vec_{\mathbb R}(G)$.
    In light of this, let us try to build an interesting $G=\mathbb Z/2\mathbb Z$ extension.
    Let $\mathcal X$ denote the invertible $\mathcal Q_-$ bimodule category $\Vec_{\mathbb C}$ from the previous example, and let $X$ be the complex simple object that generates $\mathcal X$.
    Assigning $1\mapsto\mathcal X$ determines a homomorphism $G\to\brpic(\mathcal Q_-)$, and this determines a map $f^{(1)}$ from the 1-skeleton $BG^{(1)}$ to $B\BrPic(\mathcal Q_-)$ that we can use to begin constructing our extension.

    Since we used a homomorphism, $f^{(1)}$ can be extended to the 2-skeleton of $BG$.
    Since $BG\simeq K(\mathbb Z/2,1)\simeq\mathbb RP^\infty$ has a model with exactly one cell in each dimension, at each stage of this process, we only need to choose where a single cell goes, then determine the obstruction.
    The unique 2-cell in which we are interested is the relation $1+1=0$ in $G$, which corresponds to choosing a bimodule equivalence $M_{1,1}:\mathcal X\boxtimes_{\mathcal Q_-}\mathcal X\to\mathcal Q_-$.
    In $\Aut_{br}(\mathcal Z)$, this corresponds to choosing a monoidal natural isomorphism $A\circ A\to\Id_{\mathcal Z}$.
    Here we can choose the identity, because $A^2=\Id_{\mathcal Z}$ on the nose.
    
    The inclusion $\pi_1\Phi:\mathbb Z/2\mathbb Z\to(\mathbb Z/2\mathbb Z)^2$ is split with trivial action, so the induced map on coefficients $(\pi_1\Phi)_*:H^3(G;\Inv(\mathcal Z))\to$\linebreak$ H^3(G;\aut_{\otimes}(\Id_{\mathcal Z}))$ must also be an injective.
    We must verify that the obstruction $O_3\in H^3(G;\Inv(\mathcal Z))$ is trivial, but all values of $(\pi_1\Phi)_*O_3$ are just a compositions of identity natural transformations, so $O_3$ must be trivial by injectivity of $(\pi_1\Phi)_*$.
    It follows that our map can be extended to the three skeleton of $BG$.

    The $O_4$ obstruction is also trivial, though at this time, the author is unaware of any technique more convenient that direct computation.
    Easier to compute is the Pontryagin-Whitehead map $PW:H^2(G;\Inv(\mathcal Z))\to H^4(G;\mathbb R^\times)$.
    Since $G$ is finite, $\mathbb R^\times$ coefficients are the same as $\mathbb Z/2\mathbb Z$ coefficients.
    The action of $G$ on $\mathcal Z^\times\simeq\Vec_{\mathbb R}(\mathbb Z/2\mathbb Z)$ is by the (restriction of) the functor $A$.
    The nontrivial class in $H^2$ is represented by the cocycle $\varphi(g,h)=M^{gh}$.
    The definition of $PW(\varphi)$ given in \cite[Def. 8.11]{MR2677836} involves a composition of maps, all but two of which can be made trivial.
    The two nontrivial maps involve the action of the tensorator $J^a$ on $\varphi$, and they result in factors $(-1)^{(g+h)fghk}$ and $(-1)^{(h+k)fghk}$.
    The product becomes
    \[(-1)^{fghk(g+2h+k)}=(-1)^{fghk(g+k)}=(-1)^{fg^2hk+fghk^2}=(-1)^{2fghk}\;=\;1\,.\]
    From this it follows (see \cite[Prop. 8.15]{MR2677836}) that the $O_4$ obstruction will remain trivial upon changing the equivalence $M_{1,1}$.
    
    Thus, starting from the homomorphism $G\to\brpic(\mathcal Q_-)$, there are four options: two choices for the tensor products $M_{1,1}$ (a torsor over $H^2(G;\mathbb Z/2\mathbb Z)$), and two choices for the associators (a torsor over $H^3(G;\mathbb R^\times)$).
    All of these result in distinct $\mathbb Z/2\mathbb Z$-graded extensions of $\mathcal Q_-$.
    One of these extensions was described in \cite[Apx. A]{MR4934638}, where it arose as a braided real form of $\mathcal Z(\Vec_{\mathbb R}(\mathbb Z/2\mathbb Z)$.
    This category was shown to be nondegenerately braided, but not a center, and therefore it represents a (conjecturally unique!) nontrivial class in $\ker(\mathcal Witt(\Vec_{\mathbb R})\to\mathcal Witt(\Vec_{\mathbb C}))$.
\end{example}

\section{A conjectural generalization of the fiber sequence}\label{sec:Conjectural extension}

In this section, which is independent of our main results, we describe a conjecture due to Jones and Reutter that, if true, would generalize the exact sequence of Theorem \ref{thm:what's the cokernel?}.

In \cite[Thm. 3.4]{MR3354332}, Grossman, Jordan, and Snyder establish a homotopy fiber sequence
\[\mathcal C^\times\to\Aut_{\otimes}(\mathcal C)\to\Out(\mathcal C)\,,\]
where $\Out(\mathcal C)$ is the subgroupoid of $\BrPic(\mathcal C)$ consisting of all bimodules that are trivial as left module categories.
This result is a categorification of the Rosenberg-Zelinsky exact sequence (see \emph{e.g.} \cite{MR2658180} and \cite{galindoTensorFunctorsMorita2016}) for algebras, which itself is a generalization of the classical Inn-Aut-Out exact sequence:
\[Z(G)\hookrightarrow G\to\aut(G)\twoheadrightarrow\out(G)\]
for groups $G$.
Given this persistent pattern, it is reasonable to suspect this result can be categorified further.
The following conjecture was suggested to me by Corey Jones.

\begin{conjecture}[\cite{jonesPrivCom}]\label{conj:RZForF2Cs}
    Under suitable assumptions on a monoidal $\mathbb K$-linear 2-category $\mathfrak C$, there is a homotopy fiber sequence
    \[\mathfrak C^\times\to\Aut_{\otimes}(\mathfrak C)\to\Out(\mathfrak C)\,.\]
\end{conjecture}

If this conjecture is true, it would generalize Theorems \ref{thm:fiber sequence} and \ref{thm:what's the cokernel?} in the following sense.

\begin{corollary}[subject to Conjecture \ref{conj:RZForF2Cs}]
    If $\mathfrak C=\Mod(\mathcal B)$ for $\mathcal B$ a braided (separable) fusion 1-category over $\mathbb K$, then there is a fiber sequence
    \[\Pic(\mathcal B)\to\Aut_{br}(\mathcal B)\to\Out(\mathfrak C)\,.\]
    In particular, if $\mathcal B=\mathcal Z(\mathcal C)$ for some fusion 1-category $\mathcal C$ with $\Omega\mathcal Z\mathcal C=\mathbb K$, this becomes
    \[\BrPic(\mathcal C)\to\Aut_{br}\big(\mathcal Z(\mathcal C)\big)\to\Out(\mathfrak C)\,,\]
    which is a delooping of the fiber sequence in Theorem \ref{thm:fiber sequence}.
    Furthermore, the long exact sequence on homotopy groups recovers the extension of the fiber sequence described in Theorem \ref{thm:what's the cokernel?}.
\end{corollary}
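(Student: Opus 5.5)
The plan is to apply Conjecture \ref{conj:RZForF2Cs} to $\mathfrak C=\Mod(\mathcal B)$ and then identify each of the three terms concretely.

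\emph{Identifying the terms.}
\begin{enumerate}
\item For the first term, note that $\mathfrak C^\times$ consists of invertible objects of the monoidal 2-category $\Mod(\mathcal B)$, meaning invertible $\mathcal B$-module categories. Since $\mathcal B$ is braided, left and right modules agree, so this is exactly $\Pic(\mathcal B)$ together with its equivalences and natural isomorphisms.
\item For the second term, monoidal autoequivalences of $\Mod(\mathcal B)$ should correspond to braided autoequivalences of $\mathcal B$. One recovers $\mathcal B$ as $\Omega\mathfrak C$, the endomorphisms of the unit, and the braiding arises via Eckmann--Hilton from the monoidal structure of $\mathfrak C$. Conversely, a braided $F$ induces the monoidal 2-functor of restriction of scalars.
\item The map $\Pic(\mathcal B)\to\Aut_{br}(\mathcal B)$ is then the conjugation map coming from the conjecture: $\mathcal M\mapsto$ (the induced action of $\mathcal B$ on $\mathcal M$ from the other side). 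This is the $\alpha$-induction map of ENO.
\end{enumerate}

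\emph{The case $\mathcal B=\mathcal Z(\mathcal C)$.} Here I would use Theorem \ref{thm:main invertibility theorem}: $\mathcal C$ is an invertible 1-morphism $\mathcal Z\to\Vec_{\mathbb K}$ in $\M$. Hence $\Pic(\mathcal Z)=\Aut_{\M}(\id_{\mathcal Z})$ is equivalent, by conjugating with $\mathcal C$, to $\Aut_{\M}(\id_{\Vec_{\mathbb K}}\circ\cdots)$. More precisely, it is equivalent to invertible $\mathcal C\bt\mathcal C^{mp}$-module categories, using the equivalence $\epsilon$ of Corollary \ref{cor:LoopsZ=K Gives an invertible bimodule}. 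These are exactly $\BrPic(\mathcal C)$, which is the ENO identification $\Pic(\mathcal Z(\mathcal C))\simeq\BrPic(\mathcal C)$ generalized. Under this identification, the map to $\Aut_{br}(\mathcal Z)$ becomes $\Phi$, so we get a sequence
\[\BrPic(\mathcal C)\xrightarrow{\Phi}\Aut_{br}(\mathcal Z)\to\Out(\mathfrak C).\]
Looping once, that is, taking fibers over the basepoint, gives
\[\Omega\Out(\mathfrak C)\to\BrPic(\mathcal C)\to\Aut_{br}(\mathcal Z).\]
Comparing with Theorem \ref{thm:fiber sequence} via uniqueness of homotopy fibers identifies $\Omega\Out(\mathfrak C)\simeq\BrPic(\Vec_{\mathbb K})$.

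\emph{Extending the long exact sequence.} Finally, $\pi_0\Out(\mathfrak C)$ maps to Morita classes of invertible fusion categories: an outer bimodule of $\Mod(\mathcal Z)$ twisted by $F$ is sent to $\mathcal T_F$ via Definition \ref{def:the map T}. The long exact sequence then ends in $\pi_0\Out$. One checks this is compatible with $\mathcal T$ landing in $H^3(\mathbb K;\mathbb G_m)$, using the folding argument in the proof of Theorem \ref{thm:what's the cokernel?}.

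\emph{Main obstacle.} I expect the hardest step to be verifying that the connecting maps agree, that is, that the conjecture's map really coincides with $\Phi$ and that $\pi_0\Out\to\aut_{\M}(\Vec_{\mathbb K})$ matches $\mathcal T$. My first attempt would be to describe both maps by the same string diagrams in $\M$.
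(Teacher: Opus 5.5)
There is a genuine gap. Your outline follows the paper's sketch, but the step $\Pic(\mathcal Z)\simeq\BrPic(\mathcal C)$ rests on a false identification.

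\textbf{The wrong identification.} $\Aut_{\M}(\id_{\mathcal Z})$, the invertible 2-morphisms of the identity 1-morphism of $\mathcal Z$, is not $\Pic(\mathcal Z)$. Its objects are $\mathcal Z$-bimodules with a $\mathcal Z\bt\mathcal Z^{rev}$-central structure. Since $\mathcal C:\mathcal Z\to\Vec_{\mathbb K}$ is an equivalence in $\M$, conjugation gives $\Aut_{\M}(\id_{\mathcal Z})\simeq\Aut_{\M}(\id_{\Vec_{\mathbb K}})=\BrPic(\Vec_{\mathbb K})$. That is the homotopy fiber of $\Phi$, not its source; compare $\hofib(\Phi)\simeq\Aut_{\M}(\mathcal C)\simeq\Aut_{\M}(\id_{\Vec_{\mathbb K}})$ in the proof of Theorem \ref{thm:fiber sequence}. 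Concretely, for $\mathcal C=\Vec_{\mathbb C}(\mathbb Z/2\mathbb Z)$ one has $\pi_0\Pic(\mathcal Z)\cong\mathbb Z/2\mathbb Z$, while $\Br(\mathbb C)=0$.

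\textbf{The fallback through $\epsilon$.} This does not close the gap either. The Morita equivalence $\mathcal Z\sim\mathcal C\bt\mathcal C^{mp}$ only gives an equivalence of 2-categories $\Mod(\mathcal Z)\simeq\Mod(\mathcal C\bt\mathcal C^{mp})$. Invertibility is measured by $\bt_{\mathcal Z}$ on one side and $\bt_{\mathcal C}$ on the other, so you also need the equivalence to intertwine these, and you do not argue this.

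\textbf{The fix.} Regard $\mathcal Z$ as a 1-morphism $\Vec_{\mathbb K}\to\mathcal Z$, via $\mathcal Z^{rev}\to\mathcal Z(\mathcal Z)$. Its 2-endomorphisms are $\mathcal Z$-bimodules with $\mathcal Z^{rev}$-central structure, i.e.\ exactly $\mathcal Z$-module categories composed by $\bt_{\mathcal Z}$. Postcomposition with the invertible $\mathcal C$ is an equivalence $\Hom_{\M}(\Vec_{\mathbb K},\mathcal Z)\simeq\Hom_{\M}(\Vec_{\mathbb K},\Vec_{\mathbb K})$. It sends $\mathcal Z\mapsto\mathcal C$ and $\mathcal N\mapsto\mathcal C\bt_{\mathcal Z}\mathcal N$. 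The automorphisms of $\mathcal C$ as a 1-morphism $\Vec_{\mathbb K}\to\Vec_{\mathbb K}$ are $\BrPic(\mathcal C)$, with the correct monoidal structure. This is what the paper's one-line appeal to the invertibility of $\mathcal C$ amounts to.

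\textbf{The third term.} With that repaired, your treatment of $\Out(\mathfrak C)$ takes a genuinely different route from the paper. You rotate the conjectural sequence and use uniqueness of homotopy fibers plus Theorem \ref{thm:fiber sequence} to get $\Omega\Out(\mathfrak C)\simeq\hofib(\Phi)\simeq\BrPic(\Vec_{\mathbb K})$. The paper instead computes $\pi_1\Out(\mathfrak C)$ directly as the invertible objects of $\mathcal Z(\Mod(\mathcal Z))$. Witt-triviality of $\mathcal Z$ identifies this center with $\mathcal Z(2\Vec_{\mathbb K})\simeq2\Vec_{\mathbb K}$, giving $\Br(\mathbb K)$.
\begin{itemize}
\item Your route recovers the whole loop 2-groupoid, not just $\pi_1$. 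But it works only once the conjecture's map $\Pic(\mathcal Z)\to\Aut_{br}(\mathcal Z)$ has been identified with $\Phi$, so everything hinges on the step you flag as the main obstacle.
\item The paper's computation does not depend on that identification, so it is an independent check that the new sequence deloops the old one.
\end{itemize}

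\textbf{The last step.} Checking compatibility with $\mathcal T$ `using the folding argument' from the proof of Theorem \ref{thm:what's the cokernel?} would presuppose the exactness you are supposed to recover. The paper instead conjugates by the Witt equivalence $\mathcal C$ to see directly that twisting $\mathfrak C$ by $\mathcal Z_F$ corresponds to tensoring with $\mathcal T_F$.
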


\begin{proof}[proof (sketch)]
    The invertible objects in $\mathfrak C=\Mod(\mathcal B)$ are precisely the invertible module categories for $\mathcal B$, \emph{aka} $\Pic(\mathcal B)$.
    If $\mathcal B=\mathcal Z(\mathcal C)$, then $\Pic(\mathcal Z(\mathcal C))\simeq\BrPic(\mathcal C)$ by the invertibility of $\mathcal C$ as a 1-morphism in $\M$, which follows from Theorem \ref{thm:main invertibility theorem}.

    Autoequivalences of $\Mod(\mathcal B)$ are invertible $\mathcal B$ bimodule categories by (a higher version of) 
    Eilenberg-Watts.
    For the autoequivalence to be monoidal, it must be of the form $\mathcal B_F$, where $F$ is a braided autoequivalence of $\mathcal B$.
    
    By definition, $\pi_1\Out(\Mod(\mathcal Z(\mathcal C)))$ consists of the autoequivalences of $\Mod(\mathcal Z(\mathcal C))$ as a bimodule over itself.
    This is the same as the group of invertible objects (up to equivalence) in the center of $\Mod(\mathcal Z(\mathcal C))$.

    Since $\mathcal Z(\mathcal C)$ is Witt-trivial, it follows that $\Mod(\mathcal Z(\mathcal C))$ is Morita equivalent to $2\Vec_{\mathbb K}$.
    This implies that $\mathcal Z(\Mod(\mathcal Z(\mathcal C)))$ is braided 2-equivalent to $\mathcal Z(2\Vec_{\mathbb K})\simeq2\Vec_{\mathbb K}$,
    so we find that
    \[\pi_1\Out(\Mod(\mathcal Z(\mathcal C)))\simeq\Inv(2\Vec_{\mathbb K})\simeq\Br(\mathbb K)\,.\]

    By construction of the original fiber sequence, the map $\Aut_{\otimes}(\mathfrak C)\to\Out(\mathfrak C)$ is $\mathcal F\mapsto\mathfrak C_{\mathcal F}$, with the right bimodule structure twisted by $\mathcal F$.
    We already know that $\mathfrak C=\Mod(\mathcal B)$ implies that $\mathcal F\simeq\mathcal B_F\otimes_{\mathcal B}(-)$ for some braided equivalence $F$ of $\mathcal B$.
    When $\mathcal B=\mathcal Z=\mathcal Z(\mathcal C)$, we are just tensoring with $\mathcal Z_F$ on the left.
    Conjugating by the Witt equivalence $\mathcal C:\mathcal Z\to\Vec_{\mathbb K}$ shows that this map is equivalent to the operation $\mathcal T_F\bt(-)$ of tensoring with the invertible category $\mathcal T_F$ of Theorem \ref{thm:what's the cokernel?}.
\end{proof}

\bibliography{main}{}
\bibliographystyle{alpha}

\end{document}

%% file: TestStyle.tikzstyles
\tikzstyle{wc}=[fill=white, draw=black, shape=circle]
\tikzstyle{ws}=[fill=white, draw=black, shape=rectangle]
\tikzstyle{bd}=[fill=black, draw=black, shape=circle, inner sep=0pt, minimum size=4pt]
\tikzstyle{wd}=[fill=white, draw=black, shape=circle, inner sep=0pt, minimum size=4pt]
\tikzstyle{bt-up}=[fill=black, draw=black, shape=isosceles triangle, inner sep=0pt, minimum size=4pt, shape border rotate=90]
\tikzstyle{bt-down}=[fill=black, draw=black, shape=isosceles triangle, inner sep=0pt, minimum size=4pt, shape border rotate=-90]
\tikzstyle{wt-up}=[fill=white, draw=black, shape=isosceles triangle, shape border rotate=90, inner sep=0pt, minimum size=4pt]
\tikzstyle{wt-down}=[fill=white, draw=black, shape=isosceles triangle, shape border rotate=-90]
\tikzstyle{box}=[fill=white, draw=black, shape=rectangle, minimum width=0.5cm, minimum height=0.3cm]
\tikzstyle{widebox}=[fill=white, draw=black, shape=rectangle, minimum width=1.0cm, minimum height=0.3cm]
\tikzstyle{vwidebox}=[fill=white, draw=black, shape=rectangle, minimum width=1.5cm, minimum height=0.3cm]
\tikzstyle{wdiam}=[fill=white, draw=black, shape=diamond]
\tikzstyle{oc}=[fill={rgb,255: red,255; green,128; blue,0}, draw=none, shape=circle, inner sep=0pt, minimum size=4pt]

\tikzstyle{red}=[-, draw=red]
\tikzstyle{blue}=[-, draw=blue]
\tikzstyle{green}=[-, draw=green]
\tikzstyle{green}=[-, draw=green]
\tikzstyle{dash}=[-, dashed]
\tikzstyle{dot}=[-, dotted]